\newtheorem{Satz}{Satz}[section]
\newtheorem{Thm}[Satz]{Theorem}
\newtheorem{Lem}[Satz]{Lemma}
\newtheorem{Corol}[Satz]{Corollary}
\theoremstyle{remark}
\theoremstyle{definition} 
\newtheorem{Def}[Satz]{Definition}
\newtheorem{Rem}[Satz]{Remark}
\newtheorem{Prop}[Satz]{Proposition}
\newcommand{\wt}{\ensuremath{\widetilde}}
\newcommand{\hra}{\ensuremath{\hookrightarrow}}
\newcommand{\vr}{\ensuremath{\varrho}}
\newcommand{\ol}{\ensuremath{\overline}}
\newcommand{\ve}{\ensuremath{\varepsilon}}
\newcommand{\Lci}{\ensuremath{\overset{\circ}{L}}{}^r_{p}(\real^n)}
\newcommand{\Lr}{\ensuremath{L^r_{p}(\real^n)}}
\newcommand{\Hrow}{\ensuremath{H^{\vr} L_{p} (w,\rn)}}
\newcommand{\Lrp}{\ensuremath{{ L}^{r}_{p}(\rn)}}
\newcommand{\Lmwe}{\ensuremath{{ L}^{r}_{p}(w,\rn)}}
\newcommand{\eLmwe}{\ensuremath{{ L}^{r}_{p}(w,\real)}}
\newcommand{\Hrdw}{\ensuremath{H^{\vr} L_{p'} (w,\rn)}}
\newcommand{\eHrdw}{\ensuremath{H^{\vr} L_{p'} (w,\real)}}
\newcommand{\Lciw}{\ensuremath{\overset{\circ}{L}}{}^r_{p}(w,\real^n)}
\newcommand{\eLciw}{\ensuremath{\overset{\circ}{L}}{}^r_{p}(w,\real)}
\newcommand{\LmwztN}{\ensuremath{{ L}^{{r p_0}}_{\frac{p}{p_0}}(w,\rn)}}
\newcommand{\LmwztNP}{\ensuremath{H^{\vr} L_{\left(\frac{p}{p_0}\right)'} (w,\rn)}}
\newcommand{\Lpw}{\ensuremath{L_{p,w} (\rn)}}
\newcommand{\Lpzw}{\ensuremath{L_{p_0,w} (\rn)}}
\newcommand{\Lpzwt}{\ensuremath{L_{p_1,w} (\rn)}}
\newcommand{\di}{\ensuremath{{\mathrm d}}}
\newcommand{\zn}{\ensuremath{{\mathbb Z}^n}}
\newcommand{\rn}{\ensuremath{{\mathbb R}^n}}
\newcommand{\real}{\mathbb{R}}
\newcommand{\rat}{\mathbb{Q}}
\newcommand{\nat}{\mathbb{N}}
\newcommand{\ganz}{\mathbb{Z}}
\newcommand{\eq}{equation}
\newcommand{\supp}{\ensuremath{\mathrm{supp \,}}}
\newcommand{\norm}[2]{\left\Vert\left. #1 \right| #2 \right\Vert}
\begin{document}
\title[Boundedness results in Morrey spaces via extrapolation and duality]{\textbf{The boundedness of operators in\\Muckenhoupt weighted Morrey spaces\\via extrapolation techniques and duality}} 
%The boundedness of operators in Muckenhoupt weighted Morrey spaces and its preduals via extrapolation techniques}} 
%Extrapolation from $A_p$ and $A_\infty$ weights on Morrey-Marcinkiewcz spaces, duality and applications}} 

% \date{September 15, 2010}

\author{
Marcel Rosenthal \& Hans-J{\"u}rgen Schmeisser} %\ss{}er\\
\address{Universidad del Pa\'{i}s Vasco/Euskal Herriko Unibertsitatea, Departamento de Matemáticas/Matematika saila\\
Apdo. 644, 48080 BILBAO, Spain}
\email{marcel.rosenthal@uni-jena.de}
\address{Universit\"at Jena, Mathematisches Institut\\
Ernst-Abbe-Platz 2, 07743 Jena, Germany}
\email{hans-juergen.schmeisser@uni-jena.de} %$\cdot$hans-juergen.schmeisser@uni-jena.de javier.duoandikoetxea@ehu.eus $\cdot$ mhj@uni-jena.de}
%\email{javier.duoandikoetxea@ehu.eus}
%\email{mhj@uni-jena.de}

\thanks{The first author is supported by the German Academic Exchange Service (DAAD)}

\subjclass[2010]{Primary 42B35, 46E30, 42B15, 42B20; Secondary 42B25}

\date{July 15, 2015.} % and, in revised form, June 22, 2001.}

%\maketitle
\keywords{Morrey spaces, predual Morrey spaces, extrapolation, Muckenhoupt weights, singular integral operators, Calder\'{o}n-Zygmund operators, H\"ormander-Mikhlin type multipliers, Marcinkiewicz multipliers, maximal Carleson operator, commutators, associated Morrey spaces} %, 

\begin{abstract}
The bidual of the closure of smooth functions with respect to the Morrey norm coincides with the Morrey space. This assertion is generalized to some Muckenhoupt weighted Morrey spaces. 
We combine this fact with basic extrapolation techniques due to Rubio de Francia adapted to weighted Morrey spaces. This leads to new results on the boundedness of operators even for the unweighted case.%unweighted Morrey spaces.
\end{abstract}

\maketitle

\section{Introduction}
An important tool of modern harmonic analysis is the extrapolation theorem due to Rubio de Francia. 
For a given operator $T$ we suppose that for some $p_0$, $1 \le p_0 < \infty$, and
for every weight belonging to the Muckenhoupt class $A_{p_0}$ the inequality
\begin{\eq} \label{i1}
  \norm{Tf}{\Lpzw}= \left(\int_{\rn} |Tf(x)|^{p_0} w(x) \di x \right)^\frac{1}{p_0} \le c \norm{f}{\Lpzw}
\end{\eq}
holds, where the constant $c$ is independent of $f$ but can depend on $[w]_{A_{p_0}}$.
Then for every $p$, $1 < p < \infty$, and every $w \in A_{p}$ there exists a constant depending
on $[w]_{A_{p}}$ such that
\begin{\eq} \label{i2}
  \norm{Tf}{\Lpw}\le c \norm{f}{\Lpw}
\end{\eq}
(cf. \cite{Ru1, Ru2, CMP11, D13}).
In this paper we consider Muckenhoupt weighted Morrey spaces $\Lmwe$ collecting all locally integrable complex-valued functions given on $\rn$ with 
\begin{align*}
  		\left\|f|\Lmwe\right\|&= 
%\sup_{J\in\ganz, M\in\ganz^n} w(Q_{J,M})^{-\left(\frac{1}{p}+\frac{r}{n} \right)} \left\|f|L_{p, w}(Q_{J,M})\right\|\\&= 
\sup_{J\in\ganz, M\in\ganz^n} w(Q_{J,M})^{-\left(\frac{1}{p}+\frac{r}{n} \right)} \left(\int_{Q_{J,M}} |f(x)|^p w(x) \di x \right)^\frac{1}{p}
<\infty
\end{align*}
where $Q_{J,M}=2^{-J}\left(M+\left[-1,1\right]^n\right)$,  $1 < p < \infty$ and $-\frac{n}{p} \le r < 0$. The spaces $\Lmwe$ coincide with the unweighted Morrey spaces $\Lr$ if $w(x)= 1$ %coincides with the Lebesgue measure 
and furthermore with $\Lpw$ if $r=-\frac{n}{p}$.
It is our aim to show that \eqref{i1} also implies
%We show that \eqref{i1} furthermore implies 
that for every $p$, $1 < p < \infty$, every $r$, $-\frac{n}{p} \le r < 0$, and every $w \in A_{p}$ there exists a constant depending
on $[w]_{A_{p}}$ such that
\begin{\eq} \label{i3}
  \norm{Tf}{\Lmwe}\le c \norm{f}{\Lmwe} %\quad \text{ for all } f  \text{ compactly supported}.
\end{\eq}
for $D(\rn)$. %(even for $f\in \Lmwe$ whenever $T$ is well-defined).
%The only additional assumption 
If we assume in addition that $T$ admits an unique and continuous extension from $D(\rn)$ on $\Lciw$ where $\Lciw$ stands for the completion of $D(\rn)$ with respect to $\left\|\cdot|\Lmwe\right\|$, then we achieve % yields therefore by reason of \eqref{i3}
\begin{\eq} \label{i4}
  T: \Lciw \hra \Lmwe.
\end{\eq}
%from \eqref{i3}. 
Note that linearity of $T$ is sufficient for the existence of this unique and continuous extension but also other operators as the maximal operator or maximally truncated singular integral operators are admissible (cf. \eqref{ToY} for another sufficient condition on $T$).
This is a new method to prove \eqref{i4} even in the unweighted Morrey spaces $\Lr$. 
In many previous papers inequalities of type \eqref{i4} are deduced assuming the estimate
%following representation of $T$
\begin{\eq} \label{i5}
  |(Tf)(y)|\leq c \int_{\real^n} \frac{|f(x)|}{|y-x|^n} \di x \quad \text{ for all } f\in D(\rn) \text{ and } y\notin \supp(f)
\end{\eq}
and using the (weighted) $L_{p_0}(\rn)$ boundedness of $T$ 
(cf. \cite{N94,Y98, G11, Gul12, Mu12, RS14, PT15, Wan16} and the references given there or similar size estimates of the operator cf. \cite{Alv96, SFZ13}).
Sharpening \eqref{i4} to 
\begin{\eq} \label{i6}
  T: \Lciw \hra \Lciw
\end{\eq}
we find extensions of $T$ via (bi-)duality on $\Lmwe$ such that
\begin{\eq} \label{i7}
  T: \Lmwe \hra \Lmwe.
\end{\eq}
This is based on the duality result
%building up before the duality assertion
\begin{\eq} \label{i8}
  (\Lciw')' \cong\Lmwe
\end{\eq}
which extends the corresponding unweighted duality assertion observed in \cite{AX12} and proved completely in \cite{RoT14_2}.
Our method proving \eqref{i4} and \eqref{i7} using \eqref{i1} instead of \eqref{i5} leads to new results on the boundednesses of operators even in the unweighted Morrey spaces $\Lr$. 
As examples H\"ormander-Mikhlin type multipliers, Marcinkiewicz multipliers and commutators will be considered.
Let us mention that in many related papers (starting from Peetre \cite{Pee66} and many following scholars) dealing with various generalizations of $\Lr$ 
%one does not bother in a satisfactory manner with 
the non-density of $D(\rn)$ in $\Lr$ is not taken into account. Then the use of \eqref{i5} for all $f\in \Lr$ (instead of $f\in D(\rn)$) has to be justified, in particular, for singular integrals and multipliers. Moreover, one has also to clarify in which way one extends the operator $T$ given on some $L_{p_0}(\rn)$ space (or given on $D(\rn)$) to $\Lr$ (whenever necessary cf. singular integrals and multipliers). %, which is necessary and usually done by continuous extension particularly for singular integrals.
For a detailed discussion we refer to the forerunner results \cite{Alv96, AX12, RT13, Ros13, RoT14_2, T-HS, RS14, Ad15}.
Based on \eqref{i6} and \eqref{i7} we are able to complete previous results for singular integrals (Calder\'{o}n-Zygmund operators), multipliers, commutators, \ldots with respect to \eqref{i4}. 
In contrast to \cite{Alv96, AX12, Ad15} we are working on $\Lciw$ (instead on $\Lciw'$). \cite{AX12, Ad15} use also \eqref{i1} to obtain partial results in the unweighted situation $\Lrp$.
By the fact that there are also negative results with respect to interpolation of Morrey spaces (cf. \cite{BRV99}) it was not clear that extrapolation will work.
% (cf. \cite{KS09,Mu12,KGS14}).
%For the weighted Morrey spaces particularly the results deduced for singular integrals (Calder\'{o}n-Zygmund operators) (cf. \cite{KS09,Mu12,KGS14}) in the sense of \eqref{i4} are lifted to results in the sense of \eqref{i6} and \eqref{i7}.
\par
After introducing the notation and some preliminaries in Section 2, %we prove %eqref{i3} in Section 3 generalizing ideas of \cite{D13}. 
Section 3 is concerned with preparations which are needed to prove the duality result \eqref{i8} in Section 4.
%Duality with respect to weighted Morrey spaces, \eqref{i8}, is discussed in Section 5 and generalizes particularly ideas of \cite{RoT14_2} where Section 4 is a preparation to build up \eqref{i8}. 
Here we investigate basic embeddings, density and separability of weighted (pre-)dual Morrey spaces. 
Using duality in Morrey spaces, \eqref{i8}, we prove \eqref{i3} in Section 5 generalizing ideas of \cite[Theorem 4.6]{CMP11} and \cite{CGCMP06}. 
In Section 6 we present the main result of the paper %(cf. Theorem \ref{vB:thm}) 
which states that \eqref{i1} and the existence of an unique and continuous extension of $T$ from $D(\rn)$ to $\Lciw$ imply \eqref{i6} and \eqref{i7} (cf. Theorem \ref{vB:thm}). Moreover, if $T$ is formally self-adjoint in $\Lpzw$, then we have also
\[
  T: \Lciw' \hra \Lciw'
\]
where $\Lciw'$ admits an atomic characterization (cf. Definition \ref{D2.3} and Theorem \ref{TDp2:GG}). We shall apply our general results to distinguished operators (Calder\'{o}n-Zygmund operators, H\"ormander-Mikhlin type multipliers, Marcinkiewicz multipliers, maximal Carleson operator, commutators). Finally, we explain how one can lift these results to the vector-valued case. In the last section we characterize the associated spaces of the Morrey spaces.
\section{Notation, Morrey spaces and preliminaries}
We use standard notation. Let $\nat$ be the collection of all natural numbers and %$\nat_0 = \nat \cup \{0 \}$. 
%Let 
$\rn$ be the Euclidean $n$-space, where
$n\in \nat$. Put $\real = \real^1$. 
Let $S(\rn)$ be the Schwartz space of all complex-valued rapidly decreasing infinitely differentiable functions on $\rn$. %and let $S' (\rn)$ be the space of all tempered distributions on $\rn$.
Let $D(\rn)$ be the collection of all infinitely differentiable functions %$f\in S(\rn)$ 
with compact support in $\rn$, where the support of a function $f$ is abbreviated by $\supp(f)$.
Furthermore, $L_{p,w} (\rn)$ with $1 \le p <\infty$ is the complex Banach space of functions whose $p$-th power is integrable with respect to the weight $w:\rn\rightarrow [0,\infty]$ and which is %complex Banach space with respect to the Lebesgue measure,
%quasi-
normed by
\[
\| f \, | L_{p,w} (\rn) \| = \Big( \int_{\rn} |f(x)|^p \,  w(x) \di x \Big)^{1/p}.
\]
Moreover, we write $w(M)=\int_{M} w(x) \di x$ for the measurable subset $M$ of $\rn$. We similarly define $L_{p,w} (M)$.
If $w(x)=1$, we simply write $L_{p}(M)$, $\| \cdot \, | L_{p} (M) \|$ and $|M|$. %we similarly define $L_p (M)$.
Furthermore, %$|M|$ stands for the Lebesgue measure of $M$ and 
$\chi_M$ denotes the characteristic function of $M$. As usual, $\ganz$ is the collection of all integers; and $\zn$ where $n\in \nat$ denotes the lattice of all points $m= (m_1, \ldots, m_n) \in \rn$ with $m_j \in \ganz$. 
%Moreover, $L_p^{\mathrm{loc}}(\rn)$ collects all equivalence classes of almost everywhere coinciding measurable complex locally $p$-integrable functions, hence $f\in L_p (M)$ for any bounded measurable set $M$ of $\rn$. 
Moreover, $L_1^{\mathrm{loc}}(\rn)$ collects all equivalence classes of almost everywhere coinciding measurable complex locally integrable functions, hence $f\in L_1 (M)$ for any bounded measurable set $M$ of $\rn$. 
If $Q$ denotes a cube in $\rn$ (whose sides are parallel to the coordinate axes), then $dQ$ stands for the concentric cube with side-length $d>1$ times of the side-length of $Q$.
For any $p\in (1,\infty)$ we denote by $p'$ the conjugate index, namely, $1/p+1/{p'}= 1$.
For Banach spaces $X$ and $Y$ we denote by %$T:X\rightarrow Y$ 
\[T:X\hra Y\]
a bounded operator mapping $X$ into $Y$.
That is, we have
\[
  \left\|Tx|Y\right\|\le c \left\|x|X\right\| %\qquad \text{for all} \qquad x\in X
\]
for all $x\in X$
where the constant $c$ is independent of $x$.
%Let $D(\rn)\hra X$ where $D(\rn)$ is the collection of all infinitely differentiable functions %$f\in S(\rn)$ 
%with compact support in $\rn$, where the support of a function $f$ is abbreviated by $\supp(f)$.
%A bounded operator $\widetilde{T}$ acting in $X$, hence $\widetilde{T}:X\hra X$, is called an extension of $T$ to $X$ if it coincides on $D(\rn)$ with $T$. 
%We denote the Fourier transform of $f$ on $S(\rn)$ %or $S'(\rn)$ 
%by $\hat{f}$ and its inverse by $\check{f}$ where the normalisation of $\hat{f}$ does not matter for our estimates. 
The concrete value of constants may vary from one
formula to the next, but remains the same within one chain of (in)equalities. 
Finally, $A \cong B$ means that there are two constants $c$, $C > 0$ such that $c A \le B \le C A$.
\begin{Def} 
We say that %a %non-negative on $\real$ 
%locally integrable function on $\real^n$ 
$w\in L_1^\text{loc}(\rn)$ with $w>0$ almost everywhere belongs to $A_p$ for $1<p<\infty$ if
\begin{\eq} \label{Ap}
  [w]_{A_p}\equiv \sup_Q \frac{w(Q)}{|Q|} \left( \frac{w^{1-p'}(Q)}{|Q|} \right)^{p-1}<\infty,
\end{\eq}
where the supremum is taken over all cubes $Q$ in $\real^n$ (whose sides are parallel to the coordinate axes). 
%and 
%\[
  %w^{1-p'}(t)\equiv 
	%\begin{cases}
	  %\frac{1}{w^{p'-1}(t)}, & w(t)>0  \\
		%\infty, & w(t)=0
	%\end{cases}
%\]
The value $[w]_{A_p}$ is called the $A_p$ constant of the \textit{Muckenhoupt weight} $w$.
%of the supremum is the $A_p$ constant of the \textit{Muckenhoupt weight} $w$, and will be denoted by 
\end{Def}
We define some Muckenhoupt weighted Morrey spaces.
%\subsection%{Introduction to weighted Morrey spaces} %, duals and preduals}
\begin{Def}{} \label{d1:def}
\upshape 
	For $1< p< \infty$, $-\frac{n}{p}\leq r<0$ and $w\in A_p$ we define %\textit{Morrey spaces} as  %Marcinkiewcz
	\begin{equation*}
				%{ L}^{r,\mu, {\cal Q}}_{p, w}(\real^n)
				\Lmwe\equiv\{f \in L_1^{\text{loc}}(\real^n) \mbox{ : } \left\|f|\Lmwe\right\|<\infty\}
	\end{equation*}
	with the norm  
	\begin{align*} %\label{2:GG}
		\left\|f|\Lmwe\right\|\equiv & 
\sup_{J\in\ganz, M\in\ganz^n} w(Q_{J,M})^{-\left(\frac{1}{p}+\frac{r}{n} \right)} \left\|f|L_{p, w}(Q_{J,M})\right\|.
	\end{align*}	
%We repeat %$Q_{JM}\equiv 
Recall that $Q_{J,M}=2^{-J}\left(M+\left[-1,1\right]^n\right)$, $J\in\ganz$, $M\in\ganz^n$ is a dyadic cube with side length $2^{-J+1}$ centered at $2^{-J}M$.
\end{Def}
\begin{Rem}
We observe that ${L}^{-{n}/{p}}_p(w,\rn)=L_{p, w}(\rn)$. If $w(x) \di x = \di x$, $\Lmwe$ coincides with the unweighted Morrey spaces $\Lrp$.
%Moreover, $\Lmwe$ coincides with the unweighted Morrey spaces $\Lrp$ whenever $w(x) \di x = \di x$.
%Moreover, $\Lmwe=\Lrp$ for $w(x) \di x = \di x$. 
Furthermore, we mention that the a priori assumption $f \in L_1^{\text{loc}}(\real^n)$ can be omitted.  
Indeed, if $f$ is a measurable complex function defined on $\rn$, then $\left\|f|\Lmwe\right\|<\infty$, H\"older's inequality and \eqref{Ap} yield
%$f \in L_1^{\text{loc}}(\real^n)$ is not an addition condition that an element is in $\Lmwe$, but it rather summarizes that an element of $\Lmwe$ is an equivalence class of almost everywhere coinciding measurable complex functions since $\left\|f|\Lmwe\right\|<\infty$ implies $f \in L_1^{\text{loc}}(\real^n)$. Indeed, H\"older's inequality and \eqref{Ap} yield
\begin{align*}
  \int_{Q_{J,M}} |f(x)|\di x 
	&\le \left\|f|L_{p, w}(Q_{J,M})\right\|  {w^{1-p'}(Q_{J,M})}^{\frac{1}{p'}}
	\\&\le [w]_{A_p}^{\frac{1}{p}} \left\|f|L_{p, w}(Q_{J,M})\right\| \frac{|Q_{J,M}|}{w(Q_{J,M})^\frac{1}{p}} 
	\\&\le [w]_{A_p}^{\frac{1}{p}} \left\|f|\Lmwe\right\| w(Q_{J,M})^\frac{r}{n} <\infty
\end{align*}
for all $J\in\ganz$, $M\in\ganz^n$. Hence, $f \in L_1^{\text{loc}}(\real^n)$.
\end{Rem}
\begin{Rem}
Let $1<p<\infty$ and $w\in A_p$. 
Then for a cube $Q$ in $\rn$ (whose sides are parallel to the coordinate axes) and a measurable subset $S\subset Q$ we have
\begin{equation} \label{DC} 
  \frac{w(Q)}{w(S)} \le c \left(\frac{|Q|}{|S|}\right)^p
\end{equation} 
where the constant $c$ does not depend on $Q$ and $S$ cf. \cite[(7.3)]{D01}.
Moreover, $w$ satisfies the reverse doubling condition, i.e. there exists a constant $0<c<1$ such that
\begin{equation} \label{RDC} 
  w(Q)\le c\, w(2 Q)
\end{equation} 
holds for arbitrary cubes $Q$ in $\rn$ (whose sides are parallel to the coordinate axes) where the constant $c$ does not depend on the center and side-length of the cubes $Q$ cf. \cite[Lemma 7.5]{D01}. 
\end{Rem}
\begin{Lem}  \label{p4.2}
Let $1< p< \infty$, $-\frac{n}{p}\leq r<0$ and $w\in A_{p}$. %$w\in L_1^\text{loc}(\rn)$ satisfying $w>0$ almost everywhere. % and $w\in A_{p}$.
Then 
\begin{align}
  \norm{f}{\Lmwe} %\cong &  \sup_{x\in\rn, R>0} w(B_R(x))^{-\left(\frac{1}{p}+\frac{r}{n} \right)} \left\|f|L_{p, w}(B_R(x))\right\| \\
	\cong & \sup_{J\in\ganz, M\in\ganz^n} w(\wt{Q_{J,M}})^{-\left(\frac{1}{p}+\frac{r}{n} \right)} \left\|f|L_{p, w}(\wt{Q_{J,M}})\right\| \label{Meq1}
	\\ \cong &  \sup_{x\in\rat^n, J\in\ganz} w(Q(x,J))^{-\left(\frac{1}{p}+\frac{r}{n} \right)} \left\|f|L_{p, w}(Q(x,J))\right\| \label{Meq2}
\end{align}
for all $f\in L_1^\text{loc}(\rn)$ where $\wt{Q_{J,M}}=2^{-J}\left(M+\left[0,1\right]^n\right)$ and $Q(x,J)\equiv x+ 2^{-J}\left[-1,1\right]^n$.
\end{Lem}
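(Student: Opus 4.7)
The plan is to prove the two equivalences \eqref{Meq1} and \eqref{Meq2} by exploiting the geometric relations between the three families of cubes together with the doubling property of Muckenhoupt weights. Throughout, the key observation is that the exponent $-(\tfrac1p+\tfrac rn)$ lies in $[-\tfrac1p,0]$, so comparability $w(Q)\cong w(Q')$ for cubes $Q,Q'$ of comparable size (with comparability constants depending only on $n,p,[w]_{A_p}$) immediately yields comparability of $w(Q)^{-1/p-r/n}$ and $w(Q')^{-1/p-r/n}$.

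For \eqref{Meq1}, I would first note the elementary geometric facts $\wt{Q_{J,M}}\subset Q_{J,M}\subset 3\wt{Q_{J,M}}$ and the decomposition $Q_{J,M}=\bigcup_{M'\in M+\{-1,0\}^n}\wt{Q_{J,M'}}$ as a disjoint union (up to measure zero) of $2^n$ standard dyadic cubes of half side length. The inclusion $Q_{J,M}\subset 3\wt{Q_{J,M}}$ together with the doubling property of $A_p$-weights (iterating \eqref{RDC}) gives $w(Q_{J,M})\cong w(\wt{Q_{J,M}})\cong w(\wt{Q_{J,M'}})$ for each neighbour $M'$. Then both directions of \eqref{Meq1} follow: for ``$\lesssim$'' split the integral over $Q_{J,M}$ into the $2^n$ integrals over $\wt{Q_{J,M'}}$, and for ``$\gtrsim$'' use $\wt{Q_{J,M}}\subset Q_{J,M}$ directly.

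For \eqref{Meq2}, one direction is immediate because every dyadic $Q_{J,M}$ already equals $Q(2^{-J}M,J)$ with $2^{-J}M\in\rat^n$, so the left-hand side of \eqref{Meq2} is an obvious upper bound on $\|f|\Lmwe\|$. For the reverse direction I would fix an arbitrary cube $Q(x,J)$ with $x\in\rat^n$ and cover it by the (at most $3^n$) dyadic cubes $\wt{Q_{J,M_i}}$ of side length $2^{-J}$ that intersect it; these cubes are all contained in, say, $2Q(x,J)$, so by doubling $w(\wt{Q_{J,M_i}})\cong w(Q(x,J))$. Splitting the $L_{p,w}$-integral over $Q(x,J)$ into the finitely many integrals over $\wt{Q_{J,M_i}}$ and inserting the comparability of weights reduces the expression $w(Q(x,J))^{-1/p-r/n}\|f|L_{p,w}(Q(x,J))\|$ to the right-hand side of \eqref{Meq1}, which by the first step is already equivalent to $\|f|\Lmwe\|$.

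The only mild technical point — and hence the natural place to concentrate care — is keeping track of the sign of the exponent $-(\tfrac1p+\tfrac rn)$ when converting weight comparabilities into norm comparabilities; since this exponent is nonpositive, a pointwise doubling bound $w(Q)\le C w(Q')$ produces the \emph{opposite} inequality $w(Q)^{-1/p-r/n}\ge C^{-1/p-r/n}w(Q')^{-1/p-r/n}$, so one must invoke doubling in the right direction (i.e.\ bound the larger cube in terms of the smaller one in each step). Apart from this bookkeeping the argument is routine: \eqref{DC} and \eqref{RDC} together with the purely combinatorial fact that a cube of a given side length intersects only boundedly many dyadic cubes of comparable side length do all the work, and no properties of $f$ beyond membership in $L_1^{\mathrm{loc}}(\rn)$ are used.
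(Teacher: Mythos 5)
Your argument is correct and essentially the same as the paper's: both directions of \eqref{Meq1} come from splitting $Q_{J,M}$ into the $2^n$ cubes $\wt{Q_{J,M'}}$ it contains together with \eqref{DC}, and the nontrivial direction of \eqref{Meq2} from covering $Q(x,J)$ by boundedly many dyadic cubes of comparable size (the paper embeds $Q(x,J)$ in a single coarser dyadic cube $Q_{J-1,M}$ instead of covering it by $3^n$ finer ones, which is an immaterial difference). One small correction: the comparability $w(Q_{J,M})\le C\,w(\wt{Q_{J,M}})$ you need is a consequence of \eqref{DC} (with $S=\wt{Q_{J,M}}\subset Q_{J,M}$), not of iterating the \emph{reverse} doubling condition \eqref{RDC}, which yields an inequality in the opposite direction.
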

\begin{proof}
	We observe
	\begin{align*}
	  & w({Q_{J,M}})^{-\left(\frac{1}{p}+\frac{r}{n} \right) p} \left\|f|L_{p, w}({Q_{J,M}})\right\|^p 
		\\ \le & 2^{n} \sup_{\wt{M}\in\ganz^n:\ \wt{Q_{J,\wt{M}}} \subset Q_{J,M}}
		w(\wt{Q_{J,\wt{M}}})^{-\left(\frac{1}{p}+\frac{r}{n} \right)p} \left\|f|L_{p, w}(\wt{Q_{J,\wt{M}}})\right\|^p
	\end{align*}
		and hence
		\[
		  w({Q_{J,M}})^{-\left(\frac{1}{p}+\frac{r}{n} \right)} \left\|f|L_{p, w}({Q_{J,M}})\right\| \le c \sup_{J\in\ganz, M\in\ganz^n} w(\wt{Q_{J,M}})^{-\left(\frac{1}{p}+\frac{r}{n} \right)} \left\|f|L_{p, w}(\wt{Q_{J,M}})\right\|
		\]
		where the constant $c$ does not depend on $J\in\ganz$ and $M\in\ganz^n$.
	Furthermore, $\wt{Q_{J,M}}\subset Q_{J,M}$ yields
\begin{align*}
	  & w(\wt{Q_{J,M}})^{-\left(\frac{1}{p}+\frac{r}{n} \right)} \left\|f|L_{p, w}(\wt{Q_{J,M}})\right\|
		\\ \le &
		\left(\frac{w({Q_{J,M}})}{w(\wt{Q_{J,M}})}\right)^{\frac{1}{p}+\frac{r}{n} }
		w({Q_{J,M}})^{-\left(\frac{1}{p}+\frac{r}{n} \right)} \left\|f|L_{p, w}({Q_{J,M}})\right\|.
	\end{align*}	
Since 
\[
  \frac{w({Q_{J,M}})}{w(\wt{Q_{J,M}})}%^{\frac{1}{p}+\frac{r}{n} }
	\le c 
\]
where the constant $c$ does not depend on $J\in\ganz$ and $M\in\ganz^n$ (cf. \eqref{DC}), we obtain the equivalence \eqref{Meq1}.
Moreover, 
\[
  \norm{f}{\Lmwe} 
	\le  \sup_{x\in\rat^n, J\in\ganz} w(Q(x,J))^{-\left(\frac{1}{p}+\frac{r}{n} \right)} \left\|f|L_{p, w}(Q(x,J))\right\| 
\]
is obvious.
Now, we choose a cube $Q(x,J)$. We fix then $\wt{M}\in \ganz$ such that $\wt{M} 2^{-J+1} \le x_j < (\wt{M}+1) 2^{-J+1}$ where $x=(x_1, \ldots, x_n)$ and define
	\begin{\eq} \label{WFM1}
	  M_j \equiv \begin{cases} \wt{M}, & x_j-\wt{M} 2^{-J+1}\le 2^{-J} 
		\\ \wt{M}+1, & (\wt{M}+1) 2^{-J+1} -x_j < 2^{-J}  %\text{ else}
		\end{cases}
	\end{\eq}
	for $j=1,\ldots,n$. Therefore, $Q(x,J)\subset Q_{J-1,M}$ where $M = (M_1,\ldots, M_n)$ and
	\begin{align*} %\label{Wh1}
  & w(Q(x,J))^{-\left(\frac{1}{p}+\frac{r}{n} \right)} \left\|f|L_{p, w}(Q(x,J))\right\|
	\\ \le & \left(\frac{w({Q_{J-1,M}})}{w(Q(x,J))}\right)^{\frac{1}{p}+\frac{r}{n} }
	w(Q_{J-1,M})^{-\left(\frac{1}{p}+\frac{r}{n} \right)} \left\|f|L_{p, w}(Q_{J-1,M})\right\| .
	\end{align*}
	Using \eqref{DC} the equivalence \eqref{Meq2} follows.
\end{proof}
\begin{Rem}
By means of \eqref{DC} one can show analogously 
\begin{align}
  \norm{f}{\Lmwe} \cong &  \sup_{x\in\rn, R>0} w(B_R(x))^{-\left(\frac{1}{p}+\frac{r}{n} \right)} \left\|f|L_{p, w}(B_R(x))\right\| \label{Meq3}
	\end{align}
where $B_R(x)$ stands for the ball with radius $R$ centered at $x\in\rn$.	
\end{Rem}
%\begin{Def} 
%The \textit{Hardy-Littlewood maximal operator} $M$ is given by 
	%\begin{equation*} 
		%(M f)(y)\equiv\sup_{Q:y\in Q} \frac{1}{|Q|} \int_{Q}\left|f(z)\right|\di z, \qquad f \in L_1^\text{loc}(\rn) 
	%\end{equation*}
	%where the supremum is taken over all cubes $Q$ (whose sides are parallel to the coordinate axes) in $\rn$ which contain $y\in \rn$.
%\end{Def}
%
%
%The maximal operator is bounded in the weighted Morrey spaces as introduced in Definition \ref{d1:def} (cf. \cite[Thm. 3.2]{KS09}, \cite[Rem. 3]{Mu12}, \cite[Thm. 1.6.3]{Ros13} and \cite[Cor. 4.2]{KGS14}). 
%\begin{Prop} \label{Suf}
%Let $1< p< \infty$, $-\frac{n}{p}\leq r<0$, $w\in A_p$.
%Then $M$ is bounded in $\Lmwe$ and it holds
%\begin{equation} \label{WFM3}
  %\norm{Mf}{\Lmwe} \le c \left[w\right]^{\kappa}_{A_p} \norm{f}{\Lmwe}
%\end{equation} 
%where $\kappa\equiv \max\{ \frac{1}{p-1}+\frac{1}{p}+\frac{r}{n}, \frac{3}{p}+\frac{2r}{n}\}$.
%The constant $c$ in \eqref{WFM3} does not depend on $w$ and $f$.
%\end{Prop} 
%For the completeness of our presentation we give a proof of \eqref{WFM3} in the Appendix. Moreover, the constant $c$ given in \eqref{WFM3} does not depend on $w$.
%The maximal operator is bounded in the weighted Morrey spaces as introduced in Definition \ref{d1:def} (cf. \cite[Thm. 3.2]{KS09}, \cite[Rem. 3]{Mu12}, \cite[Thm. 1.6.3]{Ros13} and \cite[Cor. 4.2]{KGS14}).  
%For modifications of weighted Morrey spaces we refer to \cite{Sam14, Sam13, RSS, Saw05} and the references given there.
%
%
\section{Non-separability, density and embeddings of weighted Morrey spaces}
\begin{Prop} \label{p2.5}
Let $1< p\le \wt{p}< \infty$, $-\frac{n}{p}\leq -\frac{n}{\wt{p}}\leq r<0$ and let $w\in A_{p}$. %$w\in L_1^\text{loc}(\rn)$ satisfying $w>0$ almost everywhere. % and $w\in A_{p}$.
Then \[D(\rn) \hra S(\rn)\hra L_{u,w}(\real^n)\hra {{ L}^{r}_{\wt{p}}(w,\rn)} \hra \Lmwe\] where $u\equiv -\frac{n}{r}$. % and $0<w<\infty$ almost everywhere. %S(\real)\hra 
\end{Prop}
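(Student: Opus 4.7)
The plan is to verify the four inclusions in turn from left to right. The first, $D(\rn)\hra S(\rn)$, is the standard continuous embedding of test functions into the Schwartz space and requires no argument. The remaining three all reduce to H\"older's inequality applied cubewise, tuned so that the weight exponents line up with the Morrey normalization through the identity $u=-n/r$.

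For the last embedding $L^r_{\wt{p}}(w,\rn)\hra L^r_p(w,\rn)$, I would apply H\"older with exponent $\wt{p}/p\ge 1$ on each dyadic cube $Q_{J,M}$ to obtain
\[
  \|f\,|\,L_{p,w}(Q_{J,M})\| \le \|f\,|\,L_{\wt{p},w}(Q_{J,M})\|\, w(Q_{J,M})^{1/p-1/\wt{p}}.
\]
Multiplying by $w(Q_{J,M})^{-(1/p+r/n)}$ collapses the exponent to $-(1/\wt{p}+r/n)$, so taking the supremum over $J\in\ganz$ and $M\in\zn$ finishes this step. For $L_{u,w}(\rn)\hra L^r_{\wt{p}}(w,\rn)$ I would argue analogously with exponent $u/\wt{p}\ge 1$; this is admissible precisely because the assumption $-n/\wt{p}\le r$ reads $\wt{p}\le -n/r = u$. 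The resulting cubewise estimate is $\|f\,|\,L_{\wt{p},w}(Q)\|\le \|f\,|\,L_{u,w}(Q)\|\,w(Q)^{1/\wt{p}-1/u}$, and the key cancellation
\[
  -(1/\wt{p}+r/n) + (1/\wt{p}-1/u) = -r/n - 1/u = 0
\]
comes directly from the choice $u=-n/r$. Hence $\|f\,|\,L^r_{\wt p}(w,\rn)\|\le \|f\,|\,L_{u,w}(\rn)\|$.

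The main technical step is $S(\rn)\hra L_{u,w}(\rn)$, where one must control $w$ at infinity against the rapid decay of a Schwartz function. I would extract from \eqref{DC} a polynomial growth bound by taking $Q=2^k Q_0$ and $S=Q_0$ for $Q_0=[-1,1]^n$, which yields $w(2^k Q_0)\le c\,2^{knp}w(Q_0)$. Decomposing $\rn$ into the dyadic annuli $2^k Q_0\setminus 2^{k-1}Q_0$ and using $|f(x)|^u\le C_N(1+|x|)^{-N}$ available for any $N$ when $f\in S(\rn)$, I would estimate
\[
  \int_{\rn}|f(x)|^u w(x)\,\di x \le C_N\, w(Q_0)\sum_{k\ge 0} 2^{k(np-N)},
\]
which converges for $N>np$. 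Since the right-hand side is controlled by a Schwartz seminorm of $f$, the inclusion is continuous.

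This middle step is the only real obstacle: it is where one must quantify how fast an $A_p$-weight can grow at infinity, using the doubling exponent encoded in \eqref{DC}. The other three inclusions are either trivial or a one-line application of H\"older combined with the algebraic cancellation built into the identity $u=-n/r$.
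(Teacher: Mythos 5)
Your proof is correct and follows essentially the same route as the paper: the two Morrey embeddings are obtained by the identical cubewise H\"older estimate with the exponent cancellation coming from $u=-\frac{n}{r}$, the paper merely compressing the middle step by observing that $L_{u,w}(\rn)=L^{r}_{-n/r}(w,\rn)$ is itself a Morrey space so that one computation covers both. The only addition on your side is the explicit dyadic-annuli argument for $S(\rn)\hra L_{u,w}(\rn)$ via the polynomial growth bound \eqref{DC}; the paper treats the embedding of Schwartz (or test) functions into Muckenhoupt-weighted Lebesgue spaces as a known fact, and your argument for it is a correct way to supply that detail.
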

\begin{proof}
Let $f\in { L}^{r}_{\wt{p}}(w,\rn)$ and $Q\equiv Q_{J,M}$. %We observe $u\ge p$.
H\"older's inequality %and $\frac{1}{u} = -\frac{r}{n}$ 
yields
\begin{align*}
  w(Q)^{-\left(\frac{1}{p}+\frac{r}{n} \right)} \left\|f|L_{p, w}(Q)\right\|
	&\le w(Q)^{-\frac{1}{p}+\frac{r}{n} } \left\|f|L_{\wt{p}, w}(Q)\right\| w(Q)^{\left(1-\frac{p}{\wt{p}} \right)\frac{1}{p}} 
	\\&\le w(Q)^{-\frac{1}{\wt{p}}+\frac{r}{n} } \left\|f|L_{\wt{p}, w}(Q)\right\|
	\le \norm{f}{\Lmwe}
\end{align*}
and hence ${{ L}^{r}_{\wt{p}}(w,\rn)} \hra \Lmwe$. Thus, observing $L_{u,w}(\real^n)=L^r_{-n/r,w}(\real^n)$ and $1<\wt{p} \le -n/r$ we obtain the assertion.
%Let $f\in L_{u,w}(\rn)$ and $Q\equiv Q_{J,M}$. We observe $u\ge p$.
%H\"older's inequality and $\frac{1}{u} = -\frac{r}{n}$ yield
%\begin{align*}
  %w(Q)^{-\left(\frac{1}{p}+\frac{r}{n} \right)} \left\|f|L_{p, w}(Q)\right\|
	%&\le w(Q)^{-\frac{1}{p}+\frac{1}{u} } \left\|f|L_{u, w}(Q)\right\| w(Q)^{\left(1-\frac{p}{u} \right)\frac{1}{p}}
	%\\&\le \left\|f|L_{u, w}(Q)\right\|
%\end{align*}
%and hence the assertion.
\end{proof}
The following proposition is a generalization of \cite[Prop. 3.7]{RoT14_2} to weighted Morrey spaces. 
\begin{Prop}   \label{P3.7}
Let $1< p< \infty$, $-\frac{n}{p}< r<0$, $w\in A_{p}$ and let $u\equiv -\frac{n}{r}$.
Then the spaces $\Lmwe$ are non-separable. Furthermore, neither $D(\rn)$ nor $S(\rn)$ 
nor $L_{u,w} (\rn)$ nor $L_\infty^\text{comp}(\rn)$ is dense in $\Lmwe$ where $L_\infty^\text{comp}(\rn)$ denotes the collection of all compactly supported almost everywhere bounded functions. 
\end{Prop}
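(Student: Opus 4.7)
The plan is to construct an uncountable family $\{f_\alpha\}_{\alpha\in\{0,1\}^{\nat}}\subset\Lmwe$ that is uniformly norm-bounded while being pairwise $\|\cdot|\Lmwe\|$-separated by a fixed constant. This will simultaneously establish non-separability and, via a single local H\"older estimate, rule out density of all four candidate subspaces at once.

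First, I would fix a geometrically lacunary sequence of pairwise disjoint unit dyadic cubes $Q_k$ whose centers escape to infinity very quickly, say near $2^{k^2}e_1$, so that any cube containing two distinct $Q_k, Q_\ell$ with $k<\ell$ is forced to have side length comparable to $2^{\ell^2}$. For $\alpha=(\alpha_k)_k\in\{0,1\}^{\nat}$ set
\[
  f_\alpha \;=\; \sum_{k\ge 1} \alpha_k\, w(Q_k)^{r/n}\, \chi_{Q_k}.
\]
The weight factor $w(Q_k)^{r/n}$ is chosen precisely so that the Morrey norm of $f_\alpha$ measured on $Q_k$ equals exactly $\alpha_k$. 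The technical heart of the argument is proving $\|f_\alpha|\Lmwe\|\le C$ uniformly in $\alpha$. For a test cube $Q$ meeting at most one $Q_k$, this is direct, using $-r/n>0$ and monotonicity of $w$. For a test cube $Q$ meeting several $Q_{k_1},\ldots,Q_{k_m}$, the lacunary choice forces $|Q|$ to exceed every $|Q_{k_j}|$ by an enormous factor. Iterating the reverse doubling inequality \eqref{RDC} then yields $w(Q_{k_j})\le C\,(|Q_{k_j}|/|Q|)^\eta w(Q)$ for some $\eta>0$ depending only on $w$. With $\beta\equiv 1+rp/n\in(0,1)$ (recall $-n/p<r<0$), this gives
\[
  \int_Q |f_\alpha|^p w\,\mathrm{d}x \;\le\; \sum_j w(Q_{k_j})^\beta \;\le\; C\, w(Q)^\beta \sum_j (|Q_{k_j}|/|Q|)^{\eta\beta},
\]
where the last sum is dominated by a convergent geometric series thanks to the rapid lacunary growth.

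For non-separability, any $\alpha\ne\beta$ differ in some coordinate $k$; testing $f_\alpha-f_\beta$ against $Q_k$ gives the value $1$ by the choice of normalization, so the uncountable family is $1$-separated in $\Lmwe$. For non-density, I would exploit H\"older's inequality with exponent $u/p>1$ (equivalent to $r>-n/p$) to obtain the pointwise local estimate
\[
  w(Q)^{-(1/p+r/n)}\,\|g|L_{p,w}(Q)\| \;\le\; \|g|L_{u,w}(Q)\|
\]
valid for every cube $Q$ and every $g\in L_1^{\mathrm{loc}}(\rn)$. Fix an $\alpha$ with infinitely many $\alpha_k=1$. For any $g\in L_{u,w}(\rn)$, the right-hand side at $Q=Q_k$ tends to $0$ as $k\to\infty$ by disjointness of the cubes and absolute continuity of $\int|g|^u w$. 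Combined with the reverse triangle inequality on $Q_k$, this forces $\|f_\alpha-g|\Lmwe\|\ge\tfrac{1}{2}$ for all sufficiently large $k$ with $\alpha_k=1$, so $f_\alpha$ is at positive distance from $L_{u,w}(\rn)$. Since $D(\rn)\subset S(\rn)\subset L_\infty^{\mathrm{comp}}(\rn)\subset L_{u,w}(\rn)$ (the last inclusion holding because a compactly supported bounded function is trivially in $L_{u,w}$), this single computation handles all four density assertions.

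The principal obstacle is the second step: arranging the lacunary geometry of $\{Q_k\}$ and exploiting reverse doubling so that the sum $\sum_j(|Q_{k_j}|/|Q|)^{\eta\beta}$ is uniformly bounded across every test cube $Q$, including the delicate intermediate case where $Q$ is only moderately larger than a single $Q_k$. Once that uniform Morrey bound is in hand, everything else follows routinely from Proposition~\ref{p2.5} together with the doubling/reverse-doubling tools already collected in Section~2.
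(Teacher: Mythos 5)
Your argument is correct, and its first half is in essence the paper's own: the paper also builds an uncountable, uniformly bounded, uniformly separated family $f^\lambda=\sum_l \lambda_l\,w(Q_l)^{r/n}\chi_{Q_l}$ over a lacunary family of pairwise disjoint cubes and controls the critical test cubes by iterating the reverse doubling inequality \eqref{RDC}, exactly as you propose (compare \eqref{TFCH}). The differences are two. First, the geometry: the paper uses shrinking dyadic cubes $Q_l=2^{-l}(2_n+[0,1]^n)$ accumulating at the origin inside $[-1,1]^n$, with signs $\lambda_l=\pm1$, and reduces to dyadic test cubes via Lemma \ref{p4.2} so that the ``several cubes met'' case occurs only for $\wt{Q_{J,M}}=2^{2-l_0}[0,1]^n$; your unit cubes escaping to infinity are a mirror image of this and require the same reverse-doubling iteration, so nothing is gained or lost, though the paper's dyadic reduction disposes of your ``delicate intermediate case'' cleanly and you should carry out that reduction rather than leave it as an obstacle. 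Second, and more substantively, the density claims: the paper dispatches all four at once with the abstract observation that a separable space ($L_{u,w}(\rn)$, which contains the other three) cannot be continuously and densely embedded in a non-separable one, using Proposition \ref{p2.5}; you instead give a direct quantitative argument via the local H\"older estimate $w(Q)^{-(1/p+r/n)}\|g\,|L_{p,w}(Q)\|\le\|g\,|L_{u,w}(Q)\|$ and absolute continuity along the disjoint cubes $Q_k$. Your route costs a little more computation but buys a little more: it exhibits explicit elements of $\Lmwe$ at distance at least $\tfrac12$ from all of $L_{u,w}(\rn)$ and does not use separability of $L_{u,w}(\rn)$ at all, whereas the paper's argument is shorter but purely existential. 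Both are valid; only the uniform Morrey bound (including non-dyadic and partially overlapping test cubes) needs to be written out with the same care in either version.
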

\begin{proof}
{\em Step 1.} First we prove the non-separability. Let
\begin{\eq}   \label{3.24}
Q_{l} \equiv %Q_{l,2_n} = 
2^{-l} \left( (2,\ldots,2) + [0,1]^n\right), \qquad l \in \nat,\ l\ge 2,
\end{\eq}
be disjoint cubes with $Q_{l} \subset Q = [-1,1]^n$ and let $2_n\equiv(2,\ldots,2)$. % where $J_l \in \nat$, $J_1 < J_2 < \ldots$ and $M^l \in \zn$. 
Let
\begin{\eq}    \label{3.25}
f^\lambda \equiv \sum^\infty_{l=2} \lambda_{l} \, w(Q_{l})^{\frac{r}{n}} \, \chi_{l}
\end{\eq}
where $\chi_{l}$ is the characteristic function of $Q_{l}$ and
\begin{\eq}  \label{3.26}
\lambda = \{ \lambda_{l} \}^\infty_{l=2} \qquad \text{with either $\lambda_{l} =1$ or $\lambda_{l} = -1$}.
\end{\eq}
Let $J \in \ganz$ and $M\in \zn$. 
Taking into account the construction of $\{Q_l\}$ and $\wt{Q_{J,M}}=2^{-J}\left(M+\left[0,1\right]^n\right)$ we observe that there are just three cases: 
\begin{itemize}
\item $\wt{Q_{J,M}}$ does not intersect another cube of $\{Q_l\}$, 
\item one of the cubes of $\{Q_l\}$ has nonempty intersection with $\wt{Q_{J,M}}$ (either $Q_{l_0}\subset \wt{Q_{J,M}}$ or $\wt{Q_{J,M}}\subset Q_{l_0} $ for some ${l_0}\in\nat$, $l_0\ge 2$ and $\wt{Q_{J,M}} \cap Q_l = \emptyset$ for all $l\in\nat$ with $l\neq l_0$) 
\item or $\wt{Q_{J,M}}$ does intersect infinitely many cubes of the family $\{Q_l\}_l$, in that case we have $\left\{\bigcup_{l:l\ge l_0} Q_l\right\} \subset \wt{Q_{J,M}}$ and $\wt{Q_{J,M}}\cap \left\{\bigcup_{l:l< l_0} Q_l\right\} = \emptyset $ for an appropriate $l_0\in\nat_0$ with $l_0\ge 2$. %, where $\wt{Q_{J,M}}$ is defined as in Lemma \ref{p4.2}.
\end{itemize}
At first we treat the case $\wt{Q_{J,M}}\subset Q_{l_0} $ and $\wt{Q_{J,M}} \cap Q_l = \emptyset$ for $l\neq l_0$.
For $\wt{Q_{J,M}}\subset Q_{l_0} $ we observe then
\begin{\eq}   \label{3.27a}
  w({\wt{Q_{J,M}}})^{-\left(\frac{1}{p}+\frac{r}{n} \right)p} \int_{\wt{Q_{J,M}}} |f^\lambda (x)|^p w(x) \di x 
	= \left(\frac{w({\wt{Q_{J,M}}})}{w({Q_{l_0}})}\right)^{-\frac{r}{n}p} \le 1
\end{\eq} 
by $r<0$.
In the case $Q_{l_0}\subset \wt{Q_{J,M}}$ and $\wt{Q_{J,M}} \cap Q_l = \emptyset$ for $l\neq l_0$ we obtain
\begin{\eq}   \label{3.27b}
  w({\wt{Q_{J,M}}})^{-\left(\frac{1}{p}+\frac{r}{n} \right)p} \int_{\wt{Q_{J,M}}} |f^\lambda (x)|^p w(x) \di x 
	= \left(\frac{w({Q_{l_0}})}{w({\wt{Q_{J,M}}})}\right)^{\left(\frac{1}{p}+\frac{r}{n} \right)p} \le 1
\end{\eq}
by $\frac{1}{p}+\frac{r}{n}>0$. It remains the case that $\left\{\bigcup_{l:l\ge l_0} Q_l\right\} \subset \wt{Q_{J,M}}$ and $\wt{Q_{J,M}}\cap \left\{\bigcup_{l:l< l_0} Q_l\right\} = \emptyset $ where $l_0\in\nat_0$ with $l_0\ge 2$. 
Then $\wt{Q_{J,M}}= \wt{Q_{l_0-2,0_n}}=2^{2-l_0}[0,1]^n$ and
\begin{\eq}   \label{3.27}
w({\wt{Q_{J,M}}})^{-\left(\frac{1}{p}+\frac{r}{n} \right)p} \int_{\wt{Q_{J,M}}} |f^\lambda (x)|^p w(x) \di x 
= \sum_{l:l\ge l_0} \left(\frac{w({Q_{l}})}{w(\wt{Q_{l_0-2,0_n}})}\right)^{\left(\frac{1}{p}+\frac{r}{n} \right)p}
%\le \sum_{l: J_l \ge J} 2^{-(J_l -J)(\frac{n}{p} +r)p} + \sum_{l: J_l <J} 2^{(J-J_l)rp}
<\infty,
\end{\eq}
where the last inequality holds by the fact that $w$ satisfies the reverse doubling condition.
Indeed, the reverse doubling condition \eqref{RDC} yields 
\begin{\eq}   \label{Mine}
  w({Q_{l,2_n}})< c w({2Q_{l,2_n}})
\end{\eq} 
for a constant $c<1$ which does not depend on $l\in\nat$, where we recall $Q_{l,2_n}=2^{-l} \left( (2,\ldots,2) + [-1,1]^n\right)$.
Thus,
\begin{align}
\begin{split} \label{TFCH}
  & \sum_{l:l\ge l_0} \left(\frac{w({Q_{l}})}{w(\wt{Q_{l_0-2,0_n}})}\right)^{\left(\frac{1}{p}+\frac{r}{n} \right)p}
	\le \sum_{l:l\ge l_0} \left(\frac{w({Q_{l,2_n}})}{w(\wt{Q_{l_0-2,0_n}})}\right)^{\left(\frac{1}{p}+\frac{r}{n} \right)p}
	\\ \le & \sum_{l:l\ge l_0} \left[ c^{l- l_0} \left(\frac{w(2^{l- l_0}{Q_{l,2_n}})}{w(\wt{Q_{l_0-2,0_n}})}\right)\right]^{\left(\frac{1}{p}+\frac{r}{n} \right)p}
	\\ \le & \left(\frac{{w({Q_{l_0-2,0_n}})})}{w(\wt{Q_{l_0-2,0_n}})}\right)^{\left(\frac{1}{p}+\frac{r}{n} \right)p} \sum_{l:l\ge l_0} c^{(l- l_0)\left(\frac{1}{p}+\frac{r}{n} \right)p}, 
\end{split}
\end{align}
where the last inequality holds by  
\[
2^{l- l_0}{Q_{l,2_n}} = 2^{-l} 2_n + 2^{-l_0} [-1,1]^n \subset Q_{l_0-2,0_n}=2^{2-l_0}[-1,1]^n 
\]
and the right-hand side of \eqref{TFCH} is finite by \eqref{DC} and $c<1$.
%the doubling property of $w$ (ToDo: $A_\infty$-Bedingg. (7.3) Duo oder Cor. 7.6/ (7.10) Duo) and $c<1$.
Hence $f^\lambda \in \Lmwe$ by \eqref{3.27a}-\eqref{3.27} and Lemma \ref{p4.2}. If $\lambda^1\equiv\{\lambda^1_l\}_l$ and $\lambda^2\equiv\{\lambda^2_l\}_l$ are two different
admitted sequences, then one has $\lambda^1_{l_0} =1$ and $\lambda^2_{l_0} = -1$ for some $l_0 \in \nat$ with $l_0\ge 2$ and moreover by Lemma \ref{p4.2}
\begin{align} \begin{split} \label{3.28}
&\| f^{\lambda^1} - f^{\lambda^2} \, | \Lmwe \| 
\\ \ge & c\, w({Q_{l_0}})^{-\left(\frac{1}{p}+\frac{r}{n} \right)}
\Big( \int_{Q_{l_0}} \big|(f^{\lambda^1} - f^{\lambda^2})(x)\big|^p \, w(x) \di x \Big)^{1/p} =2c.
\end{split}\end{align}
But the set of all these admitted functions $f^\lambda$ is non-countable, having the cardinality of $\real$. Then it follows from \eqref{3.28} that $\Lmwe$ is not separable.
\\[0.1cm]
{\em Step 2.}
The separable Lebesgue space $L_{u,w} (\rn)$ is continuously embedded into the non-separable space $\Lmwe$. This shows that this
embedding cannot be dense. Moreover, $D(\rn)$, $S(\rn)$ and $L_\infty^\text{comp}(\rn)$ are subsets of $L_{u,w} (\rn)$ and hence not dense in $\Lmwe$.
\end{proof}
\begin{Def} \label{24:GG}
Let $1< p< \infty$, $-\frac{n}{p}\leq r<0$ and let $w\in A_p$. Then $\Lciw$ is the completion of $D(\rn)$ in $\Lmwe$.
\end{Def}
\begin{Def}  \label{D2.3}
Let $1<p<\infty$, $-n <\vr <-n/p$ and let $w\in A_{p'}$ with $p'=\frac{p}{p-1}$. 
Then the spaces %\textit{predual Morrey space} 
$\Hrow$ collect all $h\in S'(\rn)$ %L_{1}^\text{loc}(\rn)$ %S'(\rn)$ 
which can be represented as
\begin{align}   \label{2.9}
\begin{split}
&h= \sum_{J\in \ganz, M\in \zn} h_{J,M} \quad \text{in} \quad S'(\rn) \quad \text{with} \quad \supp h_{J,M} \subset {Q_{J,M}},
%\\ &\qquad \| a_{J,M} w^{-\frac{1}{p'}} \, | L_{p} (\rn) \|\le w(Q_{J,M})^{\frac{1}{p} + \frac{\vr}{n}}, %2^{-J(\frac{n}{p} + \vr)}, 
\end{split}
\end{align}
such that
\begin{\eq}    \label{2.10a}
\sum_{J\in \ganz, M \in \zn} w(Q_{J,M})^{-\left(\frac{1}{p} + \frac{\vr}{n}\right)} \| h_{J,M} w^{-\frac{1}{p'}} \, | L_{p} (\rn) \| <\infty.
\end{\eq}
Furthermore,
\begin{\eq}    \label{2.10b}
\| h \, | \Hrow \| \equiv \inf \sum_{J\in \ganz, M \in \zn} w(Q_{J,M})^{-\left(\frac{1}{p} + \frac{\vr}{n}\right)} \| h_{J,M} w^{-\frac{1}{p'}} \, | L_{p} (\rn) \|
\end{\eq}
where the infimum is taken over all representations \eqref{2.9}, \eqref{2.10a}.
\end{Def}
\begin{Prop}   \label{T3.1}
Let $1<p<\infty$, $-n <\vr <-n/p$, $w\in A_{p'}$ with $p'=\frac{p}{p-1}$ and let $u\equiv -\frac{n}{\vr}$.
Then $1<u<p$, $w^{1-u}\in A_u$ and
\begin{\eq}  \label{3.5}
\begin{aligned}
\Hrow \hra L_{u,w^{1-u}} (\rn) %\hra B^{\frac{n}{p} - \frac{n}{u}}_{p,p} (\rn).
\end{aligned}
\end{\eq}
Furthermore, $D(\rn)$ is dense in $\Hrow$.
\end{Prop}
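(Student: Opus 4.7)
The plan addresses the four conclusions in order, with the embedding as the central analytic step. The range $1<u<p$ is immediate: since $\vr\in(-n,-n/p)$ is negative, dividing $-n$ by $\vr$ reverses the inequalities and produces $1<u=-n/\vr<p$. For $w^{1-u}\in A_u$ I invoke the standard $A_p$-duality $\sigma\in A_q\Leftrightarrow\sigma^{1-q'}\in A_{q'}$, combined with the algebraic identity $(1-u)(1-u')=1$ (a consequence of $u+u'=uu'$). Applied to $\sigma=w^{1-u}$ with exponent $u$, this reduces $w^{1-u}\in A_u$ to $w\in A_{u'}$, which follows from $u<p\Rightarrow u'>p'$ and the monotonicity $A_{p'}\subset A_{u'}$.

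For the embedding, fix an atomic representation $h=\sum_{J,M}h_{J,M}$ of $h\in\Hrow$ and estimate each atom by H\"older's inequality with conjugate exponents $p/u$ and $p/(p-u)$. Writing the integrand as $(|h_{J,M}|^p w^{1-p})^{u/p}\cdot w^{(p-u)/p}$ and using $\|h_{J,M} w^{-1/p'}\,|\,L_p(\rn)\|^p=\int|h_{J,M}|^p w^{1-p}\,dx$, one obtains
\[
\Bigl(\int_{Q_{J,M}} |h_{J,M}|^u w^{1-u}\,dx\Bigr)^{1/u} \le w(Q_{J,M})^{(p-u)/(pu)}\,\|h_{J,M}w^{-1/p'}\,|\,L_p(\rn)\|.
\]
Since $\vr=-n/u$, a short calculation gives $(p-u)/(pu) = 1/u-1/p = -(1/p+\vr/n)$, exactly the weight appearing in $\|\cdot\,|\,\Hrow\|$. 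Summing in $(J,M)$, applying the triangle inequality in $L_{u,w^{1-u}}(\rn)$ and infimizing over representations yields $\|h\,|\,L_{u,w^{1-u}}(\rn)\|\le\|h\,|\,\Hrow\|$; as a byproduct the atomic series converges absolutely in $L_{u,w^{1-u}}(\rn)$, necessarily to the same limit as in $S'(\rn)$.

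For density of $D(\rn)$, given $\epsilon>0$ I select a representation whose atomic sum is within $\epsilon$ of $\|h\,|\,\Hrow\|$, truncate to a finite index set $F$ (the tail, as a partial atomic series, has $\Hrow$-norm bounded by the tail of an absolutely convergent scalar series), and then smooth each remaining atom. The crucial point is that $w\in A_{p'}\Leftrightarrow w^{1-p}\in A_p$, so $L_p(\rn,w^{1-p}dx)$ is a standard Muckenhoupt-weighted Lebesgue space in which $D(\rn)$ is dense (via mollification plus the weighted maximal inequality). For each $h_{J,M}$ with $(J,M)\in F$, produce $\psi_{J,M}\in D(\rn)$ supported in a sufficiently enlarged dyadic cube $Q_{J',M'}\supset Q_{J,M}$ of comparable side length (combine a smooth cutoff equal to $1$ on $Q_{J,M}$ with a $D(\rn)$-approximation in $L_p(w^{1-p})$) such that $\|(h_{J,M}-\psi_{J,M})w^{-1/p'}\,|\,L_p\|$ is arbitrarily small. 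Treating $h_{J,M}-\psi_{J,M}$ as a single atom over $Q_{J',M'}$ and using the doubling $w(Q_{J',M'})\cong w(Q_{J,M})$ from \eqref{DC}, the $\Hrow$-norm of each individual approximation error becomes small, and summing the finitely many $\psi_{J,M}$ yields the desired $D(\rn)$-function within $O(\epsilon)$ of $h$.

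The main obstacle is the density step, specifically the support bookkeeping: one cannot in general force $\psi_{J,M}$ to lie inside $Q_{J,M}$, so one must permit a slight enlargement of the atomic support and rely on the doubling property of $w$ to ensure that the single-atom bound $w(Q_{J',M'})^{-(1/p+\vr/n)}\cong w(Q_{J,M})^{-(1/p+\vr/n)}$ remains controlled by a constant.
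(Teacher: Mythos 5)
Your proof is correct and follows essentially the same route as the paper: the identical H\"older computation (with exponents $p/u$ and $p/(p-u)$, equivalently H\"older with respect to $w(x)\,\rmd x$) gives the atomwise bound $\|h_{J,M}\,|\,L_{u,w^{1-u}}(\rn)\|\le w(Q_{J,M})^{-(1/p+\vr/n)}\|h_{J,M}w^{-1/p'}\,|\,L_p(\rn)\|$ and hence the embedding, and density is obtained exactly as in the paper by truncating the atomic sum and approximating the finitely many remaining atoms in $L_{p,w^{1-p}}$ using $w^{1-p}\in A_p$. The only cosmetic difference is that the paper approximates each $h_{J,M}$ directly by elements of $D(Q_{J,M})$, so the support enlargement and doubling comparison you describe as the ``main obstacle'' are not actually needed (though your variant is also valid, since $2Q_{J,M}$ sits inside an admissible cube of comparable weight).
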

\begin{proof}
  We observe $1<u<p$. This implies $p'<u'$ and hence $w\in A_{u'}$ as well as $w^{1-u}\in A_u$.
	Let $h\in \Hrow$ be optimally represented according to \eqref{2.9}-\eqref{2.10b}. 
  H\"older's inequality (with respect to the measure $w(x) \di x$) yields 
	\begin{align}
	\begin{split} \label{3.5b}
	  \norm{\wt{h}}{L_{u,w^{1-u}} (\rn)} &\le \sum_{J\in \ganz, M \in \zn} \norm{h_{J,M}}{L_{u,w^{1-u}}(\rn)}
		\\ &\le \sum_{J\in \ganz, M \in \zn} \norm{h_{J,M}}{L_{p,w^{1-p}}(\rn)} w(Q_{J,M})^{-\left(\frac{1}{p} + \frac{\vr}{n}\right)} 
	\end{split}\end{align}
  and, thus, \eqref{3.5} where the first inequality holds by means of the absolute convergence of $\sum_{J,M} h_{J,M}$ in $L_{u,w^{1-u}} \hra S'(\rn)$ and $\wt{h}$ denotes the representative of $h\in S'(\rn)$.
\par
	It remains to prove that $D(\rn)$ is dense in $\Hrow$. Let $h$ be given by \eqref{2.9}, \eqref{2.10a} and 
let
\begin{\eq} \label{SY}
h^L = \sum_{|J|\le L, |M| \le L} h_{J,M}, \qquad L \in \nat.
\end{\eq}
Then
\[
\| h - h^L \, | \Hrow \| \to 0 \qquad \text{if} \quad L \to \infty.
\]
Taking into account that $w^{1-p}\in A_p$ any $h_{J,M}$ with $|J|\le L$, $|M| \le L$ can be approximated in $L_{p,w^{1-p}} (Q_{J,M})$ by functions belonging to $D(Q_{J,M})$. The sum of these functions
approximates $h^L$ and also $h$. Hence $D(\rn)$ is dense in $H^{\vr} L_p (\rn)$.
\end{proof}
\begin{Rem} \label{GF3}
%The notation of $\Hrow$ as a predual will be justified in the Theorem \ref{TDp1:GG}. 
  It follows from \eqref{3.5b} that assumption \eqref{2.10a} ensures unconditional convergence of \eqref{2.9} in $L_{u,w^{1-u}}(\rn)$, for $\vr u =-n$, and hence in $S'(\rn)$. 
	Moreover, we mention that if one would extend the parameter range of $\vr$ to $-n<\vr \le -n/p$, then the space $H^{-n/p} L_p (w,\rn)$ would coincide with $L_{p,\wt{w}}(\rn)$ where $\wt{w}\equiv w^{1-p}$ (cf. \cite[(2.23)-(2.25)]{RoT14_2} for the unweighted case).
	Indeed, for $h\in H^{-n/p} L_p (w,\rn)$ the triangular inequality implies
	\begin{\eq}   \label{2.21}
	\| h \, | L_{p,\wt{w}} (\rn) \| \le \| h \, | H^{-n/p} L_p (w,\rn) \|.
	\end{\eq}
We prove the converse.
Let $h\in L_{p,\wt{w}} (\rn)$.
 %It is sufficient to deal with $h\in L_p (\rn)$ such that $\supp h \subset \big\{ x\in \rn: \, x_j \ge 0 \big\}$. 
Let $Q^J =Q_{-J,0_n} = [-2^J,2^J]^n$, $J\in \nat$, be admitted cubes. In dependence on $h$ there is a monotonically increasing sequence of natural numbers $\{J_l\}^\infty_{l=0}$ such that
\[ %begin{\eq}   \label{2.22}
h = \sum^\infty_{l=0} h_l, \quad \supp h_l \subset {Q^{J_l}}, \quad \| h_l \, | L_{p,\wt{w}} (Q^{J_l}) \| \le 2^{-l} \|h \, | L_{p,\wt{w}} (\rn) \|.
\] %end{\eq}
For suitable $J_l$ one may choose $h_0 = h|Q^{J_0}$ and $h_l = h|Q^{J_l} \setminus Q^{J_{l-1}}$ if $l\in \nat$. It is an admitted decomposition in the sense of
\eqref{2.9}-\eqref{2.10b} (extended to $\vr = -n/p$). This proves $h \in H^{-n/p} L_p (w,\rn)$ and the converse of \eqref{2.21} (with an additional factor 2).
\end{Rem}
\section{Duals and preduals - the weighted case} 
\subsection{Predual spaces}
The duality with respect to unweighted Morrey spaces is discussed in the scalar case in detail with complete proofs in \cite{RoT14_2} and in the vector-valued case in \cite{RS14}. Here we give complete proofs for Muckenhoupt weighted Morrey spaces following their approach.
\begin{Thm} \label{TDp1:GG}
Let $1<p<\infty$, $-\frac{n}{p}< r<0$, $r+\vr=-n$ and let $w\in A_p$. % and $1<q<\infty$. 
Then the predual space of $\Lmwe$ is $\Hrdw$. 
Moreover,
\[g\in \left(\Hrdw\right)'\] if, and only if, $g$ can be uniquely represented as 
\begin{\eq} \label{87:GG}
  g(f)=\int_{\rn} \wt{g}(x) f(x)\di x
\end{\eq}
for all $f \in D(\rn)$ where $\wt{g} \in \Lmwe$ and
%for all $f \in \Hrdw_F^1$ where $\wt{g} \in \Lmwe$ and
\begin{\eq} \label{MWGH}
  %g \in \Lmwe \text{ and } 
	\left\|g\left|\left( \Hrdw \right)'\right.\right\|\cong \left\|\wt{g} |\Lmwe\right\|.
\end{\eq}
%Moreover, for $\wt{g}\in L_1^\text{loc}(\rn)$ it holds %if $\{g_j\}\in L_p^r(\ell_q,\rn)$, then
%\begin{\eq} \label{1:GHM}
  %\left\|\wt{g}|\Lmwe\right\|=\sup_{f}\left|\int_{\rn} \wt{g}(x) f(x)\di x\right| %, %\{f_j\}\in L_p^r(\ell_q,\rn),
%\end{\eq}
%where the supremum is taken over all $f \in D(\rn)$ with $\left\|f|\Hrdw\right\|\leq 1$. 
\end{Thm}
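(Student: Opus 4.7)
The plan is to split the equivalence into the two implications ``$\wt{g}\in\Lmwe\Rightarrow g\in(\Hrdw)'$'' and ``$g\in(\Hrdw)'\Rightarrow \exists\,\wt{g}\in\Lmwe$'', both driven by the arithmetic identity $\tfrac{1}{p}+\tfrac{r}{n}=-\bigl(\tfrac{1}{p'}+\tfrac{\vr}{n}\bigr)$ that is forced by the hypothesis $r+\vr=-n$.

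For the sufficiency direction I would first verify that every $\wt{g}\in\Lmwe$ induces via \eqref{87:GG} a bounded linear functional on $\Hrdw$. Taking an admissible atomic representation $h=\sum_{J,M}h_{J,M}$ in the sense of Definition \ref{D2.3} (applied with $p'$ in place of $p$, so that $w\in A_p$ is the correct assumption on $w$ and the defining norm involves $\|h_{J,M}w^{-1/p}\,|L_{p'}(\rn)\|$), I would write $\wt{g}\,h_{J,M}=(\wt{g}\,w^{1/p})(h_{J,M}\,w^{-1/p})$, apply H\"older's inequality on $Q_{J,M}$ using $w^{-p'/p}=w^{1-p'}$, bound $\norm{\wt{g}}{L_{p,w}(Q_{J,M})}\le w(Q_{J,M})^{1/p+r/n}\norm{\wt{g}}{\Lmwe}$, sum over $(J,M)$, invoke the compatibility identity so that the weight powers collapse, and pass to the infimum over representations. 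This gives $|g(h)|\le\norm{\wt{g}}{\Lmwe}\norm{h}{\Hrdw}$, and density of $D(\rn)$ in $\Hrdw$ (Proposition \ref{T3.1}) then extends $g$ to the whole predual.

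For the converse direction I would reconstruct $\wt{g}$ from $g$ by local weighted $L_{p'}$--duality. Fixing $Q=Q_{J,M}$ and testing against $h\in D(Q)$, the one-term atomic decomposition yields
\[
\norm{h}{\Hrdw}\le w(Q)^{-(1/p'+\vr/n)}\,\norm{h}{L_{p',w^{1-p'}}(Q)},
\]
so $h\mapsto g(h)$ is continuous on $D(Q)$ in the $L_{p',w^{1-p'}}(Q)$--norm. Since $w\in A_p$ is equivalent to $w^{1-p'}\in A_{p'}$, a standard mollification argument shows that $D(Q)$ is dense in $L_{p',w^{1-p'}}(Q)$, and the functional extends. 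Weighted $L_{p'}$--duality, with dual weight $(w^{1-p'})^{1-p}=w$, then produces a unique $\wt{g}_Q\in L_{p,w}(Q)$ satisfying $g(h)=\int_Q \wt{g}_Q(x)\,h(x)\,\di x$ for every $h\in D(Q)$ together with $\norm{\wt{g}_Q}{L_{p,w}(Q)}\le\|g\|\,w(Q)^{-(1/p'+\vr/n)}$. Uniqueness of the dual representative forces $\wt{g}_Q=\wt{g}_{Q'}$ a.e.\ on $Q\cap Q'$, so the local pieces glue to a single $\wt{g}\in L_1^{\text{loc}}(\rn)$. Substituting $r+\vr=-n$ into the local bound gives $w(Q)^{-(1/p+r/n)}\norm{\wt{g}}{L_{p,w}(Q)}\le\|g\|$, which by Lemma \ref{p4.2} upgrades to $\norm{\wt{g}}{\Lmwe}\lesssim\|g\|$.

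Uniqueness of $\wt{g}$ in \eqref{87:GG} then follows from the density of $D(\rn)$ in $\Hrdw$, and combining the two inequalities yields the norm equivalence \eqref{MWGH}. The principal obstacle is the converse direction: I must justify density of $D(Q)$ in $L_{p',w^{1-p'}}(Q)$ (relying on $w^{1-p'}\in A_{p'}$), apply weighted $L_{p'}$--duality with the correct dual-weight transformation $w^{1-p'}\mapsto w$, and---most delicately---glue the locally defined $\wt{g}_Q$ coherently across the overlapping cubes $Q_{J,M}$, where the coherence is ensured by the uniqueness clause in the weighted duality.
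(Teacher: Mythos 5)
Your proposal follows essentially the same route as the paper's proof: for sufficiency, term-by-term H\"older against an atomic decomposition with the exponent identity $\tfrac{1}{p}+\tfrac{r}{n}=-\bigl(\tfrac{1}{p'}+\tfrac{\vr}{n}\bigr)$ collapsing the weight powers, and for the converse, local weighted $L_{p'}$-duality on the cubes $Q_{J,M}$ followed by gluing the local representatives. The only step the paper treats more carefully is the interchange of $\int\wt{g}\,(\cdot)$ with the $S'(\rn)$-convergent sum $\sum_{J,M}h_{J,M}$ (justified there via a pointwise a.e.\ convergent subsequence and monotone convergence), which your sketch glosses over but which does not affect the structure of the argument.
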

\begin{proof}
Let $\wt{g}\in\Lmwe$. % and let $f\in \Hrdw$ be optimally represented. 
Let $f\in \Hrdw$ be optimally represented, that is, we assume that  
\begin{align}   \label{12GIS}
\begin{split}
&f= \sum_{J\in \ganz, M\in \zn} h_{J,M} \quad \text{in} \quad S'(\rn) \quad \text{with} \quad \supp h_{J,M} \subset {Q_{J,M}},
%\\ &\qquad \| a_{J,M} w^{-\frac{1}{p'}} \, | L_{p} (\rn) \|\le w(Q_{J,M})^{\frac{1}{p} + \frac{\vr}{n}}, %2^{-J(\frac{n}{p} + \vr)}, 
\end{split}
\end{align}
such that
\begin{\eq*}    %\label{13GIS}
\sum_{J\in \ganz, M \in \zn} w(Q_{J,M})^{-\left(\frac{1}{p} + \frac{\vr}{n}\right)} \| h_{J,M} w^{-\frac{1}{p'}} \, | L_{p} (\rn) \| < 2
\norm{f}{\Hrdw}.
\end{\eq*}
We observe that by Remark \ref{GF3} the convergence in \eqref{12GIS} holds also in $L_{u,w^{1-u}}(\rn)$ for $\vr u = -n$. The same holds even for $h\equiv \sum_{J\in \ganz, M\in \zn} |h_{J,M}|$. That is that $h^N\equiv \sum_{|J|, |M|\le N} |h_{J,M}|$ tends to $h$ in $L_{u,w^{1-u}}$ for $N\rightarrow\infty$. Therefore exists a partial sum $h^{N_l}$ of $h^N$ which convergences pointwise almost everywhere to $h$ for $l\rightarrow\infty$. But we have also that $\sum_{|J|, |M|\le N_l} h_{J,M}$ converges pointwise almost everywhere to $f$ (because absolute converging sums are convergent).
Lebesgue's monotone convergence theorem, H\"older's inequality and $r+\vr+n=0$ yield the estimates
\begin{align*}
  &\int_{\rn} \left|\wt{g}(y) f(y)\right|\di y
=\int_{\rn} \left|\wt{g}(y) \lim_{l\rightarrow\infty} \sum_{|J|, |M|\le N_l} h_{J,M}(y)\right|\di y
  %\leq \int_{\rn} \left\|\{g_j(y)\}_j|\ell_q\right\| \left\|\{\wt{f_j}(y)\}_j|\ell_{q'}\right\| \di y
	\\ \leq & \sum_{J\in \ganz, M\in \zn} \int_{\rn} \left|\wt{g}(y) h_{J,M}(y)\right|w^{\frac{1}{p}} w^{-\frac{1}{p}} \di y
  \\ \leq & 
	\sum_{J\in \ganz, M\in \zn}  w(Q_{J,M})^{-\left(\frac{1}{p} + \frac{r}{n}\right)} \left\| \left. \wt{g} \right| L_{p,w}(Q_{J,M}) \right\|
  \\ & \cdot w(Q_{J,M})^{-\left(\frac{1}{p'} + \frac{\vr}{n}\right)} \| h_{J,M} w^{-\frac{1}{p}} \, | L_{p'} (\rn) \|
	\\ \leq & 2 \left\|\wt{g}|\Lmwe \right\| \left\|f|\Hrdw \right\|.
\end{align*}
Hence, in particular, any $\wt{g} \in \Lmwe$ induces a bounded linear functional on $\Hrdw$. %using Proposition \ref{PttL}.
\par Conversely, suppose that $g$ is a bounded linear functional on $\Hrdw$ with the norm $\left\|g\right\|$.
Let $\varphi\in D(Q_{J,M})$. Then
\begin{\eq}   \label{4.31}
\| \varphi \, | \Hrdw \| \le   w(Q_{J,M})^{-\left(\frac{1}{p'} + \frac{\vr}{n}\right)} \| \varphi w^{-\frac{1}{p}} \, | L_{p'} (\rn) \|  <\infty,
\end{\eq}
where the first inequality holds by Definition \ref{D2.3} and the second inequality because of $w^{-\frac{p'}{p}}=w^{1-p'} \in A_{p'}$ and the fact that $D(\rn)$ is embedded in Muckenhoupt weighted Lebesgue spaces. According to our assumption on $g$ we obtain further
\[
  |g(\varphi)| \le \left\|g\right\| w(Q_{J,M})^{-\left(\frac{1}{p'} + \frac{\vr}{n}\right)} \| \varphi w^{-\frac{1}{p}} \, | L_{p'} (Q_{J,M}) \|
\]
and particularly $g\in (L_{p',w^{1-p'}} (Q_{J,M}))'$.
Using the duality $ (L_{p',w^{1-p'}} (Q_{J,M}))'\cong L_{p,w}(Q_{J,M})$ we deduce
\[
  \norm{g^{Q_{J,M}}}{L_{p,w}(Q_{J,M})} \le \left\|g\right\| w(Q_{J,M})^{-\left(\frac{1}{p'} + \frac{\vr}{n}\right)}.
\]
with some $g^{Q_{J,M}}\in L_{p,w}(Q_{J,M})$ such that
\[
  g(\varphi) = \int_{Q_{J,M}} g^{Q_{J,M}}(x)\varphi(x) \di x .
\]
Taking a sequence of dyadic cubes denoted by $Q^l$, $l\in\nat$, such that $Q^{l-1} \subset Q^l$ and $\bigcup_{l\in\nat} Q^l = \rn$ we get a single function $\tilde{g}$ on $\rn$ which equals the function $g^{Q_{J,M}}$ on $Q_{J,M}$ and which is in $L_{p,w}(Q_{J,M})$ for all $J\in\ganz$, $M\in\ganz^n$. Moreover, $\tilde{g}$ satisfies then
\begin{\eq} \label{repre}
  g(\varphi) = \int_{\rn} \tilde{g}(x)\varphi(x) \di x 
\end{\eq}
for $\varphi\in D(\rn)$.
This proves $\tilde{g}\in \Lmwe$ and
\[
  \norm{\tilde{g}}{\Lmwe}\le \left\|g\right\|
\]
by $\frac{n}{p'} +\vr = -\frac{n}{p} -r$. 
\end{proof}
\begin{Rem}
 An alternative proof of the last result in a more general setting can be found in \cite{ST09}.
\end{Rem}
\begin{Prop} \label{PDM}
Let $1<p<\infty$, $-\frac{n}{p}< r<0$, $r+\vr=-n$ and let $w\in A_p$. % and $1<q<\infty$. \label{MWGH}
Then it holds
\begin{align} \begin{split} \label{58:GGG}
	\int_{\rn}  \left|g(y) f(y)\right| \di y %= T_{g}(\{f_j\})
	\le \left\|g|\Lmwe \right\| \left\|f|\Hrdw \right\|
\end{split} \end{align}
for $g\in \Lmwe$ and $f \in \Hrdw $. %\in L_1^\text{loc}(\rn)$. %$g \in \Lmwe$ and $f\in \Hrdw$.
Moreover, holds even equality in \eqref{MWGH}.
Furthermore, for ${g}\in \Lmwe$ we have
\begin{\eq} \label{1:GHM}
  \left\|{g}|\Lmwe\right\|=\sup_{f}\left|\int_{\rn} {g}(x) f(x)\di x\right| %, %\{f_j\}\in L_p^r(\ell_q,\rn),
\end{\eq}
where the supremum is taken over all $f \in D(\rn)$ with $\left\|f|\Hrdw\right\|\leq 1$. 
\end{Prop}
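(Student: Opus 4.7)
The plan is to obtain all three conclusions by sharpening and repackaging the argument already used in the proof of Theorem~\ref{TDp1:GG}, combined with the density statement from Proposition~\ref{T3.1}.

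First I would prove \eqref{58:GGG}. Fix $g \in \Lmwe$ and $f \in \Hrdw$, and take an \emph{arbitrary} admissible decomposition $f = \sum_{J,M} h_{J,M}$ in the sense of \eqref{2.9}--\eqref{2.10a}. Repeating verbatim the monotone-convergence / H\"older chain in the first half of the proof of Theorem~\ref{TDp1:GG}, with $r+\vr+n=0$, one gets
\[
 \int_{\rn}|g(y)f(y)|\,\di y \le \|g\,|\,\Lmwe\| \sum_{J,M} w(Q_{J,M})^{-(1/p'+\vr/n)}\|h_{J,M}w^{-1/p}\,|\,L_{p'}(\rn)\|.
\]
Now the factor of $2$ in Theorem~\ref{TDp1:GG} originates solely from the fact that a quasi-optimal decomposition was selected; since the inequality above holds for every admissible representation, passing to the infimum on the right gives exactly $\|g\,|\,\Lmwe\|\cdot\|f\,|\,\Hrdw\|$, which is \eqref{58:GGG}.

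Next I would upgrade the equivalence \eqref{MWGH} to an equality. The inequality $\|g\,|\,(\Hrdw)'\| \le \|\wt g\,|\,\Lmwe\|$ is immediate from \eqref{58:GGG} applied with the pairing \eqref{87:GG}. Conversely, the second half of the proof of Theorem~\ref{TDp1:GG} produces, for any bounded linear functional $g$ on $\Hrdw$, a representative $\wt g\in\Lmwe$ satisfying $\|\wt g\,|\,\Lmwe\|\le \|g\|$ without any extra constant, via \eqref{4.31} and the isometric $L_p$-$L_{p'}$ duality with weight $w$ on each cube. Combining the two inequalities yields equality in \eqref{MWGH}.

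Finally, for \eqref{1:GHM}, denote by $S_{D}$ the supremum on the right-hand side (over $f\in D(\rn)$ with $\|f\,|\,\Hrdw\|\le 1$) and by $S$ the corresponding supremum over all $f\in\Hrdw$ with $\|f\,|\,\Hrdw\|\le 1$. The equality just established and the representation \eqref{87:GG} yield $S = \|\wt g\,|\,\Lmwe\|$, and clearly $S_D \le S$. For the reverse, given any $f\in\Hrdw$ with $\|f\,|\,\Hrdw\|\le 1$, Proposition~\ref{T3.1} provides $\vp_k\in D(\rn)$ with $\vp_k\to f$ in $\Hrdw$; renormalising by $\max(1,\|\vp_k\,|\,\Hrdw\|)$ we may assume $\|\vp_k\,|\,\Hrdw\|\le 1$. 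By \eqref{58:GGG} the pairing $h\mapsto \int\wt g\,h$ is continuous on $\Hrdw$, so $\int \wt g\,\vp_k \to \int\wt g\,f$, proving $S\le S_D$ and hence \eqref{1:GHM}.

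The only delicate point I anticipate is the passage from equivalence of norms to actual equality in \eqref{MWGH}: one has to verify that no stray multiplicative constants are introduced in either direction. In the upper bound this requires taking the infimum over decompositions rather than just one quasi-optimal one; in the lower bound it requires the first inequality in \eqref{4.31} to be sharp, which holds by the very definition of the $\Hrdw$-norm via \eqref{2.10b} applied to the singleton decomposition $\vp = \vp$. Once these two sharpenings are in place, the rest is a routine density argument.
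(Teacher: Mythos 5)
Your proof is correct and follows essentially the same route as the paper: where the paper selects a $(1+\ve)$-optimal decomposition and lets $\ve\to 0$, you take the infimum over all admissible decompositions, which is the same sharpening of the Theorem~\ref{TDp1:GG} argument, and the reverse inequality for equality in \eqref{MWGH} comes, as in the paper, from the constant-free bound $\|\wt g\,|\,\Lmwe\|\le\|g\|$ already present in that proof. Your explicit density argument for \eqref{1:GHM} (via Proposition~\ref{T3.1} and continuity of the pairing) merely fills in a step the paper leaves implicit under ``arguing as in the proof of Theorem~\ref{TDp1:GG}''.
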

\begin{proof} 
Let $\wt{g}\in\Lmwe$, $f\in \Hrdw$ and let be $\ve>0$. 
Let $f\in \Hrdw$ be represented such that  
\begin{align*}   %\label{12GIS}
\begin{split}
&f= \sum_{J\in \ganz, M\in \zn} h_{J,M} \quad \text{in} \quad S'(\rn) \quad \text{with} \quad \supp h_{J,M} \subset {Q_{J,M}},
%\\ &\qquad \| a_{J,M} w^{-\frac{1}{p'}} \, | L_{p} (\rn) \|\le w(Q_{J,M})^{\frac{1}{p} + \frac{\vr}{n}}, %2^{-J(\frac{n}{p} + \vr)}, 
\end{split}
\end{align*}
such that
\begin{\eq*}    %\label{13GIS}
\sum_{J\in \ganz, M \in \zn} w(Q_{J,M})^{-\left(\frac{1}{p} + \frac{\vr}{n}\right)} \| h_{J,M} w^{-\frac{1}{p'}} \, | L_{p} (\rn) \| < (1+\ve)
\norm{f}{\Hrow}.
\end{\eq*}
Now we deduce the desired statements arguing as in the proof of Theorem \ref{TDp1:GG}.
%We obtain now the assertions as in the proof of Theorem \ref{TDp1:GG}.
\end{proof} 
\begin{Rem}
%Proposition \ref{PDM} (and also Proposition \ref{DM} below) is not needed to prove the mapping properties in what follows (except for formula \eqref{58:GGG} which we use to prove the next Theorem). 
%%and \eqref{87F:GG} which we need to show $T=T$). 
%However, duality and \eqref{58:GGG} are important for its own. For this reason we formulated what we have really shown. 
Theorem \ref{TDp1:GG} has some history in the unweighted situation (cf. \cite{Lon84, Zor86, Kal98, GR01, AX04, GM13, Ros13, RoT14_2}).
\end{Rem}
\subsection{Dual spaces}
In the proof of the next theorem, %which is a weighted extension of \cite[Thm. 4.1, (4.5)]{RoT14_2}, 
we benefit from the following general assertion.
\begin{Prop}[p. 73 of \cite{ET96}, Lemma in Section 1.11.1 of \cite{T78}] \label{86:GG}
Let $\{ A_j \}_{j\in\nat_0}$ be a sequence of complex Banach spaces and $\{A_j'\}_{j\in\nat_0}$ their respective duals. %Moreover, we put
	For
   	\begin{align*}
		c_0(\{A_j\}) &\equiv \left\{  \left. a\equiv\left\{a_j\right\}_{j\in\nat_0} \right| a_j\in A_j,  \right.\\ &\left. 
		\quad\left\|a|c_0(A_j)\right\|\equiv\left\|a|\ell_\infty(A_j)\right\|\equiv\sup_j\left\|a_j|A_j\right\|<\infty, \left\|a_j|A_j\right\|\rightarrow 0\right\},
		\\
		\ell_1(\{A_j'\}) &\equiv \left\{  \left. a'\equiv\left\{a_j'\right\}_{j\in\nat_0} \right| a_j'\in A_j', \left\|a'|\ell_1(A_j')\right\|\equiv\sum_j\left\|a_j|A_j'\right\|<\infty \right\}
	\end{align*}
it holds
\begin{align*}
  &\left(c_0(\{A_j\})\right)'=\ell_1(\{A_j'\}) \text{ with } a'(a)= \sum_{j=0}^\infty a_j'(a_j) \text{ and }\\ &
	\left\|\cdot\left| (c_0(A_j))'\right. \right\| =\left\|\cdot \left| \ell_1(A_j') \right.\right\|.
\end{align*}
\end{Prop}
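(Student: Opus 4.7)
The plan is to prove the two directions of the isometric identification $(c_0(\{A_j\}))' \cong \ell_1(\{A_j'\})$ separately, with the pairing $a'(a)=\sum_{j} a_j'(a_j)$ as the candidate isomorphism.

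First I would verify that $\ell_1(\{A_j'\})$ embeds isometrically into $(c_0(\{A_j\}))'$. Given $a'=\{a_j'\}\in \ell_1(\{A_j'\})$ and $a=\{a_j\}\in c_0(\{A_j\})$, the estimate
\[
  \sum_{j=0}^{\infty} |a_j'(a_j)| \le \sup_k \|a_k|A_k\| \cdot \sum_{j=0}^{\infty}\|a_j'|A_j'\| = \|a|c_0(\{A_j\})\|\cdot \|a'|\ell_1(\{A_j'\})\|
\]
shows absolute convergence and gives the bound $\|a'\|_{(c_0)'}\le \|a'\|_{\ell_1}$. For the reverse inequality I would pick, for each $\varepsilon>0$ and each $j\le N$, elements $b_j\in A_j$ with $\|b_j|A_j\|\le 1$ and $|a_j'(b_j)|\ge (1-\varepsilon)\|a_j'|A_j'\|$; after multiplying each $b_j$ by a suitable unimodular scalar to align phases with $a_j'(b_j)$, the finite sequence $(b_0,\dots,b_N,0,0,\dots)$ lies in the unit ball of $c_0(\{A_j\})$ and testing $a'$ against it yields $\sum_{j\le N}\|a_j'|A_j'\|\le (1-\varepsilon)^{-1}\|a'\|_{(c_0)'}$; letting $N\to\infty$ and $\varepsilon\to 0$ gives the matching lower bound.

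Next I would prove surjectivity. Given $\Phi\in(c_0(\{A_j\}))'$, define coordinatewise functionals $a_j'\in A_j'$ by
\[
  a_j'(b) \equiv \Phi\bigl(0,\dots,0,b,0,\dots\bigr) \quad (b\in A_j,\ b\ \text{in the $j$-th slot}).
\]
Clearly $\|a_j'|A_j'\|\le \|\Phi\|$. The crucial point is to show $\{a_j'\}\in\ell_1(\{A_j'\})$. For this I would again pick $b_j\in A_j$ with $\|b_j\|\le 1$ and $|a_j'(b_j)|\ge (1-\varepsilon)\|a_j'|A_j'\|$, align phases, and use the finite vectors $(b_0,\dots,b_N,0,\dots)\in c_0(\{A_j\})$ of norm $\le 1$ to obtain
\[
  \sum_{j\le N}\|a_j'|A_j'\|\le (1-\varepsilon)^{-1}\|\Phi\|,
\]
independently of $N$. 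Finally, for an arbitrary $a=\{a_j\}\in c_0(\{A_j\})$ the truncations $a^{(N)}\equiv(a_0,\dots,a_N,0,\dots)$ converge to $a$ in the $c_0$-norm (this uses $\|a_j|A_j\|\to 0$), so continuity of $\Phi$ together with the identity $\Phi(a^{(N)})=\sum_{j\le N} a_j'(a_j)$ yields the representation $\Phi(a)=\sum_j a_j'(a_j)$, and the already-established isometry finishes the argument.

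The main obstacle will be the isometric lower bound in the $\ell_1$-norm: one must carefully choose the test vectors in each $A_j$, handle the phase alignment so that the terms $a_j'(b_j)$ add constructively, and exploit the fact that truncated sequences remain in the unit ball of $c_0(\{A_j\})$ (which is where the choice of $c_0$ rather than $\ell_\infty$ is essential, since truncation is continuous in the $c_0$-norm). Everything else is routine linear-functional bookkeeping.
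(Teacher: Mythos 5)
Your proof is correct and complete; it is the standard argument for the duality $(c_0(\{A_j\}))'=\ell_1(\{A_j'\})$, including the phase-aligned test vectors for the isometric lower bound and the norm-convergence of truncations (which is exactly where the $c_0$-condition enters). The paper itself gives no proof of this proposition, citing it from \cite{ET96} and \cite{T78}, and your argument is essentially the one found in those references.
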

\begin{Lem}[\S 8.6 in \cite{Ste93}, p. 39] \label{Mine2}
Let $w\in A_p$ for $1<p<\infty$. Then $w\notin L_1(\rn)$.
\end{Lem}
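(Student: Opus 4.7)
The plan is to derive a contradiction from the assumption $w\in L_1(\rn)$ by directly invoking the reverse doubling condition \eqref{RDC}, which has already been recorded as a consequence of the Muckenhoupt condition earlier in the paper. This makes the proof essentially a one-step iteration argument rather than anything requiring new ingredients.

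First, I would fix any cube $Q=Q_{J,M}$. Since $w>0$ almost everywhere by the very definition of $A_p$, one has $w(Q)>0$. Next, I would iterate \eqref{RDC}: from $w(Q')\le c\,w(2Q')$ with the same constant $0<c<1$ applied successively to $Q, 2Q, 4Q, \ldots$, one obtains
\[
  w(Q)\le c\,w(2Q)\le c^2 w(4Q)\le \cdots \le c^k\, w(2^kQ)
\]
for every $k\in\nat$, hence $w(2^k Q)\ge c^{-k}\, w(Q)$.

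If we assume, for contradiction, that $w\in L_1(\rn)$, then $w(2^k Q)\le \|w\,|\,L_1(\rn)\|<\infty$ uniformly in $k$. Letting $k\to\infty$ and using $c^{-1}>1$ yields $c^{-k} w(Q)\to\infty$, which is incompatible with the uniform bound. This contradiction shows $w\notin L_1(\rn)$.

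I do not expect any genuine obstacle here; the content of the lemma is really a repackaging of the reverse doubling property, which is the decisive ingredient. The only small subtlety is to make sure one uses a cube $Q$ with $w(Q)>0$, which is automatic from the $A_p$ definition, and to apply \eqref{RDC} with the same constant $c$ at every scale, which is allowed because the reverse doubling constant in \eqref{RDC} is independent of the center and side-length of the cube.
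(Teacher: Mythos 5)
Your proposal is correct and follows essentially the same route as the paper: both iterate the reverse doubling condition \eqref{RDC} to get $w(Q)\le c^k\,w(2^kQ)\le c^k\,w(\rn)$ and derive a contradiction. The only cosmetic difference is that the paper lets $k\to\infty$ to conclude $w(Q)=0$ for every cube and then contradicts $w>0$ a.e., whereas you rearrange to $w(2^kQ)\ge c^{-k}w(Q)\to\infty$ against the uniform $L_1$ bound; these are the same argument.
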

\begin{proof} 
 % We mention that $w$ satisfies the reverse doubling condition, \eqref{Mine}, cf. .. 
  Let $Q$ be a cube. Moreover, let us assume $w\in L_1(\rn)$. Taking into account the reverse doubling condition of $w$ (cf. \eqref{RDC}) we deduce 
	\[
	  w(Q)\le c^l w(2^l Q)\le c^l w(\rn) <\infty, \qquad l\in\nat
	\]
	for a constant $c<1$. Passing to the limit $l\rightarrow\infty$ we obtain $w(Q)=0$ for arbitrary cubes $Q$ and hence $w$ is a.e. identically zero which contradicts our assumption $w\in A_p$.
\end{proof} 
\begin{Prop}  \label{HFS}
Let $1<p<\infty$, $-n <\vr <-n/p$ and let $w\in A_{p'}$ with $p'=\frac{p}{p-1}$. 
Then the {predual Morrey space} $\Hrow$ collects all $h\in S'(\rn)$ %L_{1}^\text{loc}(\rn)$ %S'(\rn)$ 
which can be represented as
\begin{align}   \label{2.9p}
\begin{split}
&h= \sum_{x\in\rat^n, J\in \ganz} h_{x,J} \quad \text{in} \quad S'(\rn) \quad \text{with} \quad \supp h_{x,J} \subset {Q(x,J)},
\end{split}
\end{align}
such that
\begin{\eq}    \label{2.10p}
\sum_{x\in\rat^n, J\in \ganz} w(Q(x,J))^{-\left(\frac{1}{p} + \frac{\vr}{n}\right)} \| h_{x,J} w^{-\frac{1}{p'}} \, | L_{p} (\rn) \| <\infty,
\end{\eq}
where ${Q(x,J)}$ is defined as in \eqref{Meq2}.
Moreover, we have the equivalence of norms
\begin{align} 
%\begin{split} 
\| h \, | \Hrow \|^\ast \equiv & \inf \sum_{x\in\rat^n, J\in \ganz} w(Q(x,J))^{-\left(\frac{1}{p} + \frac{\vr}{n}\right)} \| h_{x, J} w^{-\frac{1}{p'}} \, | L_{p} (\rn) \|
\notag\\ \cong & \| h \, | \Hrow \| \label{2.10bp}
%\end{split}
\end{align}
where the infimum is taken over all representations \eqref{2.9p}, \eqref{2.10p}.
\end{Prop}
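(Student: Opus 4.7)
The plan is to establish the two inequalities behind \eqref{2.10bp} separately, using that every dyadic cube $Q_{J,M}$ is of the form $Q(x,J)$ with $x\in\rat^n$, and, in the opposite direction, that every rational-centered cube $Q(x,J)$ is contained in a dyadic cube of comparable $w$-measure.

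The direction $\|h|\Hrow\|^{\ast}\le \|h|\Hrow\|$ is immediate. Observing $Q_{J,M}=Q(2^{-J}M,J)$ with $2^{-J}M\in\rat^n$, any admissible dyadic representation of $h$ in the sense of Definition \ref{D2.3} is automatically of the form \eqref{2.9p}--\eqref{2.10p}, and the two associated sums of norms coincide term by term; taking the infimum yields the bound.

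For the converse $\|h|\Hrow\|\le C\|h|\Hrow\|^{\ast}$, I start with a representation $h=\sum_{x\in\rat^n,\,J\in\ganz} h_{x,J}$ satisfying \eqref{2.9p}--\eqref{2.10p} and convert it into a dyadic one. For each pair $(x,J)$ I apply the coordinate-wise rule \eqref{WFM1} from the proof of Lemma \ref{p4.2} to select $M(x,J)\in\zn$ such that $Q(x,J)\subset Q_{J-1,M(x,J)}$. I then regroup
\[
\tilde h_{J',M'}\equiv \sum_{\substack{(x,J):\,J-1=J',\\ M(x,J)=M'}} h_{x,J},\qquad J'\in\ganz,\ M'\in\zn,
\]
so that $\supp\tilde h_{J',M'}\subset Q_{J',M'}$ and $h=\sum_{J',M'}\tilde h_{J',M'}$. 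The unconditional convergence of the absolute series $\sum_{x,J}|h_{x,J}|$ in $L_{u,w^{1-u}}(\rn)\hra S'(\rn)$, guaranteed by Remark \ref{GF3} for $\vr u=-n$, both legitimizes the regrouping in $S'(\rn)$ and ensures that each $\tilde h_{J',M'}$ is a well-defined distribution with the claimed support.

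The norm estimate then combines three ingredients: the triangle inequality in $L_p(\rn)$, the inclusion $Q(x,J)\subset Q_{J-1,M(x,J)}$ with side-length ratio $2$, which yields $w(Q_{J-1,M(x,J)})\le C\,w(Q(x,J))$ via \eqref{DC} with a constant independent of $(x,J)$, and the positivity of the exponent $-\left(\tfrac{1}{p}+\tfrac{\vr}{n}\right)$ coming from $\vr<-n/p$. Concretely,
\begin{align*}
&\sum_{J',M'} w(Q_{J',M'})^{-(1/p+\vr/n)}\,\|\tilde h_{J',M'}\,w^{-1/p'}\,|\,L_p(\rn)\|\\
&\qquad\le \sum_{x,J} w(Q_{J-1,M(x,J)})^{-(1/p+\vr/n)}\,\|h_{x,J}\,w^{-1/p'}\,|\,L_p(\rn)\|\\
&\qquad\le C\sum_{x,J} w(Q(x,J))^{-(1/p+\vr/n)}\,\|h_{x,J}\,w^{-1/p'}\,|\,L_p(\rn)\|,
\end{align*}
and taking the infimum over all representations of $h$ completes the proof. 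The main obstacle is purely bookkeeping, namely verifying the convergence and support structure of the regrouped series; both issues dissolve upon invoking the unconditional-convergence statement of Remark \ref{GF3}, after which the doubling estimate \eqref{DC} supplies the required uniform constant without any delicate cancellation argument.
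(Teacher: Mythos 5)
Your proof is correct and follows essentially the same route as the paper: the easy inequality comes from viewing dyadic cubes as rational-centered cubes, and the converse is obtained by regrouping the pieces $h_{x,J}$ into dyadic cubes at a slightly coarser level and invoking \eqref{DC} together with the positivity of the exponent $-\left(\frac{1}{p}+\frac{\vr}{n}\right)$. The only difference is cosmetic: you assign each $(x,J)$ to a unique cube $Q_{J-1,M(x,J)}$ via \eqref{WFM1}, which gives a genuine partition of the index set, whereas the paper first collects the $h_{x,J}$ with $x\in Q_{J,M}$ (support in $2Q_{J,M}$) and then passes to level $J-2$ with an explicit anti-double-counting condition; your bookkeeping is, if anything, a bit cleaner.
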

\begin{proof} 
  Let $h\in S'(\rn)$ with $\| h \, | \Hrow \|^\ast<\infty$ be optimally represented according to \eqref{2.9p}-\eqref{2.10bp}.
	We define 
	\[
	  \wt{h_{J,M}} \equiv \sum_{x\in\rat^n:\ x\in Q_{J,M}} h_{x,J}
	\]
	for all $J\in\ganz$, $M\in\ganz^n$. Then $\supp \wt{h_{J,M}} \subset 2 Q_{J,M}$. The cube $2 Q_{J,M}$ is contained in at least one cube $Q_{J-2,\wt{M}}$ with an appropriate $\wt{M}\in\ganz^n$ (cf. \eqref{WFM1}).
	Then we define for $J\in\ganz$
	\[
	  h_{J-2,\wt{M}} \equiv 
		\sum_{\substack{ M\in\ganz^n:\ 2 Q_{J,M}\subset Q_{J-2,\wt{M}} 
		\\ \text{ and } \wt{h_{J,M}} \text{ is not summed up in another } h_{J-2,\check{M}} \text{ with } \check{M}\neq \wt{M} }}
		\wt{h_{J,M}}, \qquad \wt{M} \in\ganz^n
	\]
	%for all $\wt{M} \in\ganz^n$ with $|\wt{M}| \rightarrow \infty$.
	where we point out that the second condition with respect to the summation in the definition of $h_{J-2,\wt{M}}$ ensures that
	\[
	  \sum_{x\in\rat^n, J\in \ganz} h_{x, J} = \sum_{J\in \ganz, \wt{M} \in \zn} h_{J-2,\wt{M}}.
	\]
	Finally,
	\begin{align*}
	  &\sum_{J\in \ganz, \wt{M} \in \zn} w(Q_{J-2,\wt{M}})^{-\left(\frac{1}{p} + \frac{\vr}{n}\right)} \| h_{J-2,\wt{M}} w^{-\frac{1}{p'}} \, | L_{p} (\rn) \|
		\\ \le 
		&\sum_{x\in\rat^n, J\in \ganz} w(Q(x,J))^{-\left(\frac{1}{p} + \frac{\vr}{n}\right)} \| h_{x,J} w^{-\frac{1}{p'}} \, | L_{p} (\rn) \| \le 2 \| h \, | \Hrow \|^\ast
	\end{align*} 
	where the last but one inequality holds by the fact $Q(x,J)\subset 2 Q_{J,M}$ for $x\in Q_{J,M}$.
\end{proof} 
\begin{Thm} \label{TDp2:GG}
Let $1<p<\infty$, $-\frac{n}{p}< r<0$, $r+\vr=-n$ and let $w\in A_p$. %and $1<q<\infty$.
Then the dual space of $\Lciw$ is $\Hrdw$. 
Moreover,
$g\in \left(\Lciw\right)'$ if, and only if, $g$ can be uniquely represented as 
\begin{\eq} \label{represDual}
  g(f)=\int_{\rn} \wt{g}(x) f(x)\di x
\end{\eq}
for all $f\in L_{-\frac{n}{r},w}(\rn)$ where $\wt{g}\in \Hrdw$ and
\begin{\eq}  \label{represDualNorm}
  %g\in \Hrdw \text{ and } 
	\left\|g\left|\left(\Lciw\right)'\right.\right\|
	\cong \left\|\wt{g}|\Hrdw\right\|.
\end{\eq}
\end{Thm}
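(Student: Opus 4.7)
The plan is to reduce the duality claim to the sequence-space identity $(c_0(\{A_j\}))' \cong \ell_1(\{A_j'\})$ from Proposition \ref{86:GG}, applied with $A_{J,M} = L_{p,w}(Q_{J,M})$ (whose dual is $L_{p',w^{1-p'}}(Q_{J,M})$ under the standard pairing $\int f h\,\di x$). The bridge is the isometric embedding
\[
\Phi: \Lmwe \to \ell_\infty(\{L_{p,w}(Q_{J,M})\}), \qquad f \mapsto \bigl\{ w(Q_{J,M})^{-(\frac{1}{p}+\frac{r}{n})}\, f\,\chi_{Q_{J,M}} \bigr\}_{J,M},
\]
coming directly from the definition of the Morrey norm. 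For the easy inclusion $\Hrdw \hookrightarrow (\Lciw)'$ I simply read off from the symmetric estimate \eqref{58:GGG} of Proposition \ref{PDM} that every $\wt{g} \in \Hrdw$ gives a functional $f \mapsto \int \wt{g}\, f\, \di x$ of norm at most $\|\wt{g}|\Hrdw\|$ on $D(\rn)$, which extends by continuity to all of $\Lciw$.

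For the converse I first verify that $\Phi$ actually maps $\Lciw$ into the closed subspace $c_0(\{L_{p,w}(Q_{J,M})\})$. By density and the isometry of $\Phi$ it suffices to check this on $D(\rn)$: for $\varphi \in D(\rn)$ cubes disjoint from $\supp\varphi$ contribute zero; on very large cubes one invokes $w \notin L_1(\rn)$ (Lemma \ref{Mine2}) together with iterated reverse doubling \eqref{RDC} to force $w(Q_{J,M}) \to \infty$ as $J \to -\infty$, which combined with $\frac{1}{p}+\frac{r}{n} > 0$ drives the sequence entry to zero; on small cubes intersecting $\supp\varphi$ the crude bound $\|\varphi|L_{p,w}(Q_{J,M})\| \le \|\varphi\|_\infty w(Q_{J,M})^{1/p}$ together with $-r/n > 0$ and absolute continuity of $\int w\, \di x$ on a compact neighbourhood of $\supp\varphi$ yields decay uniform in $M$.

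Given $g \in (\Lciw)'$ I transport $g$ via $\Phi$ to a functional on $\Phi(\Lciw) \subset c_0(\{L_{p,w}(Q_{J,M})\})$, extend by Hahn--Banach to the ambient $c_0$ with the same norm, and invoke Proposition \ref{86:GG} to obtain an $\ell_1$ representer $\{h_{J,M}\}$ with $h_{J,M} \in L_{p',w^{1-p'}}(Q_{J,M})$. Setting $\wt{h}_{J,M} \equiv w(Q_{J,M})^{-(\frac{1}{p}+\frac{r}{n})} h_{J,M}$ and $\wt{g} \equiv \sum_{J,M} \wt{h}_{J,M}$ in $S'(\rn)$, unwinding $\Phi$ produces the representation $g(f) = \int \wt{g}\, f\, \di x$ for $f \in D(\rn)$. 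The identities $\frac{1}{p'} + \frac{\vr}{n} = -(\frac{1}{p}+\frac{r}{n})$ (from $r+\vr = -n$) and $1-p' = -p'/p$ make the weight exponents cancel, so that
\[
\sum_{J,M} w(Q_{J,M})^{-(\frac{1}{p'}+\frac{\vr}{n})} \| \wt{h}_{J,M}\, w^{-1/p} \, | L_{p'}(\rn) \| = \sum_{J,M} \|h_{J,M}|L_{p',w^{1-p'}}(Q_{J,M})\| = \|g\|_{(\Lciw)'},
\]
which places $\wt{g}$ in $\Hrdw$ with the correct norm control and, together with the first step, delivers the norm equivalence \eqref{represDualNorm}.

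Extension from $D(\rn)$ to arbitrary $f \in L_{-n/r,w}(\rn)$ in \eqref{represDual} is then routine: Proposition \ref{T3.1} yields $\Hrdw \hookrightarrow L_{u,w^{1-u}}(\rn)$ with $u = -n/\vr \in (1,p')$ and dual exponent $u' = -n/r$, so H\"older makes $\int \wt{g}\, f\, \di x$ absolutely convergent and continuous on $L_{-n/r,w}(\rn)$, while Proposition \ref{p2.5} with the density of $D(\rn)$ in $L_{-n/r,w}(\rn)$ places $L_{-n/r,w}(\rn) \subset \Lciw$ so that $g$ restricts continuously and the identity on test functions propagates; uniqueness of $\wt{g}$ is immediate from $\int \wt{g}\, f\, \di x = 0$ on $D(\rn)$ forcing $\wt{g} = 0$ in $S'(\rn) \supset \Hrdw$. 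The main obstacle I expect is precisely the $c_0$-verification, specifically the decay uniform in $M$ on small cubes (requiring the quantitative form of absolute continuity of $\int w\, \di x$, not just $w \in L_1^{\mathrm{loc}}$) and on large cubes (requiring the iterated quantitative reverse doubling, not merely $w \notin L_1$); once this is secured the rest is clean Hahn--Banach/$\ell_1$ bookkeeping.
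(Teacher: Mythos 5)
Your proposal is correct and follows essentially the same route as the paper: the isometric embedding of the Morrey space into $\ell_\infty$ of weighted Lebesgue spaces over cubes, the verification that test functions land in $c_0$ (for which you rightly isolate the two delicate limits, $w\notin L_1(\rn)$ for large cubes and uniform absolute continuity of $w\,\di x$ for small ones), Hahn--Banach, and the $(c_0)'\cong\ell_1$ duality of Proposition \ref{86:GG} to produce the representer. The only structural difference is that you index by the dyadic family $Q_{J,M}$ rather than the rational-centered family $Q(x,J)$ used in the paper, which lets you match Definition \ref{D2.3} directly and dispense with Proposition \ref{HFS}; this is a harmless (indeed slightly cleaner) variant, provided you still justify the $S'(\rn)$-convergence of $\sum_{J,M}\wt{h}_{J,M}$ and the sum--integral interchange via the embedding \eqref{3.5}, as the paper does.
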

\begin{proof}
It follows from \eqref{58:GGG} that any $g\in \Hrdw$ induces a bounded linear functional on $\Lciw$. 
\par Conversely, suppose that $g$ is a bounded linear functional on $\Lciw$ with norm $\left\|g\right\|$. 
Let $f\in D(\rn)$ and $f_{x, J} = f \, \chi_{Q(x,J)}$ for $J\in \ganz$ and $x\in\rat^n$. %where $\chi_{J,M}$ is the characteristic function of $Q_{J,M}$. 
Then
\begin{align} \begin{split}   \label{4.18}
& w(Q(x,J))^{-\left( \frac{1}{p} + \frac{r}{n}\right)} \| f_{x, J} \, | L_{p,w} (Q(x,J)) \|
\\ \le & w(Q(x,J))^{-\left( \frac{1}{p} + \frac{r}{n}\right)} \| f \, | L_{p,w} (\rn) \| \to 0 %\quad \text{if} \quad J \to -\infty
\end{split}\end{align}
if $J \rightarrow -\infty$. 
This follows from $\frac{n}{p} +r >0$, Lemma \ref{Mine2} and Lebesgue's monotone convergence theorem.
Moreover,
\begin{\eq}   \label{4.19}
w(Q(x,J))^{-\left( \frac{1}{p} + \frac{r}{n}\right)} \| f_{x, J} \, | L_{p,w} (Q(x,J)) \| 
\le w(Q(x,J))^{-\frac{r}{n}} \| f \, | L_\infty (\rn) \| 
\to 0 %\quad \text{if} \quad J \to \infty
\end{\eq}
if $J \rightarrow \infty$
follows from $r<0$ and $w(Q(x,J))\to 0$ for $J \rightarrow \infty$ (which holds by Lebesgue's dominated convergence theorem and $w\in L_1^{\text{loc}}(\rn)$). Furthermore, for fixed $J_0\in \nat$ and $|J|\le J_0$ we have 
\begin{\eq}   \label{4.20}
w(Q(x,J))^{-\left( \frac{1}{p} + \frac{r}{n}\right)} \| f_{x, J} \, | L_{p,w} (Q(x,J)) \| \to 0 \quad \text{if} \quad |x| \to \infty,
\end{\eq}
since $\supp(f) \cap Q(x,-J_0)=\emptyset$ for $|x|> l$ and $l=l(f, x, J_0)\in \nat$ being sufficiently large.
Using Lemma \ref{p4.2} we observe further that
\begin{align} \begin{split} \label{4.21}
    \left\|f|\Lmwe\right\|& \cong \sup_{x\in\rat^n, J\in\ganz} \Big( \int_{Q(x,J)} |f(y)|^p \,  w(Q(x,J))^{-\left(1 + \frac{rp}{n}\right)} w(y) \di y \Big)^\frac{1}{p} \\&= \left\| F |c_0\left(L_{p,\mu_{x, J}}(Q(x,J))\right)\right\|
\end{split}\end{align}
  where %$f_{x, J} \equiv f \chi_{Q_{JM}}$, 
	$F\equiv \{ f_{x, J}\}_{x\in\rat^n, J\in\ganz}$,
	$\mu_{x, J}(\di y)\equiv w(Q(x,J))^{-\left(1 + \frac{rp}{n}\right)} \, w(y) \di y$ and 
  \begin{align*}
    &\left\|F|c_0\left(L_{p,\mu_{x, J}}(Q(x,J))\right)\right\|
		\\ \equiv & 
		\sup_{x\in\rat^n, J\in\ganz} \Big( \int_{Q(x,J)} |f_{x, J}(y)|^p \,  w(Q(x,J))^{-\left(1 + \frac{rp}{n}\right)} w(y) \di y \Big)^\frac{1}{p}. 
  \end{align*}
	%In particular $F= \{ f_{x, J} \}  \in c_0 (L_p(\mu_{x, J},Q(x,J)))$.
  	Combining \eqref{4.18}-\eqref{4.21} it follows that $\Lciw$ is isomorphic to a closed subspace of $c_0\left(L_{p,\mu_{x, J}}(Q(x,J))\right)$. % analogously to the scalar-valued case in \cite[(4.18)-(4.20)]{RoT14_2}.
	More precisely, we have a linear, surjective map $I:f \mapsto \{f_{x, J}\}_{x, J}$ from $\Lciw$ onto the closed subspace $\{\{f_{x, J}\} | f\in \Lciw \}$ of $c_0\left(L_{p,\mu_{x, J}}(Q(x,J))\right)$ satisfying \eqref{4.21},
	\[
	  I \Lciw = \{\{f_{x, J}\}_{x\in\rat^n, J\in\ganz} | f\in \Lciw \} \hra c_0\left(L_{p,\mu_{x, J}}(Q(x,J))\right).
	\]
  Hahn-Banach's theorem yields
$g\in \left(\Lciw\right)'$ if, and only if, \\$g\in \left(c_0\left(L_{p,\mu_{x, J}}(Q(x,J))\right)\right)'$ and by Proposition \ref{86:GG} %and Lebesgue's dominated convergence theorem (cf. \eqref{Lconv} for an integrable majorant) 
we have the representation
\begin{align}
%\begin{split} 
\label{4.32a}
 & g(f)=\sum_{x\in\rat^n, J\in\ganz} \int_{Q(x,J)}  f(y) g_{x, J}(y) \, w(Q(x,J))^{-\left(1 + \frac{rp}{n}\right)} \di y
\end{align}
 for any $f\in \Lciw$ with $\{ g_{x, J} \}\in \ell_1\left(L_{p',\wt{\mu_{x, J}}}(Q(x,J))\right)$
where
 \begin{align*}
   &\left\|\{ g_{x, J} \}|\ell_1\left(L_{p',\wt{\mu_{x, J}}}(Q(x,J))\right)\right\|
   \\=& 
   \sum_{x\in\rat^n, J\in\ganz} \Big( \int_{Q(x,J)} |g_{x, J}(y)|^{p'}  \, w(Q(x,J))^{-\left(1 + \frac{rp}{n}\right)} w(y)^{-\frac{p'}{p}} \di y \Big)^\frac{1}{p'}
 \end{align*}
and $\wt{\mu_{x, J}}(\di y)\equiv w(Q(x,J))^{-\left(1 + \frac{rp}{n}\right)} w(y)^{-\frac{p'}{p}} \di y$.
To get \eqref{4.32a} we used 
\[
\left(L_{p,\mu_{x, J}}(Q(x,J))\right)'=L_{p',\wt{\mu_{x, J}}}(Q(x,J))  \qquad , x\in\rat^n, J\in\ganz,
\]
in order to apply Proposition \ref{86:GG} which is deduced analogously to the well-known duality assertion in Muckenhoupt weighted Lebesgue spaces $\left(L_{p,w}(Q(x,J))\right)'=L_{p', w^{1-p'}}(Q(x,J))$ (where $w\in A_p$) replacing the Lebesgue measure by the Lebesgue measure multiplied with the constant $w(Q(x,J))^{-\left(1 + \frac{rp}{n}\right)}$.   
Moreover, Hahn-Banach's theorem and Lemma \ref{p4.2} enable us to assume 
\begin{\eq} \label{as8}
  \left\|g\left|\left(\Lci\right)'\right.\right\|\cong\left\|\{ g_{x, J} \}\left|\ell_1\left(L_{p',\wt{\mu_{x, J}}}(Q(x,J))\right)\right.\right\|.
\end{\eq}
Using Lebesgue's dominated convergence theorem (cf. \eqref{Lconv} for an integrable majorant) 
we deduce from \eqref{4.32a} the representation
\begin{align}
%\begin{split} 
\label{as9}
 & g(f)
	= \int_{\rn} f(y) \wt{g}(y) \di y %\notag
%\end{split}
\end{align}
 for any $f\in D(\rn)$ and %\Lciw$ and %with %$\{ g_{x, J} \}\in \ell_1\left(L_{p'}(\wt{\mu_{x, J}},Q(x,J))\right)$
%and
\[
  \wt{g}(y) \equiv \sum_{x\in\rat^n, J\in\ganz} g_{x, J}(y) \chi_{Q(x,J)}(y) \, w(Q(x,J))^{-\left(1 + \frac{rp}{n}\right)}.
\]
With
\[
h_{x,J} = g_{x, J}(y) \chi_{Q(x,J)}(y) \, w(Q(x,J))^{-\left(1 + \frac{rp}{n}\right)}
\]
and $r+\vr = -n$ one has
%\begin{\eq}   \label{4.32a}
\begin{align*}
&w(Q(x,J))^{-\left(\frac{1}{p'} + \frac{\vr}{n}\right)} \Big( \int_{Q(x,J)} |h_{x,J} (y) |^{p'} w(y)^{-\frac{p'}{p}} \, \di y\Big)^{1/p'} \\
=&  w(Q(x,J))^{\frac{1}{p} + \frac{r}{n}} \left( \int_{Q(x,J)} |g_{x, J}(y)|^{p'} \, w(Q(x,J))^{-\left(1 + \frac{rp}{n}\right)(p' -1)} 
\right.\\ &\left.
\qquad\qquad\qquad\qquad\cdot\quad w(Q(x,J))^{-\left(1 + \frac{rp}{n}\right)} w(y)^{-\frac{p'}{p}} \, \di y \right)^{1/p'} \\
=& \Big( \int_{Q(x,J)} |g_{x, J}(y)|^{p'} \, w(Q(x,J))^{-\left(1 + \frac{rp}{n}\right)} w(y)^{-\frac{p'}{p}} \, \di y\Big)^{1/p'} .
\end{align*}
%\end{\eq}
Using Proposition \ref{HFS} and \eqref{as8} we see therefore that
\begin{\eq*}  %\label{4.32b}
  \|\wt{g} \, | \Hrdw \| \le c_1 \left\|\{ g_{x, J} \}\left|\ell_1\left(L_{p',\wt{\mu_{x, J}}}(Q(x,J))\right)\right.\right\| \le c_2 \left\|g\right\|
\end{\eq*}
where $\wt{g} = \sum_{x\in\rat^n, J\in\ganz} h_{x,J}$ converges in $L_{u,w^{1-u}}(\rn)$ with $\vr u = -n$ by Proposition \ref{T3.1} and hence in $S' (\rn)$ (noting that $1<u<p'$ and $w^{1-u}\in A_u$) . 
%We mention that w
We can establish an integrable majorant to justify \eqref{as9} (resp. the application of Lebesgue's dominated convergence theorem) by the same argumentation using $f\in D(\rn)$. Indeed, 
\begin{\eq}  \label{Lconv}
  |f|\sum_{x\in\rat^n, J\in\ganz} |h_{x,J}|=\sum_{x\in\rat^n, J\in\ganz} |f\, h_{x,J}|
\end{\eq}	
is integrable by H\"older's inequality since $\sum_{x\in\rat^n, J\in\ganz} |h_{x,J}| \in \Hrdw \hra L_{u,w^{1-u}} (\rn)$, $\vr u = -n$, analogously to $\wt{g}\in \Hrdw$ and \eqref{3.5} and moreover $f\in L_{u',w} (\rn)$ where $w\in A_{u'}$ by $p<u'$.
It remains to show that $\wt{g}$ represents $g$ for all $f\in L_{-\frac{n}{r},w}(\rn)$ cf. \eqref{represDual} and not only for all $f\in D(\rn)$. Let $f\in L_{-\frac{n}{r},w}(\rn)$. Then there is a sequence $f_n \in D(\rn)$, $n\in\nat$, with $f_n \rightarrow f$ almost everywhere and $f_n \rightarrow f$ in $L_{-\frac{n}{r},w}(\rn)$ and moreover such that $f$ can be dominated by $f_n$, i.e. $|f_n|\le |f|$. Because of the continuity of $g$, \eqref{represDual} for $f_n \in D(\rn)$ and Lebesgue's dominated convergence theorem ($|\wt{g}(\cdot)| f(\cdot)$ is an integrable majorant by \eqref{58:GGG}) we obtain 
\begin{align*}
  g(f)&=\lim_{n\in\nat} g(f_n)= \lim_{n\in\nat} \int_{\rn} \wt{g}(x)f_n(x)\di x = \int_{\rn} \wt{g}(x) \lim_{n\in\nat}  f_n(x)\di x \\&= \int_{\rn} \wt{g}(x) f(x)\di x
\end{align*}
and hence \eqref{represDual} for all $f\in L_{-\frac{n}{r},w}(\rn)$.
\end{proof}
\begin{Prop} \label{DM}
Let $1<p<\infty$, $-\frac{n}{p}< r<0$, $r+\vr=-n$ and let $w\in A_p$. %and $1<q<\infty$.
Using the norms 
\begin{align}
  \norm{f}{\Lmwe}^\ast %\cong &  \sup_{x\in\rn, R>0} w(B_R(x))^{-\left(\frac{1}{p}+\frac{r}{n} \right)} \left\|f|L_{p, w}(B_R(x))\right\| \\
	 \equiv &  \sup_{x\in\rat^n, J\in\ganz} w(Q(x,J))^{-\left(\frac{1}{p}+\frac{r}{n} \right)} \left\|f|L_{p, w}(Q(x,J))\right\| 
\end{align}
(cf. \eqref{Meq2}) and $\norm{\cdot}{\Hrdw}^\ast$ (cf. \eqref{2.10p}) we obtain even equality in \eqref{represDualNorm}. That is,
$g\in \left(\Lciw\right)'$ if, and only if, $g$ can be uniquely represented as 
\[ %\begin{\eq} \label{represDualA}
  g(f)=\int_{\rn} \wt{g}(x) f(x)\di x
\] %\end{\eq}
for all $f\in L_{-\frac{n}{r},w }(\rn)$ where $\wt{g}\in \Hrdw$ and
\[
  %g\in \Hrdw \text{ and } 
	\left\|g\left|\left(\Lciw\right)'\right.\right\|^\ast\equiv 
	\sup_{\substack{f\in \Lciw:\\\norm{f}{\Lmwe}^\ast\le 1}} |g(f)| =\left\|\wt{g}|\Hrdw\right\|^\ast.
\]
Moreover, for $\wt{g}\in \Hrdw$ it holds %if $\{g_j\}\in \Hv$, then
\begin{\eq} \label{1GIS}
  \left\|\wt{g}|\Hrdw\right\|^\ast=\sup_{f}\left|\int_{\rn} \wt{g}(x) f(x)\di x\right|
%	=\sup_{f} \int_{\rn} \wt{g}(x) f(x)\di x %, \{g_j\}\in \Hv,
\end{\eq}
where the supremum is taken over all $f\in L_{-\frac{n}{r},w }(\rn)$ with $\|f | \Lmwe\|^\ast\leq 1$. 
\end{Prop}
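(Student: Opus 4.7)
The plan is to rerun the arguments of Theorem \ref{TDp1:GG} and Theorem \ref{TDp2:GG} with the sharper norms $\norm{\cdot}{\Lmwe}^\ast$ and $\norm{\cdot}{\Hrdw}^\ast$, tracking constants so that no loss is incurred. The key observation is that these two norms are naturally matched: both are defined through the cubes $Q(x,J)$ with $x\in\rat^n$, $J\in\ganz$, and Proposition \ref{HFS} provides atomic decompositions of $\Hrdw$ over exactly this family.

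For the estimate $|g(f)|\le \norm{\wt g}{\Hrdw}^\ast \norm{f}{\Lmwe}^\ast$, I would fix $\wt g\in\Hrdw$ and, for given $\ve>0$, choose a representation $\wt g=\sum_{x,J} h_{x,J}$ with $\supp h_{x,J}\subset Q(x,J)$ whose summed Hölder quantity is at most $(1+\ve)\norm{\wt g}{\Hrdw}^\ast$. Applying H\"older's inequality on each $Q(x,J)$ against the weight $w$ and using $r+\vr=-n$ gives, exactly as in the proof of Theorem \ref{TDp1:GG},
\[
\int_{\rn} |\wt g(y)f(y)|\,\di y \le (1+\ve)\, \norm{\wt g}{\Hrdw}^\ast \, \norm{f}{\Lmwe}^\ast
\]
for $f\in D(\rn)$. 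Letting $\ve\to 0$ and extending to $f\in L_{-n/r,w}(\rn)$ by the dominated convergence argument already used at the end of the proof of Theorem \ref{TDp2:GG} yields the representation \eqref{represDual} and the inequality $\norm{g}{(\Lciw)'}^\ast\le \norm{\wt g}{\Hrdw}^\ast$.

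For the converse, I would follow the $c_0/\ell_1$ strategy of Theorem \ref{TDp2:GG}. By \eqref{4.21}, now read as an \emph{equality} under the $\ast$-norm, the map $I\colon f\mapsto \{f_{x,J}\}_{x,J}$ embeds $\Lciw$ isometrically, rather than merely up to equivalent norms, into $c_0(L_{p,\mu_{x,J}}(Q(x,J)))$. Hahn-Banach extends $g$ to the ambient space without change of norm, and Proposition \ref{86:GG} produces $\{g_{x,J}\}\in \ell_1(L_{p',\wt\mu_{x,J}}(Q(x,J)))$ of $\ell_1$-norm equal to $\norm{g}{(\Lciw)'}^\ast$. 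The computation after \eqref{as9} rewrites this $\ell_1$-norm as $\sum_{x,J} w(Q(x,J))^{-(1/p'+\vr/n)} \|h_{x,J}w^{-1/p}\|_{L_{p'}}$ for the atoms $h_{x,J}= g_{x,J}\chi_{Q(x,J)} w(Q(x,J))^{-(1+rp/n)}$, which in turn majorizes $\norm{\wt g}{\Hrdw}^\ast$ (by the infimum in \eqref{2.10bp}). This gives $\norm{\wt g}{\Hrdw}^\ast\le \norm{g}{(\Lciw)'}^\ast$ and hence equality. Formula \eqref{1GIS} then follows by taking the supremum of $f\mapsto |\int \wt g(x)f(x)\,\di x|$ over the closed unit ball of $L_{-n/r,w}(\rn)$ with respect to $\norm{\cdot}{\Lmwe}^\ast$ and invoking the established isometric duality.

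The main obstacle is bookkeeping rather than conceptual: one must be sure that the atomic decomposition is taken over precisely the cubes $Q(x,J)$ of Proposition \ref{HFS} so that H\"older pairs them exactly with $\norm{\cdot}{\Lmwe}^\ast$, and that Proposition \ref{86:GG} is applied with the correctly weighted measures $\mu_{x,J}$ and $\wt\mu_{x,J}$ so that the $c_0$-to-$\ell_1$ duality really holds with equality of norms and not merely up to multiplicative constants.
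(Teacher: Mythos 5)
Your proposal is correct and follows essentially the same route as the paper, whose own proof of Proposition \ref{DM} consists precisely of rerunning the generalized H\"older argument behind \eqref{58:GGG} and the $c_0$--$\ell_1$ duality argument of Theorem \ref{TDp2:GG} with the $\ast$-norms; your observation that \eqref{4.21} becomes an isometry and that Proposition \ref{86:GG} and the atomic decomposition of Proposition \ref{HFS} are already indexed by the matching family $Q(x,J)$ is exactly the bookkeeping the paper leaves implicit.
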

\begin{proof}
%It follows from \eqref{58:GG} that any $g\in \Hrdw$ induces a bounded linear functional on $\Lciw$. and 
Analogously to the arguments establishing \eqref{58:GGG} we get %particularly 
\[
    \left|\int_{\rn}  f(y)\wt{g}(y) \di y\right| 
	\le \left\|f|\Lmwe \right\|^\ast \left\|\wt{g}|\Hrdw \right\|^\ast %, \quad f\in \Lciw.
\]
for $f\in \Lciw$ and $\wt{g}\in \Hrdw$. This yields for the induced functional $g$ of $\wt{g}$
\[
  \left\|g\left|\left(\Lciw\right)'\right.\right\|^\ast\le \left\|\wt{g}|\Hrdw\right\|^\ast.
\]
%for the induced functional $g$ of $\wt{g}$.
\par
Conversely, suppose $g$ is a bounded linear functional on $\Lciw$ with norm $\left\|g\left|\left(\Lciw\right)'\right.\right\|$. Then we show by the same arguments as in the proof of Theorem \ref{TDp2:GG} that $g$ can be represented as in \eqref{represDual} where $\wt{g}$ also satisfies
\[
  \left\|\wt{g}|\Hrdw \right\|^\ast \le \left\|g\left|\left(\Lciw\right)'\right.\right\|^\ast.
\]
\end{proof}
\begin{Rem}
The unweighted case of Theorem \ref{TDp2:GG} is stated in \cite{AX12} and proved in \cite{Ros13,RoT14_2}. In dimension $n=1$ it is also proved in a periodic setting on the torus in \cite{ISY14}. Its vector-valued version is investigated in \cite{RS14}.
\end{Rem}
\begin{Corol}   %\label{T3.1}
\begin{enumerate}[label=(\roman{*}), ref=(\roman{*})]
\item
Let $1<p<\infty$, $-\frac{n}{p}< r<0$ and let $w\in A_p$.
Then $\Lciw$ and $\Lmwe$ are Banach spaces.
\item
Let $1<p<\infty$, $-n <\vr <-n/p$ and let $w\in A_{p'}$ with $p'=\frac{p}{p-1}$.
Then $\Hrow$ are Banach spaces.
\end{enumerate}
\end{Corol}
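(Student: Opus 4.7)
The plan is to derive all three completeness assertions directly from the duality theorems already proved in this section. The underlying observation is elementary: a completion is a Banach space by construction, and the dual of any normed space is automatically a Banach space. Hence it suffices to recognize each of the three spaces as either a completion or a dual, and to verify that the parameter ranges of the relevant theorem match those of the statement.

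For (i), the space $\Lciw$ is, by Definition \ref{24:GG}, the completion of $D(\rn)$ with respect to $\|\cdot|\Lmwe\|$, so it is Banach by construction. For $\Lmwe$ itself, I would invoke Theorem \ref{TDp1:GG}: the hypotheses $1<p<\infty$, $-n/p<r<0$, $w\in A_p$ are exactly those assumed, and setting $\vr:=-n-r$ (which forces $-n<\vr<-n/p'$) one verifies the weight condition $w\in A_p=A_{(p')'}$ required in Definition \ref{D2.3} for $\Hrdw$ to be well-defined. Theorem \ref{TDp1:GG} then identifies $\Lmwe$ with $(\Hrdw)'$ up to norm equivalence, so $\Lmwe$ is Banach.

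For (ii), given $1<p<\infty$, $-n<\vr<-n/p$, $w\in A_{p'}$, I would apply Theorem \ref{TDp2:GG} after the substitution $\tilde p:=p'$, $\tilde r:=-n-\vr$. The theorem's hypotheses translate to $1<\tilde p<\infty$, $-n/\tilde p<\tilde r<0$ (which is equivalent to $-n<\vr<-n/p$), $\tilde r+\vr=-n$, and $w\in A_{\tilde p}=A_{p'}$; these are exactly the present assumptions. The conclusion then identifies $H^\vr L_{\tilde p'}(w,\rn)=\Hrow$ with the dual of a certain completion space (namely $\Lciw$ formed with the tilded parameters) up to norm equivalence. Being isomorphic to a dual of a normed space, $\Hrow$ is Banach.

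The argument needs no new estimates; the only real obstacle is the bookkeeping around the involution $p\leftrightarrow p'$ and the relation $r+\vr=-n$, which have to be tracked carefully so as to land in the correct parameter regime of Theorems \ref{TDp1:GG} and \ref{TDp2:GG}. Once this is done, the result is immediate from those two theorems together with the standard facts on completions and on dual spaces.
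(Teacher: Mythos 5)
Your proposal is correct and follows essentially the same route as the paper: both recognize $\Lmwe$ and $\Hrow$ as duals of normed spaces (of $\Hrdw$ and of $\Lciw$ respectively, via Theorems \ref{TDp1:GG} and \ref{TDp2:GG}) and conclude completeness from the standard fact that dual spaces are Banach, with the same parameter bookkeeping around $r+\vr=-n$ and $p\leftrightarrow p'$. The only cosmetic difference concerns $\Lciw$: you invoke ``a completion is Banach by construction,'' whereas the paper deduces its completeness from being a closed subspace of the (already established complete) space $\Lmwe$; both justifications are valid.
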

\begin{proof}
  The spaces $\Lmwe$ and $\Hrow$ are duals of normed vector spaces ($\Hrow$ resp. $\Lciw$) and therefore complete. Since $\Lciw$ is a closed subspace of the complete space $\Lmwe$ we get also the completeness of $\Lciw$.
\end{proof}
\begin{Corol}   \label{11GIS}
Let $1<p<\infty$, $-n <\vr <-\frac{n}{p}$ and let $w\in A_{p'}$ with $p'=\frac{p}{p-1}$.
\begin{enumerate}[label=(\roman{*}), ref=(\roman{*})]
\item
If $|f|\le |g|$ almost everywhere, then $\norm{f}{\Hrow} \le c \norm{g}{\Hrow}$ where the constant $c$ is independent of $f$ and $g$.  \label{10GIS}
\item Moreover, it holds $\norm{f}{\Hrow}\cong \norm{\: |f| \; }{\Hrow}$. \label{9GIS}
\end{enumerate}
\end{Corol}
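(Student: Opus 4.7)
The approach is via the predual representation of the norm on $\Hrow$. Applying Proposition \ref{DM} with $(p,r,w)$ replaced by $(p',-n-\vr,w)$---so that the hypothesis $w\in A_p$ becomes our $w\in A_{p'}$ and $-n/p'<r<0$ with $r+\vr=-n$ matches $-n<\vr<-n/p$---one obtains, writing $r:=-n-\vr$,
\[
  \|f\,|\,\Hrow\|^{\ast} \;=\; \sup_{h}\left|\int_{\rn} f(x)\,h(x)\,\di x\right|,
\]
where the supremum is taken over all $h\in L_{-n/r,w}(\rn)$ with $\|h\,|\,L^{r}_{p'}(w,\rn)\|^{\ast}\le 1$. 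By Proposition \ref{HFS} (equation \eqref{2.10bp}), the starred norm on $\Hrow$ is equivalent to $\|\cdot\,|\,\Hrow\|$, so it suffices to prove monotonicity for $\|\cdot\|^{\ast}$ and to transfer the resulting inequality via the equivalence constant.

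For part \ref{10GIS}, assume $|f_1|\le|f_2|$ almost everywhere and fix any admissible $h$. Define the sign-twisted function $\tilde h(x):=\overline{\operatorname{sgn} f_2(x)}\,|h(x)|$, with $\operatorname{sgn} 0:=0$. Since $|\tilde h|=|h|$ pointwise almost everywhere, $\tilde h$ is again admissible: both the Morrey norm $\|\cdot\,|\,L^{r}_{p'}(w,\rn)\|^{\ast}$ and the Lebesgue norm $\|\cdot\,|\,L_{-n/r,w}(\rn)\|$ depend on $|h|$ only, so each is preserved under the twist. Moreover $f_2(x)\,\tilde h(x)=|f_2(x)||h(x)|\ge 0$, hence
\[
  \left|\int_{\rn} f_1\,h\,\di x\right|\;\le\;\int_{\rn}|f_1||h|\,\di x\;\le\;\int_{\rn}|f_2||h|\,\di x\;=\;\int_{\rn} f_2\,\tilde h\,\di x\;\le\;\|f_2\,|\,\Hrow\|^{\ast}.
\]
Taking the supremum over $h$ yields $\|f_1\,|\,\Hrow\|^{\ast}\le\|f_2\,|\,\Hrow\|^{\ast}$, and the equivalence of the two norms gives \ref{10GIS} with a uniform constant $c$.

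Part \ref{9GIS} is then an immediate consequence: apply \ref{10GIS} once to the pair $(f,|f|)$ (using $|f|\le\bigl||f|\bigr|$) and once to the pair $(|f|,f)$ (using $\bigl||f|\bigr|=|f|$); combining the two estimates yields the two-sided comparison $\|f\,|\,\Hrow\|\cong\|\,|f|\,|\,\Hrow\|$. The only mildly delicate step is the preservation of admissibility under the sign twist $h\mapsto\tilde h$, which is automatic since both governing norms see only $|h|$; measurability of $\operatorname{sgn} f_2$ follows from that of $f_2$.
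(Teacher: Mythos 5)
Your proof is correct, but it closes the argument at the decisive point by a genuinely different device than the paper. Both proofs start from the dual representation \eqref{1GIS} of $\norm{\cdot}{\Hrow}^\ast$ (Proposition \ref{DM} with the parameters transposed exactly as you indicate, combined with the equivalence \eqref{2.10bp}). The paper, however, extracts from duality only the weaker chain $\norm{f}{\Hrow}^\ast\le\sup_h\int|f|\,|h|\le\sup_h\int|g|\,h\le\norm{\;|g|\;}{\Hrow}^\ast$, i.e.\ monotonicity with the \emph{modulus} of $g$ on the right-hand side; to remove the modulus it then proves $\norm{\;|f|\;}{\Hrow}\le c\norm{f}{\Hrow}$ by a separate atomic argument: take a near-optimal decomposition $f=\sum_{J,M}h_{J,M}$, note $|f|\le\sum_{J,M}|h_{J,M}|\equiv\overline{g}$ and $\norm{\overline{g}}{\Hrow}\le 2\norm{f}{\Hrow}$ because the defining quantity \eqref{2.10b} sees only $|h_{J,M}|$; part \ref{9GIS} is established first and \ref{10GIS} deduced from it. You bypass the decomposition entirely by inserting the sign twist $\tilde h=\overline{\mathrm{sgn}\,f_2}\,|h|$ into the dual supremum, which yields the full monotonicity $\norm{f_1}{\Hrow}^\ast\le\norm{f_2}{\Hrow}^\ast$ in one stroke and makes \ref{9GIS} a formal consequence of \ref{10GIS} (the logical order is reversed relative to the paper). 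This is a real simplification; what it costs is that the supremum in \eqref{1GIS} must run over all of $L_{-n/r,w}(\rn)$ rather than only $D(\rn)$, since $\tilde h$ is not smooth, but Proposition \ref{DM} provides exactly that. Two small imprecisions, neither fatal: first, $|\tilde h|=|h|$ fails on $\{f_2=0\}$ (there $\tilde h=0$); what you need, and what holds, is $|\tilde h|\le|h|$ for admissibility together with $|f_2|\,|h|=f_2\tilde h$ almost everywhere (both sides vanish where $f_2=0$). Second, like the paper's own proof, yours implicitly assumes $f_1\in\Hrow$ when invoking \eqref{1GIS} for $f_1$, so neither argument upgrades the statement to an assertion that finiteness of the right-hand side forces membership of $f_1$ in $\Hrow$.
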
   %\label{T3.1}
\begin{proof}
Let $|f|\le |g|$ almost everywhere. 
By \eqref{1GIS} we have
\begin{align*}
  &\left\|f|\Hrow \right\|^\ast \le \sup_{h} \int_{\rn} |f(x)| |h(x)|\di x 
	\le \sup_{h} \int_{\rn} |f(x)| h(x)\di x
	\\ \le & \sup_{h} \int_{\rn} |g(x)| h(x)\di x
	= \left\|\: |g| \; |\Hrow \right\|^\ast
\end{align*}
where the supremum is taken over all $h\in L_{-\frac{n}{r},w }(\rn)$ with $\|h | \Lmwe\|^\ast\leq 1$ and which implies
\begin{\eq} \label{8GIS}
  \left\|f|\Hrow \right\| \le c_1 \left\|\: |f| \;|\Hrow \right\| \le c_2 \left\|\: |g| \;|\Hrow \right\|
\end{\eq}
Let $f\in \Hrow$ be optimally represented, that is, we assume that  
\begin{align*}   %\label{2.9}
\begin{split}
&f= \sum_{J\in \ganz, M\in \zn} h_{J,M} \quad \text{in} \quad S'(\rn) \quad \text{with} \quad \supp h_{J,M} \subset {Q_{J,M}},
%\\ &\qquad \| a_{J,M} w^{-\frac{1}{p'}} \, | L_{p} (\rn) \|\le w(Q_{J,M})^{\frac{1}{p} + \frac{\vr}{n}}, %2^{-J(\frac{n}{p} + \vr)}, 
\end{split}
\end{align*}
such that
\begin{\eq*}    %\label{2.10a}
\sum_{J\in \ganz, M \in \zn} w(Q_{J,M})^{-\left(\frac{1}{p} + \frac{\vr}{n}\right)} \| h_{J,M} w^{-\frac{1}{p'}} \, | L_{p} (\rn) \| < 2
\norm{f}{\Hrow}.
\end{\eq*}
Thus,
\[
  |f|\le \sum_{J\in \ganz, M\in \zn} |h_{J,M}| \equiv \overline{g}
\]
and 
\begin{align*}
  \norm{\overline{g}}{\Hrow}\le & \sum_{J\in \ganz, M \in \zn} w(Q_{J,M})^{-\left(\frac{1}{p} + \frac{\vr}{n}\right)} \| h_{J,M} w^{-\frac{1}{p'}} \, | L_{p} (\rn) \| 
	\\ < & 2 \norm{f}{\Hrow}.
\end{align*}
Using \eqref{8GIS} it follows
\[
  \norm{\: |f| \;}{\Hrow} \le c \norm{f}{\Hrow}
\]
and therefore \ref{9GIS}. Taking again into account \eqref{8GIS} we obtain finally \ref{10GIS}.
\end{proof}

\section{The extrapolation to Morrey spaces}
\begin{Def} 
The \textit{Hardy-Littlewood maximal operator} $M$ is given by 
	\begin{equation*} 
		(M f)(y)\equiv\sup_{Q:y\in Q} \frac{1}{|Q|} \int_{Q}\left|f(z)\right|\di z, \qquad f \in L_1^\text{loc}(\rn) 
	\end{equation*}
	where the supremum is taken over all cubes $Q$ (whose sides are parallel to the coordinate axes) in $\rn$ which contain $y\in \rn$.
\end{Def}
The following proposition establishes the boundedness of the maximal operator in the predual Morrey spaces. We will use this fact for the extrapolation result appearing in Theorem \ref{T3.3} below. Only the boundedness of the Hardy-Littlewood maximal operator (in the predual Morrey spaces) is required for its proof beside the duality. By the fact that the same arguments even give boundedness results of some singular integrals (in the predual Morrey spaces) we formulate the result in a more general version.
\begin{Prop} \label{4GIS}
Let $1<p<\infty$, $-n <\vr <-\frac{n}{p}$ and let $w\in A_{p'}$ with $p'=\frac{p}{p-1}$. 
Let $T$ be an operator such that it holds
		 \begin{\eq} \label{3GIS} 
		T:L_{p,w^{1-p}}(\rn) \hra L_{p,w^{1-p}}(\rn).
	\end{\eq}
Moreover, $T$ satisfies the representation formula
\begin{\eq} \label{2GIS}
  |(Tf)(y)|\leq c \int_{\real^n} \frac{|f(x)|}{|y-x|^n} \di x \quad %\text{ for all } f\in L_{p,w^{1-p}}(\rn)\text{ compactly supported where } y\notin \supp(f).
\end{\eq}
for all $f\in L_{p,w^{1-p}}(\rn)$ compactly supported where $y\notin \supp(f)$.
Furthermore, we assume that the operator $T$ is
\begin{enumerate}
	\item either linear \label{ToYaG}
	\item or satisfy	 
		\begin{align} \begin{split} \label{ToYG}
	  &(T (f_1 +f_2))(y) \le (T f_1)(y)+(T f_2)(y),\\ &(T f)(y) = (T(-f))(y)   \end{split}
	  	\end{align}
	  	for $f$, $f_1$, $f_2 \in L_{p,w^{1-p}}(\rn)$ and almost all $ y\in\rn$.
	\end{enumerate}
Then there is an unique continuous and bounded extension of $\wt{T}$ of $T$ to $\Hrow$ such that 
\[
  \wt{T}: \Hrow\hra\Hrow
\] 
for every $-n <\vr <-n/p$. %and every $w\in A_p$.
%  The domain of definition of $\Phi$ in \eqref{1ex} is appropriate since it holds $[w]_{A_p}\ge 1$ by means of H\"older's inequality for any $w\in A_p$.
\end{Prop}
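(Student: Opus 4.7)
The overall strategy is to establish the norm estimate on a dense subset and then extend by continuity. First, I would note that $w\in A_{p'}$ is equivalent to $w^{1-p}\in A_p$, so assumption \eqref{3GIS} combined with Rubio de Francia's extrapolation theorem yields $T:L_{u,w^{1-u}}(\rn)\hra L_{u,w^{1-u}}(\rn)$ for every $u\in(1,\infty)$ with $w^{1-u}\in A_u$. In particular one may take $u=-n/\vr\in(1,p)$: since $u<p$ one has $A_{p'}\subset A_{u'}$ and hence $w^{1-u}\in A_u$. Proposition \ref{T3.1} then guarantees both that $\Hrow\hra L_{u,w^{1-u}}(\rn)$ and that $D(\rn)$ is dense in $\Hrow$, so $T$ is unambiguously defined on $\Hrow$ and it only remains to check that it maps $D(\rn)$ into $\Hrow$ with the correct norm estimate.

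The heart of the argument is a single atomic estimate: for every $h\in L_{p,w^{1-p}}(\rn)$ with $\supp h\subset Q_{J,M}$,
\[
\|T h \,|\, \Hrow\| \;\le\; c\, w(Q_{J,M})^{-1/p-\vr/n}\,\|h\,w^{-1/p'}\,|\,L_p(\rn)\|.
\]
To obtain this I would split $Th=Th\,\chi_{2Q_{J,M}}+\sum_{k\ge 1} Th\,\chi_{2^{k+1}Q_{J,M}\setminus 2^{k}Q_{J,M}}$. The central piece is controlled directly by the weighted $L_p$-bound \eqref{3GIS} together with the doubling of $w$, producing one atomic block with the right coefficient. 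On the $k$-th annulus the size estimate \eqref{2GIS} gives $|Th(y)|\le c\,2^{-kn}|Q_{J,M}|^{-1}\|h\,|\,L_1(\rn)\|$, and H\"older's inequality yields $\|h\,|\,L_1(\rn)\|\le \|h\,w^{-1/p'}\,|\,L_p(\rn)\|\,w(Q_{J,M})^{1/p'}$.

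The main obstacle is the summation of these annular contributions. Viewed as an atom supported in $2^{k+1}Q_{J,M}$, the $k$-th piece picks up an extra factor $(w^{1-p}(2^{k+1}Q_{J,M}))^{1/p}$, which the $A_{p'}$ condition bounds by $c\,|2^{k+1}Q_{J,M}|\,w(2^{k+1}Q_{J,M})^{-1/p'}$. Combining this with the normalising factor $w(2^{k+1}Q_{J,M})^{-1/p-\vr/n}$ and with the $2^{-kn}|Q_{J,M}|^{-1}$ from the pointwise bound, the cube volumes cancel and the remaining factor is $c\,w(Q_{J,M})^{1/p'}\,w(2^{k+1}Q_{J,M})^{-1-\vr/n}\|h\,w^{-1/p'}\,|\,L_p\|$. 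Since $1+\vr/n>0$, the reverse doubling condition \eqref{RDC} forces $w(2^{k+1}Q_{J,M})^{-1-\vr/n}$ to decay geometrically in $k$, so the series sums to $c\,w(Q_{J,M})^{-1/p-\vr/n}\|h\,w^{-1/p'}\,|\,L_p\|$ as required. Inserting this atomic bound into an arbitrary decomposition $f=\sum h_{J,M}$ of $f\in D(\rn)$, using Corollary \ref{11GIS} in the sublinear case \eqref{ToYG} to pass from $|Tf|\le\sum|Th_{J,M}|$ to the corresponding $\Hrow$-estimate, and taking the infimum over decompositions yields $\|Tf\,|\,\Hrow\|\le c\,\|f\,|\,\Hrow\|$ on $D(\rn)$; the density assertion of Proposition \ref{T3.1} then furnishes the unique continuous and bounded extension $\tilde T$ to all of $\Hrow$.
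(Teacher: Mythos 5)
Your argument is essentially the paper's own proof: the same splitting of $Th_{J,M}$ into the piece on $2Q_{J,M}$ (handled by \eqref{3GIS} and the doubling estimate \eqref{DC}) and the dyadic annuli (handled by \eqref{2GIS}, H\"older's inequality, the $A_{p'}$ bound $w^{1-p}(2^{k+1}Q)^{1/p}\le c\,|2^{k+1}Q|\,w(2^{k+1}Q)^{-1/p'}$, and geometric summation via reverse doubling and $1+\vr/n>0$), followed by Corollary \ref{11GIS} and density of $D(\rn)$.

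One remark in your opening paragraph is not justified and should be dropped: Rubio de Francia extrapolation requires the hypothesis \eqref{3GIS} for \emph{all} weights in $A_p$ with constants controlled by the $A_p$ constant, whereas the proposition assumes boundedness only for the single weight $w^{1-p}$, so you cannot conclude $T:L_{u,w^{1-u}}(\rn)\hra L_{u,w^{1-u}}(\rn)$. Fortunately this step is superfluous — your atomic estimate uses only \eqref{3GIS} and \eqref{2GIS} for the given weight, and the extension of $T$ to $\Hrow$ is obtained by density from $D(\rn)$ exactly as in the paper (in the sublinear case one should also note, as the paper does, that \eqref{ToYG} gives the Lipschitz bound $\|Tf_1-Tf_2\|\le c\|f_1-f_2\|$ needed for that extension).
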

\begin{proof}
Let $f\in D(\rn) \hra \Hrow$ be optimally represented, that is, we assume that  
\begin{align}   %\label{2.9}
\begin{split} \label{TTTL}
&f= \sum_{J\in \ganz, M\in \zn} h_{J,M} \quad \text{in} \quad S'(\rn) \quad \text{with} \quad \supp h_{J,M} \subset {Q_{J,M}},
%\\ &\qquad \| a_{J,M} w^{-\frac{1}{p'}} \, | L_{p} (\rn) \|\le w(Q_{J,M})^{\frac{1}{p} + \frac{\vr}{n}}, %2^{-J(\frac{n}{p} + \vr)}, 
\end{split}
\end{align}
such that
\begin{\eq*}    %\label{2.10a}
\sum_{J\in \ganz, M \in \zn} w(Q_{J,M})^{-\left(\frac{1}{p} + \frac{\vr}{n}\right)} \| h_{J,M} w^{-\frac{1}{p'}} \, | L_{p} (\rn) \| < 2
\norm{f}{\Hrow}.
\end{\eq*}
Without loss of generality we can assume that the convergence in \eqref{TTTL} is even pointwise almost everywhere as explained at the beginning of the proof of Theorem \ref{TDp1:GG} (and as it holds for a partial sum of it).
Thus, using \eqref{ToYG} we observe that 
\begin{align} \label{7GIS} \begin{split} 
  |T(f)| &\le \sum_{J\in \ganz, M\in \zn} |T(h_{J,M})| \left(\chi_{2Q_{J,M}}+ \sum_{l\in\nat} \chi_{2^{l+1}Q_{J,M}\setminus 2^{l} Q_{J,M}} \right) \\
	 &\le \sum_{J\in \ganz, M\in \zn} \chi_{2Q_{J,M}}|T(h_{J,M})| + \sum_{J\in \ganz, M\in \zn} \sum_{l\in\nat} \chi_{2^{l+1}Q_{J,M}\setminus 2^{l} Q_{J,M}} |T(h_{J,M})|
\end{split} \end{align}
We obtain by \eqref{3GIS} and \eqref{DC}
\begin{align} \label{5GIS} \begin{split} 
  &\norm{\sum_{J\in \ganz, M\in \zn} \chi_{2Q_{J,M}}|T(h_{J,M})|}{\Hrow}
	\\ \le & \sum_{J\in \ganz, M \in \zn} w(2Q_{J,M})^{-\left(\frac{1}{p} + \frac{\vr}{n}\right)} \| \chi_{2Q_{J,M}}|T(h_{J,M})| w^{-\frac{1}{p'}} \, | L_{p} (\rn) \|
\\ \le & c_1 \sum_{J\in \ganz, M \in \zn} w(Q_{J,M})^{-\left(\frac{1}{p} + \frac{\vr}{n}\right)} \| |h_{J,M}| w^{-\frac{1}{p'}} \, | L_{p} (\rn) \| 
\\ \le & c_2 \norm{f}{\Hrow}.
\end{split}\end{align}
Furthermore,
\eqref{2GIS} and H\"older's inequality yield
\begin{align*}
  &\chi_{2^{l+1}Q_{J,M}\setminus 2^{l} Q_{J,M}} |T(h_{J,M})|
	\le \frac{c}{|2^{l}Q_{J,M}|} \int_{Q_{J,M}} |h_{J,M}(x)|\di x
	\\\le& \frac{c}{|2^{l}Q_{J,M}|} \norm{|h_{J,M}| w^{-\frac{1}{p'}}}{L_{p} (\rn)} w(Q_{J,M})^{\frac{1}{p'}}
\end{align*}
Using $w\in A_{p'}$, \eqref{RDC} and $1+\frac{\vr}{n}>0$ this implies
\begin{align} \label{6GIS} \begin{split} 
  &\norm{\sum_{J\in \ganz, M\in \zn} \sum_{l\in\nat} \chi_{2^{l+1}Q_{J,M}\setminus 2^{l} Q_{J,M}} |T(h_{J,M})|}{\Hrow}
	\\ \le & \sum_{J\in \ganz, M\in \zn} \sum_{l\in\nat} w(2^{l+1}Q_{J,M})^{-\left(\frac{1}{p} + \frac{\vr}{n}\right)} \| \chi_{2^{l+1}Q_{J,M}\setminus 2^{l} Q_{J,M}} |T(h_{J,M})| w^{-\frac{1}{p'}} \, | L_{p} (\rn) \|
\\ \le & c_1 \sum_{J\in \ganz, M \in \zn} \sum_{l\in\nat} w(2^{l+1}Q_{J,M})^{-\left(\frac{1}{p} + \frac{\vr}{n}\right)} 
\\ & \qquad \qquad \cdot 
 \frac{w(Q_{J,M})^{\frac{1}{p'}}}{|2^{l} Q_{J,M}|} \norm{|h_{J,M}| w^{-\frac{1}{p'}}}{L_{p} (\rn)} w^{1-p}(2^{l+1}Q_{J,M})^{\frac{1}{p}}
\\ \le & c_2 \sum_{J\in \ganz, M \in \zn} \left(\sum_{l\in\nat} \frac{w(Q_{J,M})^{\frac{1}{p'}+ \frac{1}{p} + \frac{\vr}{n}} }{w(2^{l+1}Q_{J,M})^{\frac{1}{p'} + \frac{1}{p} + \frac{\vr}{n}}}
\right) w(Q_{J,M})^{-\left(\frac{1}{p} + \frac{\vr}{n}\right)} \norm{|h_{J,M}| w^{-\frac{1}{p'}}}{L_{p} (\rn)} 
 %\| |h_{J,M}| w^{-\frac{1}{p'}} \, | L_{p} (\rn) \| 
\\ \le & c_3 \norm{f}{\Hrow}.
\end{split}\end{align}
Finally, the assertion holds by \eqref{7GIS}, Corollary \ref{11GIS}, \eqref{5GIS}, \eqref{6GIS} and continuous and bounded extension taking into account $D(\rn)\hra \Hrow$ dense. Here we mention that whenever $T$ is not linear the continuous and bounded extension is derived as in \eqref{FGM} below. 
\end{proof}
\begin{Thm} \label{T3.3}
Assume that for some family ${\mathcal F}$ of ordered pairs of non-negative locally integrable functions $(g,f)$% such that $f$ is compactly supported
, for some $1< p_1<\infty$ %, $-\frac{n}{p_0}\le r_0<0$ 
and every $w\in A_{p_1}$ we have
\begin{\eq} \label{1exa}
  \norm{g}{\Lpzwt}\le c_1 \norm{f}{\Lpzwt} \qquad \text{ for all } (g,f)\in {\mathcal F}.
\end{\eq}
Then for every
$1< p<\infty$, every $-\frac{n}{p} \le r<0$ and every $w\in A_{p}$ we have
\begin{\eq} \label{2exa}
  \norm{g}{\Lmwe}\le c_2 \norm{f}{\Lmwe} %\text{ for all } (g,f)\in {\mathcal F}
\end{\eq}
for all $(g,f)\in {\mathcal F}$. % where $\kappa\equiv\frac{1}{p}+\frac{1}{p-1}$.
The constants $c_1$ and $c_2$ in \eqref{1exa} and \eqref{2exa} 
do not depend on $(f,g)$ but may depend on $w$, $p_1$ and $p$.
\end{Thm}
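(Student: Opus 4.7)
The plan is to adapt the classical Rubio de Francia extrapolation argument (as in \cite[Theorem~4.6]{CMP11} and \cite{CGCMP06}) by replacing the duality with Lebesgue spaces by the Morrey/predual Morrey duality developed in Section~4, and by replacing the $L_{p'}(w)$-boundedness of the Hardy--Littlewood maximal operator by its boundedness on the predual Morrey space, which is provided by Proposition~\ref{4GIS}. Fix $(g,f)\in\mathcal{F}$ with $\|f\|_{L^r_p(w,\mathbb{R}^n)}<\infty$; the diagonal case $r=-n/p$ is the classical statement, so we may assume $-n/p<r<0$ and put $\varrho:=-n-r$.

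First I would reduce to a duality pairing. By \eqref{1:GHM} in Proposition~\ref{PDM}, together with Corollary~\ref{11GIS},
\[
\|g\|_{L^r_p(w,\mathbb{R}^n)} \;\cong\; \sup_{h}\int|g(x)|\,h(x)\,dx,
\]
where the supremum runs over nonnegative $h\in D(\mathbb{R}^n)$ with $\|h\|_{H^{\varrho}L_{p'}(w,\mathbb{R}^n)}\le 1$. It thus suffices to estimate $\int|g|\,h\,dx$ uniformly in such $h$.

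Next I would run the Rubio de Francia algorithm in the predual Morrey space. Since $w\in A_p=A_{(p')'}$, Proposition~\ref{4GIS} applied to $T=M$ gives that the Hardy--Littlewood maximal operator is bounded on $H^{\varrho}L_{p'}(w,\mathbb{R}^n)$; call its operator norm $\kappa$. Setting
\[
Rh \;:=\; \sum_{k=0}^{\infty}\frac{M^k h}{(2\kappa)^k},
\]
one has $h\le Rh$ pointwise, $\|Rh\|_{H^{\varrho}L_{p'}(w,\mathbb{R}^n)}\le 2$, and $M(Rh)\le 2\kappa\,Rh$, so $Rh$ is an $A_1$-weight with $[Rh]_{A_1}$ controlled uniformly in $h$. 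Combining $w\in A_p$ and $Rh\in A_1$ via the standard arithmetic of Muckenhoupt classes (Jones factorization), one produces a weight of the form $W:=(Rh)^{s}w^{\alpha}\in A_{p_1}$ whose $A_{p_1}$-constant is bounded in terms of $[w]_{A_p}$ and $\kappa$; the exponents $s,\alpha$ depend on whether $p\le p_1$ or $p>p_1$ and are the same as in the classical Lebesgue argument.

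Finally, H\"older's inequality matched to $W$, combined with the hypothesis \eqref{1exa} applied at exponent $p_1$ with weight $W$, transfers the pointwise inequality $\int|g|\,h\,dx\le\int|g|\,Rh\,dx$ into a bound $\le c\,\|f\|_{L_{p_1,W}(\mathbb{R}^n)}\cdot(\text{residual factor})$; a further H\"older step together with $\|Rh\|_{H^{\varrho}L_{p'}(w,\mathbb{R}^n)}\le 2$ re-expresses $\|f\|_{L_{p_1,W}}$ in terms of $\|f\|_{L^r_p(w,\mathbb{R}^n)}$. Taking the supremum over $h$ then yields \eqref{2exa}. The principal obstacle, and the essential new ingredient over the classical Lebesgue template, is the predual boundedness of $M$ encoded in Proposition~\ref{4GIS}; the remaining work is careful exponent bookkeeping parallel to \cite[Theorem~4.6]{CMP11}, but tuned so that H\"older reassembles the \emph{predual Morrey} norm of $h$ rather than a dual Lebesgue norm.
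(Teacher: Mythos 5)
Your overall strategy --- dualize against the predual Morrey space, run the Rubio de Francia algorithm there, and feed the resulting $A_1$ weight into the hypothesis --- is the right one, and you correctly single out Proposition \ref{4GIS} as the essential new ingredient. But the way you set up the pairing leaves a gap that the deferred ``exponent bookkeeping'' cannot fill. You pair $|g|$ to the \emph{first} power against $h$, so after the algorithm you hold $\int_{\rn} |g|\,Rh\,\di x=\norm{g}{L_{1,Rh}(\rn)}$, and the hypothesis (even after classical extrapolation) is only available for exponents in $(1,\infty)$, never at exponent $1$. To manufacture the exponent $p_1$ you invoke Jones factorization to build $W=(Rh)^{s}w^{\alpha}\in A_{p_1}$ and apply H\"older twice; the second H\"older step is where the argument breaks. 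To return from $\norm{f}{L_{p_1,W}(\rn)}$ to $\norm{f}{\Lmwe}$ you must pair a power of $f$ against a function built from $Rh$ and $w$ via the generalized H\"older inequality \eqref{58:GGG}, and that requires this function to lie in the relevant predual space with norm controlled by $\norm{Rh}{\Hrdw}\le 2$. For $(Rh)^{s}w^{\alpha}$ with $s\neq 1$ there is no reason for this: the predual spaces are block spaces defined by infima over decompositions, and powers and products of their elements are not controlled by powers of their norms, in contrast with the Lebesgue case where every such quantity is a plain integral of products of powers. Note also that in the classical argument the case $p<p_1$ builds the auxiliary $A_{p_1}$ weight by applying the algorithm to a function of $f$, not to the dual function $h$, so ``the same exponents as in the classical argument'' are not available from $Rh$ and $w$ alone.

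The paper removes both obstacles with one move your proposal omits: by openness of the Muckenhoupt classes choose $p_0\in(1,p)$ with $w\in A_{p/p_0}$, and dualize $g^{p_0}$ in $\LmwztN$ against $\LmwztNP$ with $\vr=-n-rp_0$. Then $\int_{\rn} g^{p_0}\,R|h|\,\di x=\norm{g}{L_{p_0,R|h|}(\rn)}^{p_0}$ with $R|h|\in A_1\subset A_{p_0}$, so the classical Lebesgue extrapolation theorem (used as a black box to pass from the hypothesis at $(p_1,A_{p_1})$ to the conclusion at $(p_0,A_{p_0})$) applies with the single weight $R|h|$ --- no factorization, no composite weight --- and the closing step is one application of \eqref{58:GGG} to $\int_{\rn} f^{p_0}\,R|h|\,\di x$, with $R|h|$ itself, whose predual norm is at most $2$, as the second factor. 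I would rework your argument along these lines.
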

\begin{proof}
If $w\in A_{p}$, then there exists a $p_0\in (1,p)$ such that even $w\in A_{p/p_0}$.
Therefore the space $\LmwztN$ is well-defined and the Hardy-Littlewood maximal operator is bounded on its predual $\LmwztNP$ where $\vr\equiv -n-r p_0$ by Proposition \ref{4GIS}.
Fix $(g,f)\in {\mathcal F}$. By the duality representation \eqref{1:GHM} we have
\begin{align*}
  \norm{g}{\Lmwe}^{p_0}= \norm{g^{p_0}}{\LmwztN}
	= &\sup_{h}\left|\int_{\rn} {g^{p_0}}(x) h(x)\di x\right| %, %\{f_j\}\in L_p^r(\ell_q,\rn)
	\\ \le & \sup_{h}\int_{\rn} {g^{p_0}}(x) |h(x)|\di x
\end{align*}
%\begin{\eq} \label{1:GHM}
  %\left\|{g}|\Lmwe\right\|=\sup_{f}\left|\int_{\rn} {g}(x) f(x)\di x\right| %, %\{f_j\}\in L_p^r(\ell_q,\rn),
%\end{\eq}
where the supremum is taken over all $h \in D(\rn)$ with $\left\|h|\LmwztNP\right\|\leq 1$. 
Therefore, it will suffice to fix such a function $h$ and show that
\[
  \int_{\rn} {g^{p_0}}(x) |h(x)|\di x \le c \norm{f}{\Lmwe}^{p_0}
\]
for a constant $c$ which does not depend on $h$.
Hence, we apply the Rubio de Francia algorithm defining 
\[
  (R|h|)(x)=\sum_{k=0}^\infty \frac{(M^k |h|) (x) }{\left(2\norm{M}{\LmwztNP\rightarrow\LmwztNP}\right)^k}
\]
%for a non-negative locally integrable function $h$ 
using the %boundedness of $M$ in $\Lmwe$ (cf. Proposition \ref{Suf}) and the 
predefinition that $M^0$ stands for the identity. 
Then we have 
\begin{enumerate}[label=(\roman{*}), ref=(\roman{*})]
	\item $|h(x)|\le (R|h|)(x)$ almost everywhere, \label{CoW}  %for almost every $x$ \label{CoW} % with respect to the weight $w$, \label{CoW}
	\item $\norm{(R|h|)(\cdot)}{\LmwztNP}\le 2 \norm{|h|}{\LmwztNP}$ and \label{GiG1}
	\item $(M(R|h|))(x) \le 2 \norm{M}{\LmwztNP\rightarrow\LmwztNP} (R|h|)(x)$ almost everywhere. \label{GiG2} %for almost every $x$ \label{GiG2} 
\end{enumerate}
For it we assume $f\in \Lmwe$ (otherwise \eqref{2exa} is trivial).
By \ref{CoW} we have that
\[
  \int_{\rn} {g^{p_0}}(x) |h(x)|\di x \le \int_{\rn} {g^{p_0}}(x) R(|h|)(x)\di x.
\]
By reason of \ref{GiG2}, $A_1\subset A_{p_0}$ and the extrapolation in the usual Lebesgue weighted spaces using \eqref{1exa} (cf. \cite{D11, CMP11, D13})) we obtain
\[
  \int_{\rn} {g^{p_0}}(x) R(|h|)(x)\di x \le c \int_{\rn} {f^{p_0}}(x) R(|h|)(x)\di x
\]
where the constant $c$ does not depend on $(g,f)$ (but may depend on $p_1$, $p_0$ and $w$).
By the generalized H\"older's inequality \eqref{58:GGG} and \ref{GiG1},
\begin{align*}
  \int_{\rn} {f^{p_0}}(x) R(|h|)(x)\di x \le & \norm{f^{p_0}}{\LmwztN} \norm{R(|h|)}{\LmwztNP}
	\\ \le & 2 \norm{f}{\Lmwe}^{p_0} \left\||h|\left|\LmwztNP\right.\right\|
	\\ \le & 2 c \norm{f}{\Lmwe}^{p_0} \norm{h}{\LmwztNP}
\end{align*}
where the last inequality follows by Corollary \ref{11GIS}. 
%the fact that 
%\[
  %\left\||h|\left|\LmwztNP\right.\right\| \cong \norm{h}{\LmwztNP} 
%\]
%which again holds by \eqref{1GIS}.
%Hence, we can define the Rubio de Francia iteration algorithm for any non-negative 
%Extrapolation in the usual Lebesgue weighted spaces $L_{\wt{p} w}(\rn)$ yields for every $1<\wt{p}<\infty$ and every $w\in A_{\wt{p}}$
%\begin{\eq} \label{1exaa}
  %\norm{g}{L_{\wt{p} w}(\rn)}\le c \norm{f}{L_{\wt{p} w}(\rn)} \qquad \text{ for all } (g,f)\in {\mathcal F}
%\end{\eq}
%where the constant $c$ does not depend on $(g,f)$ (but may depend on $\wt{p}$, $p_0$ and $w$ cf. \cite{CMP11, D13}). Our assertion is now a direct consequence of Theorem \ref{T3.2}. Indeed, let $1<p<\infty$. We choose $\wt{p}>p$ and obtain \eqref{2exa} for the fixed $p$ from Theorem \ref{T3.2} using \eqref{1exaa} (which coincides with \eqref{1ex} if $p_0\equiv \wt{p}$ and $r_0\equiv -n/\wt{p}$).
\end{proof}
\begin{Rem}
The proof adapts the results of \cite[Thm. 4.6]{CMP11} and \cite{CGCMP06} for Morrey spaces (which are not Banach function spaces) and shows that their approach for Banach function spaces using associated spaces could be used also with the dual or predual and a density relation instead of associated spaces for function spaces which are not Banach function spaces.
%\\ 
%ToDo: $(f,g)$ local integrable?
%\\
%To deduce Theorem \ref{T3.2} we generalized the ideas of \cite{D13} combined with the basic embedding given in Proposition \ref{2.8}. For an overview of extrapolation techniques we refer to \cite{CMP11}.
The extrapolation result \eqref{i1}$\Rightarrow$\eqref{i2} is stated here in pairs of functions free of operators. This setting has several advantages. For example, it allows to derive vector-valued inequalities without any further effort (cf. \cite{CMP04, CMP11, D13}). We proved \eqref{i3} in this general version. 
\par
The unweighted situation of Proposition \ref{4GIS} is proved in \cite{IST15}. We want to mention that if one wants to derive the boundedness of some singular integral using Proposition \ref{4GIS}, e.g. the Hilbert transform, one needs for satisfying \eqref{2GIS} that already its maximally truncations are studied in the appropriate weighted Lebesgue space (cf. \eqref{3GIS}). %However, the authors used implicitly Corollary \ref{11GIS} (when they used \eqref{7GIS}) without proving it which is either a consequence of duality established here or of difficult norm characterizations of the predual Morrey spaces given in \cite{AX04} (and appearing for the first time in \cite{Kal98}).
\par However, beside its preduals the maximal operator is also bounded in the weighted Morrey spaces as introduced in Definition \ref{d1:def} (cf. \cite[Thm. 3.2]{KS09}, \cite[Rem. 3]{Mu12}, \cite[Thm. 1.6.3]{Ros13} and \cite[Cor. 4.2]{KGS14}). For modifications of weighted Morrey spaces we refer to \cite{Sam14, Sam13, RSS, Saw05} and the references given there.
%(cf. Corollary \ref{T3.3}) and we moreover weakened the assumption \eqref{i1} in the sense that we can also require the boundedness of $T$ in some Morrey space (instead of \eqref{i1}) to get \eqref{i3} for some Morrey spaces with a smaller exponent with respect to integration (cf. Theorem \ref{T3.2}). %Hereby, we generalize the ideas of \cite{D13}.
\end{Rem}
\section{Mapping properties of operators}
\subsection{The main theorem}
\begin{Thm} \label{vB:thm}
	Let $T$ be an operator and suppose that for some $p_0\in (1,\infty)$ and for every $w\in A_{p_0}$ it holds
		 \begin{\eq} \label{vb2c:GGl} 
		T:L_{p_0,w}(\rn) \hra L_{p_0,w}(\rn).
	\end{\eq}
	Moreover, assume that the operator $T$ is
\begin{enumerate}
	\item either linear \label{ToYa}
	\item or satisfy	 
		\begin{align} \begin{split} \label{ToY}
	  &(T (f_1 +f_2))(y) \le (T f_1)(y)+(T f_2)(y),\\ &(T f)(y) = (T(-f))(y)   \end{split}
	  	\end{align}
	  	for $f$, $f_1$, $f_2 \in D(\rn)$ and almost all $ y\in\rn$.
	\end{enumerate}
Then, the following statements hold true.
\begin{enumerate} 
\item \label{60:GG}
There are unique continuous and bounded extensions $\wt{T}$ of $T$ to $\Lciw$ such that 
\[
  \wt{T}: \Lciw\hra\Lciw
\] 
for every $1<p<\infty$, every $-\frac{n}{p}< r<0$ and every $w\in A_p$.
\item \label{59:GG}
If $T$ is furthermore linear, then there are linear and bounded extensions $\widetilde{T}$ of $T$ to $\Lmwe$ such that 
\[ 
\wt{T}: \Lmwe\hra \Lmwe 
\] 
for every $1<p<\infty$, every $-\frac{n}{p}< r<0$ and every $w\in A_p$.
\item \label{60b:GG}
Let $1<p'<\infty$, $-n<\vr<-\frac{n}{p'}$ and let $w\in A_p$ with $p=\frac{p'}{p'-1}$.
If $T$ is furthermore linear, then the dual operator of the unique linear and bounded extension $\widetilde{T}$ of $T$ acting in $\Lciw$ satisfies \[\widetilde{T}': \Hrdw\hra\Hrdw .\] % ,\qquad 1<p'<\infty, -n<\vr<-\frac{n}{p'}.\] %acting in $\Hr$ 
%satisfy \[\left\{\widetilde{T_j}'\right\}_{j\in\nat_0}: \Hv\hra\Hv.\]
%If the extension of $T$ to $L_p(\rn)$ due to assumption \ref{vb2c:GGl} 
If $T$ is in addition formally self-adjoint with respect to \eqref{vb2c:GGl}, i.e. if we assume that
\begin{\eq} \label{selfadj} 
  \left\langle T f,g\right\rangle_{\left(L_{p_0,w},L_{p_0',w'}\right)}
  =\left\langle f,T g\right\rangle_{\left(L_{p_0,w},L_{p_0',w'}\right)} 
\quad\text{for all } f,g\in D(\rn) 
\end{\eq}
where $w'\equiv w^{1-p_0'}$,
then $\widetilde{T}'$ is the unique linear and bounded extension of $T$ acting in $\Hrdw$.
%$\overline{T}$ of $T$ to $\Hrdw$ satisfying 
%\begin{\eq}  \label{IOselfadj}
  %\overline{T}: \Hrdw\hra\Hrdw .
%\end{\eq} %\qquad 1<p'<\infty %, -n<\vr<-\frac{n}{p'}.\]
\end{enumerate}
\end{Thm}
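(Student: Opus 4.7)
My plan for part~(1) is to apply the extrapolation Theorem~\ref{T3.3} at index $p_1=p_0$ to the family $\mathcal{F}=\{(|Tf|,|f|): f\in D(\rn)\}$ --- the hypothesis \eqref{vb2c:GGl} is exactly what is needed, and $D(\rn)\subset L_{p_0,w}(\rn)$ for every $w\in A_{p_0}$ --- to obtain $\norm{Tf}{\Lmwe}\le c\,\norm{f}{\Lmwe}$ for all $f\in D(\rn)$ and all admissible $(p,r,w)$. It will still remain to verify that $Tf$ actually lies in the closed subspace $\Lciw$ and not merely in $\Lmwe$. For this I would set $u\equiv -n/r$; since $-n/p\le r<0$ forces $u\ge p$ and hence $w\in A_p\subset A_u$, the classical Rubio de Francia $A_p$-extrapolation applied to~\eqref{vb2c:GGl} delivers $T:L_{u,w}(\rn)\hra L_{u,w}(\rn)$. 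Combining the embedding $L_{u,w}(\rn)\hra\Lmwe$ from Proposition~\ref{p2.5} with the density of $D(\rn)$ in $L_{u,w}(\rn)$ shows $L_{u,w}(\rn)\subset\Lciw$, so $Tf\in\Lciw$ for every $f\in D(\rn)$. The bounded extension $\wt{T}:\Lciw\hra\Lciw$ then follows by continuity from the density $D(\rn)\subset\Lciw$; in the sublinear case I would first derive $|Tf_n-Tf_m|\le T(f_n-f_m)$ directly from the two parts of~\eqref{ToY} and apply the Morrey bound on the right-hand side to secure the Cauchy property of $\{Tf_n\}$ in $\Lmwe$.

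Parts~(2) and the boundedness statement in part~(3) are then obtained by pure duality, with no further analysis of $T$ needed. Assume $T$ is linear. Part~(1) gives $\wt{T}:\Lciw\hra\Lciw$ for every admissible $(p,r,w)$. Theorem~\ref{TDp2:GG} identifies $(\Lciw)'\cong\Hrdw$ with $r+\vr=-n$, so the Banach space adjoint yields at once $\wt{T}':\Hrdw\hra\Hrdw$, which is precisely the boundedness asserted in part~(3). Dualising once more and invoking $(\Hrdw)'\cong\Lmwe$ from Theorem~\ref{TDp1:GG} produces a bounded linear operator $(\wt{T}')':\Lmwe\hra\Lmwe$. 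Under the canonical bidual embedding $\Lciw\hra(\Lciw)''\cong\Lmwe$ one has $(\wt{T}')'|_{\Lciw}=\wt{T}$, so on $D(\rn)\subset\Lciw$ this operator coincides with $T$, supplying the required extension to the full Morrey space asserted in part~(2).

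For the self-adjoint case in part~(3), I would fix $\vp\in D(\rn)\subset\Hrdw$ and, for every $f\in D(\rn)$, read off from the definition of the adjoint together with~\eqref{selfadj} the chain $\langle f,\wt{T}'\vp\rangle=\langle\wt{T}f,\vp\rangle=\langle Tf,\vp\rangle=\langle f,T\vp\rangle$. Letting $f$ range over $D(\rn)$ and applying the uniqueness of the integral representation from Theorem~\ref{TDp1:GG} forces $\wt{T}'\vp=T\vp$ in $\Hrdw$ for every $\vp\in D(\rn)$. Since $D(\rn)$ is dense in $\Hrdw$ by Proposition~\ref{T3.1} and $\wt{T}'$ is continuous on $\Hrdw$, the operator $\wt{T}'$ is therefore the unique bounded extension of $T|_{D(\rn)}$ to $\Hrdw$. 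I expect the most delicate step of the whole programme to be the verification in part~(1) that $Tf$ lies in the closed subspace $\Lciw$ rather than only in $\Lmwe$; this hinges on choosing the Lebesgue index $u=-n/r$ so that $D(\rn)$ is dense in $L_{u,w}(\rn)$ and $L_{u,w}(\rn)\hra\Lmwe$ via Proposition~\ref{p2.5}, after which the duality machinery of Sections~3--4 runs purely formally.
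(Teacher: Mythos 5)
Your proposal is correct and follows essentially the same route as the paper: Theorem \ref{T3.3} applied to the pairs $(|Tf|,|f|)$, classical extrapolation to $L_{-n/r,w}(\rn)$ combined with Proposition \ref{p2.5} and the density of $D(\rn)$ there to place $Tf$ in $\Lciw$, the Lipschitz estimate $|Tf_1-Tf_2|\le T(f_1-f_2)$ in the sublinear case, and the duality theorems \ref{TDp1:GG} and \ref{TDp2:GG} for parts (2) and (3). The identification of $\wt{T}'$ with $T$ on $D(\rn)$ in the self-adjoint case via the pairing chain and the uniqueness of the integral representation is likewise exactly the paper's argument.
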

\begin{proof}
Using \eqref{vb2c:GGl} Theorem \ref{T3.3} with ${\mathcal F}\equiv \{ (|Tf|,|f|) \ \left| \ f\in D(\rn) \right. \}$ yields 
\begin{\eq} \label{extrapola}
  \norm{Tf}{\Lmwe}\le c \norm{f}{\Lmwe}
\end{\eq}
for all $f\in D(\rn)$, where the constant $c$ does not depend on $f$. Hence, 
$T: D(\rn) \rightarrow \Lmwe$. 
Theorem \ref{T3.3} as well as well-known extrapolation techniques cf. \cite[Thm. 1.4]{CMP11} imply
\begin{\eq} \label{extrapol}
  \norm{Tf}{L_{p,w}(\rn)}\le c \norm{f}{L_{p,w}(\rn)}
  %T:L_{p,w}(\rn) \hra L_{p,w}(\rn)
\end{\eq}
for all $1<p<\infty$ where the constant $c$ does not depend on $f\in D(\rn)$ and therefore $T: D(\rn) \rightarrow L_{-\frac{n}{r},w}(\rn)$. Because of the embedding $L_{-\frac{n}{r},w}(\rn) \hra \Lmwe$ and the density of $D(\rn)$ in $L_{-\frac{n}{r},w}(\rn)$ we even have $T: D(\rn) \rightarrow \Lciw$. 
Indeed, let $f\in D(\rn)$. Then $Tf\in L_{-\frac{n}{r},w}(\rn)$ by \eqref{extrapol} und thus there is a sequence of functions of $D(\rn)$ which tends to $Tf$ in $L_{-\frac{n}{r},w}(\rn)$ and hence in $\Lmwe$ which shows $Tf \in \Lciw$. 
 Thus, if $T$ is linear, then it holds for the unique (linear) continuous and bounded extension $\wt{T}$ of $T$ %it holds
\begin{\eq} \label{ToYb}
  \wt{T}: \Lciw\hra \Lciw.
\end{\eq}
If $T$ (is not linear but) satisfies \eqref{ToY}, then we have 
	\[
	  |(T f_1)(y)-(T f_2)(y)|\le (T (f_1-f_2))(y) %, \qquad j\in\nat_0
	\]
	for all $f_1$, $f_2\in D(\rn)$ and almost all $y\in\rn$ and hence in combination with \eqref{extrapola} %for $\{f_j\}$, $\{\widetilde{f_j}\} \in\Lciv$ with $f_j$, $\widetilde{f_j}\in D(\rn)$ for all $j\in\nat_0$
	\begin{align} \begin{split}\label{FGM}
	  \left\|T f_1-T f_2|\Lmwe\right\|&\le \left\|T (f_1-f_2)|\Lmwe\right\| \\&\le 
		 c \left\|f_1-f_2|\Lmwe\right\|.
	\end{split}
	\end{align}
	Therefore, $T$ is (Lipschitz-)continuous and we get the unique continuous and bounded extension $\wt{T}:\Lciw\hra\Lciw$ of $T$ using \eqref{FGM} (and the completeness of $\Lciw$) in the same way as in the linear case.
%using $D(\rn)\hra \Lciw$ dense. 
Whenever $T$ is linear, duality directly yields the assertions
\[ 
\wt{T}'': \Lmwe\hra \Lmwe \quad \text{  and  } \quad \widetilde{T}': \Hrdw\hra\Hrdw.
\] 
Whenever $T$ satisfies \eqref{selfadj}, then we obtain by \eqref{represDual} and $T f \in L_{-\frac{n}{r},w}(\rn)$ the identities
\begin{align} \begin{split}\label{GHJ}
&\left\langle f, \widetilde{T}'g\right\rangle_{\left(\Lciw,\Hrdw\right)}=
\left\langle \widetilde{T} f,g\right\rangle_{\left(\Lciw,\Hrdw\right)}
\\=&
\left\langle T f,g\right\rangle_{\left(\Lciw,\Hrdw\right)}
\stackrel{\eqref{represDual}}{=}\int_{\rn} (T f)(x)g(x) \di x
\\=& \left\langle T f,g\right\rangle_{\left(L_{p_0,w},L_{p_0',w'}\right)}
  \stackrel{\eqref{selfadj}}{=}\left\langle f,T g\right\rangle_{\left(L_{p_0,w},L_{p_0',w'}\right)} 
\end{split}
\end{align}
for all $f,g\in D(\rn)$ where $w'= w^{1-p_0'}$. Therefore, $\widetilde{T}'g= T g$ almost everywhere for all $g\in D(\rn)$ by the fundamental lemma of the calculus of variations. Hence, $\widetilde{T}'$ is a linear and continuous extension of $T$ to $\Hrdw$. % and therefore \eqref{IOselfadj}.
Moreover, the bidual $\widetilde{T}'' = \left(\widetilde{T}'\right)'$ is an extension of $T$ to $\Lr$. Indeed, we have %by \eqref{87:GG} and \eqref{represDual} 
\begin{align*}
&\left\langle g, \widetilde{T}''f\right\rangle_{\left(\Hrdw, \Lmwe\right)}
=\left\langle\widetilde{T}'g,f\right\rangle_{\left(\Hrdw, \Lmwe\right)}
\\=& \left\langle \widetilde{T} 'g,f\right\rangle_{\left(L_{u',\widehat{w}}(\rn),L_{u,w}(\rn)\right)}
=\int_{\rn} \widetilde{T} 'g(x) f(x) \di x
\\ \stackrel{\eqref{represDual}}{=}& \left\langle f, \widetilde{T}'g\right\rangle_{(\Lciw,\Hrdw)}
=\left\langle \widetilde{T} f,g\right\rangle_{\left(\Lciw,\Hrdw\right)}
\\ \stackrel{\eqref{GHJ}}{=} & \left\langle T f,g\right\rangle_{\left(L_{p_0,w},L_{p_0',w'}\right)}
\end{align*} 
for all $f,g\in D(\rn)$ where $u\equiv -\frac{n}{r}$ and $\widehat{w}\equiv w^{1-u'}$. % and $\widehat{w}\equiv w^{1-p_0'}$. 
Hereby, the second equality holds by Hahn-Banach's theorem ($f \in \Lmwe$ if, and only if, $f \in \Hrdw'$ if, and only if, $f \in L_{u',\widehat{w}}(\rn)'$ if, and only if, $f \in L_{u,w}(\rn)$). %and the third equality by \eqref{represDual}. 
Therefore, $\widetilde{T}'' = T$ on $D(\rn)$ using \eqref{87:GG}. 
\end{proof}
\begin{Rem}
Theorem \ref{T3.3} and Proposition \ref{p2.5} imply the inequality
\begin{\eq} \label{ZHH}
  \norm{Tf}{\Lmwe}\le c \norm{f}{\Lmwe}
\end{\eq}
and $Tf\in \Lciw$ for all $f\in D(\rn)$ where the constant $c$ does not depend on $f$ without any further assumption on $T$ except \eqref{vb2c:GGl}.
  The assumptions that $T$ is linear or that $T$ satisfies \eqref{ToY} are just used to obtain \eqref{ToYb} from \eqref{ZHH}.
\end{Rem}
\subsection{Distinguished examples}
\subsubsection{Calder\'{o}n-Zygmund operators and its maximal truncations}
\begin{Corol} \label{CvB2:thm}
Let $1<p<\infty$, $-\frac{n}{p}< r<0$, $-n < \vr < -\frac{n}{p'}$ and let $w\in A_p$. Let $T$ be an operator with domain $D(\rn)$ satisfying
  \[
		\left\|Tf|L_{2}(\rn)\right\|	\leq c_1 \left\|f|L_{2}(\rn)\right\|
	\] 
	where the constant $c_1$ is independent of $f\in D(\rn)$ and suppose that
	\[ %\begin{\eq} \label{2_59:GG}
	  (T f)(y) = \lim_{\varepsilon \searrow 0} \int_{x\in\real^n, |y-x|\ge \varepsilon} K(y,x) f(x)\di x  
	\] %\end{\eq}
	almost everywhere for all $ f\in D(\rn)$, where the function $K(\cdot,\cdot)$ defined $\rn \times \rn \setminus \{(x,x):x\in\rn\} $ satisfies the conditions $|K(x,y)|\leq c_2 |x-y|^{-n}$ and 
\begin{align*}
  &|K(x,y)-K(x',y)|\leq c_2 \frac{|x-x'|^\delta}{(|x-y|+|x'-y|)^{n+\delta}},~\\&\qquad
  \text{ whenever }~ 2|x-x'|\le \max(|x-y|,|x'-y|), \\
  &|K(x,y)-K(x,y')|\leq c_2 \frac{|y-y'|^\delta}{(|x-y|+|x-y'|)^{n+\delta}},~\\&\qquad
	\text{ whenever }~ 2|y-y'|\le \max(|x-y|,|x-y'|).
\end{align*}
The maximal truncation of $T$ is given by 
\[
   (T^{(\ast)} f)(y) = \sup_{\varepsilon > 0} \left|\int_{x\in\real^n, |y-x|> \varepsilon} K(y,x) f(x)\di x \right|.
\]
Then the following statements hold true.
\begin{enumerate} 
\item \label{as1}
There are linear and bounded extensions of $T$ acting in $\Lmwe$. 
\item \label{as2}%If in addition $T:D(\rn)\rightarrow \text{Lip}(\rn)$, then t
There are unique continuous and bounded extensions of $T$ and $T^{(\ast)}$ acting in $\Lciw$. 
\item \label{as3}
%The dual of the unique linear and bounded extension of $T$ acting in 
There is an unique linear and bounded extension of $T$ acting in \\$\Hrdw$.
\end{enumerate}
\end{Corol}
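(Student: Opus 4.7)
The strategy is to reduce everything to the main theorem (Theorem \ref{vB:thm}) after checking the required weighted $L_{p_0}$-bounds from classical Calderón--Zygmund theory, with one additional duality step for \ref{as3}.

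First I would record that the hypotheses on $K$ (size bound and two-sided Hörmander-type smoothness) together with the $L_2$-boundedness of $T$ imply, by the classical theory of singular integrals with Muckenhoupt weights, that
\[
T : L_{p_0,w}(\rn) \hra L_{p_0,w}(\rn) \qquad \text{for every } p_0 \in (1,\infty) \text{ and every } w \in A_{p_0}.
\]
A Cotlar-type pointwise inequality then transfers the analogous boundedness to the maximal truncation $T^{(\ast)}$. Next, $T$ is linear by hypothesis while $T^{(\ast)}$ satisfies \eqref{ToY}, since $T^{(\ast)}(-f) = T^{(\ast)}f$ and the sublinearity $T^{(\ast)}(f_1+f_2)(y) \le T^{(\ast)}f_1(y) + T^{(\ast)}f_2(y)$ are immediate from the supremum in the definition. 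Theorem \ref{vB:thm} therefore applies to both operators: its part (\ref{59:GG}) applied to $T$ gives \ref{as1}, and its part (\ref{60:GG}) applied in turn to $T$ and to $T^{(\ast)}$ gives \ref{as2}.

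For \ref{as3} I would introduce the formal adjoint $T^{\ast}$ of $T$, whose kernel is $K^{\ast}(y,x) = K(x,y)$. The stated smoothness and size conditions on $K$ are symmetric in the two variables, so $K^{\ast}$ is again an admissible Calderón--Zygmund kernel and $T^{\ast}$ is $L_2$-bounded by self-duality of $L_2$. Thus the previous steps apply to $T^{\ast}$: it is linear, bounded on $L_{p_0,w}(\rn)$ for every $w \in A_{p_0}$, and Theorem \ref{vB:thm} (\ref{60:GG}) yields a unique bounded linear extension $\widetilde{T^{\ast}}: \Lciw \hra \Lciw$. Passing to duals (Theorem \ref{vB:thm} (\ref{60b:GG}), or equivalently Theorem \ref{TDp2:GG}) produces $(\widetilde{T^{\ast}})': \Hrdw \hra \Hrdw$. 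To see that this dual extends $T$ and not merely $T^{\ast}$, I would compute for $f,g \in D(\rn)$
\begin{align*}
\langle f, (\widetilde{T^{\ast}})' g\rangle_{(\Lciw,\Hrdw)}
&= \langle \widetilde{T^{\ast}} f, g\rangle_{(\Lciw,\Hrdw)}
 = \int_{\rn} (T^{\ast} f)(x)\, g(x)\, \di x \\
&= \int_{\rn} f(x)\, (Tg)(x)\, \di x,
\end{align*}
where the integral representations are licit by Theorem \ref{TDp1:GG} and Proposition \ref{PDM}. The fundamental lemma of the calculus of variations (mimicking the self-adjoint case inside the proof of Theorem \ref{vB:thm} (\ref{60b:GG})) then forces $(\widetilde{T^{\ast}})' g = T g$ almost everywhere for every $g \in D(\rn)$. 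Uniqueness of the extension is automatic because $D(\rn)$ is dense in $\Hrdw$ by Proposition \ref{T3.1}.

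The heavy lifting is done by Theorem \ref{vB:thm}; the only real bookkeeping is the verification that $K^{\ast}$ satisfies the same Calderón--Zygmund conditions and that the dual operator $(\widetilde{T^{\ast}})'$, initially constructed as an element of $\mathcal{L}(\Hrdw)$, genuinely extends $T$ (rather than $T^{\ast}$) on test functions. I expect this adjoint identification to be the most delicate step, but it is routine once the duality machinery of Sections 4--5 is available.
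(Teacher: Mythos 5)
Your proposal is correct and follows essentially the same route as the paper: weighted $L_{p,w}$-boundedness of $T$ and $T^{(\ast)}$ from classical Calder\'{o}n--Zygmund theory (the paper cites \cite[Thm.~9.4.6, Cor.~9.4.7]{G09}) feeds into Theorem \ref{vB:thm} for Parts \ref{as1} and \ref{as2}, and Part \ref{as3} is obtained by running the argument for the adjoint/transpose kernel and identifying its dual with $T$ on $D(\rn)$. The only cosmetic difference is that the paper writes the adjoint kernel as $\ol{K(y,x)}$ while you use the transpose $K(x,y)\mapsto K(y,x)$ adapted to the bilinear pairing; both satisfy the same hypotheses, so nothing is affected.
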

\begin{proof}
  The boundedness of $T$ in $L_{p,w}(\rn)$ for all $w\in A_p$ and all $1<p<\infty$ is covered e.g. by \cite[Thm. 9.4.6, Cor. 9.4.7]{G09} which yields Part \ref{as1} and Part \ref{as2}. The adjoint kernel of $K(x,y)$ given by $\ol{K(y,x)}$ also satisfies the required assumptions on the kernel. Hence, its corresponding operator is also bounded in $L_{p,w}(\rn)$ for all $w\in A_p$ and all $1<p<\infty$ (cf. \cite[Def. 8.1.2]{G09}) but its dual coincides with the operator $T$ (with the kernel $K(x,y)$) on $D(\rn)$ which implies Part \ref{as3}.
\end{proof}
\begin{Rem}
The well-definedness of $T^{(\ast)}$ holds by the fact that
\[
  \left|\int_{x\in\real^n, |y-x|> \varepsilon} K(y,x) f(x)\di x \right|, \qquad f\in D(\rn)
\]
is bounded for each $\ve>0$ and $y\in \rn$ as a consequence of H\"older's inequality. Hence, $(T^{(\ast)} f)(y)$ is well-defined for all $y\in\real$, but might be infinite.
The boundedness of Calder\'{o}n-Zygmund operators are crucial for an investigation of Navier-Stokes equations. We refer to the recent books of Triebel \cite{Tr12, T-HS} where Navier-Stokes equations has been investigated in the context of unweighted (Morrey-type) Besov-Triebel-Lizorkin spaces. As a further key ingredient he used wavelet characterizations for the corresponding spaces (cf. \cite{R}).
\end{Rem}
\subsubsection{H\"ormander-Mikhlin type multipliers}
We denote the Fourier transform of $f$ on $S(\rn)$ or $S'(\rn)$ 
by $\hat{f}$ and its inverse by $\check{f}$ where the normalisation of $\hat{f}$ does not matter for our purposes.
\begin{Corol} \label{HM:thm}
Let $1<p<\infty$, $-\frac{n}{p}< r<0$, $-n < \vr < -\frac{n}{p'}$ and let $w\in A_p$. Let $m\in C^n(\rn\setminus\{0\})$ which satisfies 
\[
  \sup_{R>0} \left( R^{s|\alpha|-n} \sup_{R<|x|<2R} |D^\alpha m(x)| \right)^\frac{1}{s}<\infty \quad \text{  for all  } |\alpha|\le n
\]
where $1<s\le 2$ and $|\alpha|=\alpha_1+\ldots+\alpha_n$ is a multi-index of non-negative integers $\alpha_j$.
Let $T_m$ be the operator defined by 
\begin{\eq} \label{FT_L2}
(T_m f)\,\hat{ }\,(\xi)=m(\xi) \hat{f}(\xi), ~~f\in S(\real),~ \xi\in\real. 
\end{\eq}
Then the following statements hold true.
\begin{enumerate} 
\item %\label{2_60:GG}
There are linear and bounded extensions of $T_m$ acting in $\Lmwe$. 
\item %\label{2_60b:GG}
%If in addition $T:D(\rn)\rightarrow \text{Lip}(\rn)$, then t
There is an unique linear and bounded extension of $T_m$ acting in \\$\Lciw$. 
\item
There is an unique linear and bounded extension of $T_m$ acting in \\$\Hrdw$.
\end{enumerate}
\end{Corol}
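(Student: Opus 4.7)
The plan is to invoke the main theorem (Theorem \ref{vB:thm}) applied to the linear operator $T_m$; all three assertions will then follow essentially as corollaries. Linearity of $T_m$ is immediate from \eqref{FT_L2}, so the only real input needed is the verification of hypothesis \eqref{vb2c:GGl}, namely that $T_m : L_{p_0, w}(\rn) \hra L_{p_0, w}(\rn)$ for some $p_0 \in (1,\infty)$ and every $w \in A_{p_0}$.

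First I would invoke the classical weighted Hörmander--Mikhlin multiplier theorem (in the Kurtz--Wheeden / Calderón--Torchinsky form, see e.g.\ \cite{G09}) under the stated $L^s$-type smoothness condition on $m$ with $1 < s \le 2$. This yields the boundedness of $T_m$ on $L_{p_0, w}(\rn)$ for $p_0$ in a suitable range and $w$ in the appropriate Muckenhoupt class; choosing $p_0$ sufficiently large one secures \eqref{vb2c:GGl} for every $w \in A_{p_0}$. With this in hand, assertion (ii) follows directly from Theorem \ref{vB:thm}\eqref{60:GG} and assertion (i) from Theorem \ref{vB:thm}\eqref{59:GG}, since in both cases $T_m$ is linear.

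For assertion (iii), the crucial observation is that the symbol $\bar m$ satisfies exactly the same Hörmander--Mikhlin condition as $m$, so the formal $L^2$-adjoint operator $T_{\bar m}$ also satisfies hypothesis \eqref{vb2c:GGl}. Applying Theorem \ref{vB:thm}\eqref{60:GG} to $T_{\bar m}$ with the parameters dual to those fixed in (iii) (that is, with $p$ in place of $p'$ and the Muckenhoupt weight $w \in A_p$ now playing the role of the original weight) yields a unique continuous and bounded linear extension of $T_{\bar m}$ acting in the corresponding space $\overset{\circ}{L}{}^{r}_{p}(w,\rn)$. Its Banach-space dual is bounded on $\Hrdw$ by Theorem \ref{vB:thm}\eqref{60b:GG}. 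Using the Parseval identity $\langle T_m f, g\rangle_{L_2} = \langle f, T_{\bar m} g\rangle_{L_2}$ for $f, g \in D(\rn)$ together with the integral representation \eqref{represDual}, the argument \eqref{GHJ} identifies this dual with $T_m$ on $D(\rn)$; hence it is an extension of $T_m$. Uniqueness then follows from the density of $D(\rn)$ in $\Hrdw$ established in Proposition \ref{T3.1}.

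The main obstacle is purely bookkeeping: one must verify that the range of $p_0$ and weights $w$ delivered by the Kurtz--Wheeden theorem is rich enough to feed into Theorem \ref{vB:thm}, and, for assertion (iii), that the dual parameters in the application to $T_{\bar m}$ indeed match those needed for the target space $\Hrdw$. Once these parameters are aligned, the functional-analytic skeleton of the argument is routine given the main theorem.
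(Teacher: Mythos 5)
Your proposal is correct and follows essentially the same route as the paper: the paper's proof is a one-line citation of Kurtz--Wheeden for the weighted $L_{p,w}$ boundedness, with the three assertions then following from Theorem \ref{vB:thm}, and for part (iii) it uses exactly your adjoint-multiplier device (spelled out explicitly in the neighbouring Calder\'on--Zygmund and Marcinkiewicz corollaries). The only cosmetic remark is that with the paper's bilinear pairing \eqref{represDual} the relevant adjoint symbol is $m(-\cdot)$ rather than $\bar m$, but both satisfy the same H\"ormander--Mikhlin condition, so nothing changes.
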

\begin{proof}
  The boundedness of $T_m$ in $L_{p,w}(\real)$ for all $w\in A_p$ and all $1<p<\infty$ is covered e.g. by \cite[Thm. 1]{KW79}. 
	%and the reference given there (see also \cite[Thm. 5.2.2]{G08}). Moreover, we observe $T_m'=T_{m(-\cdot)}$ on $D(\rn)$ and whenever $m$ has uniformly bounded variation on each of the dyadic sets $(-2^{j+1},-2^j)\bigcup (2^j,2^{j+1})$, $j\in\ganz$, then $m(-\cdot)$ has also uniformly bounded variation on these sets.
\end{proof}
\subsubsection{Marcinkiewicz multipliers}
%We denote the Fourier transform of $f$ on $S(\rn)$ or $S'(\rn)$ 
%by $\hat{f}$ and its inverse by $\check{f}$ where the normalisation of $\hat{f}$ does not matter for our purposes. %estimates. Moreover, we mention that the 
\begin{Corol} \label{CvB1:thm}
Let $1<p<\infty$, $-\frac{1}{p}< r<0$, $-1 < \vr < -\frac{1}{p'}$ and let $w\in A_p$. Let $m$ be a bounded function which has uniformly bounded variation on each of the dyadic sets $(-2^{j+1},-2^j)\bigcup (2^j,2^{j+1})$, $j\in\ganz$, in $\real$. Let $T_m$ be the operator defined analogously to \eqref{FT_L2}.
%by 
%\begin{\eq} \label{FT_L2}
%(T_m f)\,\hat{ }\,(\xi)=m(\xi) \hat{f}(\xi), ~~f\in S(\real),~ \xi\in\real. 
%\end{\eq}
Then the following statements hold true.
\begin{enumerate} 
\item %\label{2_60:GG}
There are linear and bounded extensions of $T_m$ acting in $\eLmwe$. 
\item %\label{2_60b:GG}
%If in addition $T:D(\rn)\rightarrow \text{Lip}(\rn)$, then t
There is an unique linear and bounded extension of $T_m$ acting in \\$\eLciw$. 
\item
There is an unique linear and bounded extension of $T_m$ acting in \\$\eHrdw$.
\end{enumerate}
\end{Corol}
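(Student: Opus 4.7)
The plan is to reduce everything to the main Theorem \ref{vB:thm} by establishing the Muckenhoupt-weighted $L_{p,w}(\real)$ boundedness of the Marcinkiewicz multiplier $T_m$ for every $1<p<\infty$ and every $w\in A_p$. Once this single input is available, the structure of the argument is a verbatim copy of that of Corollary \ref{HM:thm}, only replacing the Hörmander--Mikhlin weighted multiplier theorem of Kurtz--Wheeden by the classical weighted Marcinkiewicz multiplier theorem (due to Kurtz), which asserts exactly that the stated conditions on $m$ imply
\[
  \| T_m f \, | L_{p,w}(\real) \| \le c \, \| f \, | L_{p,w}(\real) \|
  \qquad (1<p<\infty,\; w\in A_p).
\]

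First I would invoke this weighted $L_p$ bound with some fixed $p_0\in(1,\infty)$, together with the linearity of $T_m$ (which is evident from \eqref{FT_L2}), to put $T_m$ into the setting of Theorem \ref{vB:thm}. Part (i) then follows from Theorem \ref{vB:thm}\ref{59:GG} and part (ii) from Theorem \ref{vB:thm}\ref{60:GG}, yielding, respectively, a linear bounded extension $\widetilde{T_m}\colon \eLmwe \hra \eLmwe$ and the unique continuous bounded extension $\widetilde{T_m}\colon \eLciw \hra \eLciw$. Uniqueness in part (ii) is automatic from the density of $D(\real)$ in $\eLciw$ built into Definition \ref{24:GG}.

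For part (iii) my plan is to invoke Theorem \ref{vB:thm}\ref{60b:GG}, but applied to the conjugate multiplier $T_{\bar m}$ rather than to $T_m$ directly: the symbol $\bar m$ is bounded with uniformly bounded variation on the same dyadic sets as $m$, so Kurtz's theorem applies and $T_{\bar m}$ also satisfies \eqref{vb2c:GGl} for every $w\in A_{p_0}$. Hence there is a unique bounded linear extension $\widetilde{T_{\bar m}}\colon \eLciw \hra \eLciw$, and its dual gives
\[
  \widetilde{T_{\bar m}}{}'\colon \eHrdw \hra \eHrdw .
\]
To identify this dual with an extension of $T_m$, I would verify the Parseval-type identity $\langle T_m f, g\rangle_{L_2} = \langle f, T_{\bar m} g\rangle_{L_2}$ on $S(\real)$ (this is the only concrete computation in the argument and follows from \eqref{FT_L2} and Plancherel), and then, for $f,g\in D(\real)$, chain it with the duality $\langle \widetilde{T_{\bar m}}{}' f, g\rangle_{(\eHrdw, \eLciw)} = \langle f, \widetilde{T_{\bar m}} g\rangle_{(\eHrdw, \eLciw)}$ exactly in the spirit of \eqref{GHJ} to obtain $\widetilde{T_{\bar m}}{}' f = T_m f$ almost everywhere for every $f\in D(\real)$. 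Uniqueness of the extension to $\eHrdw$ is then a consequence of the density of $D(\real)$ in $\eHrdw$ given by Proposition \ref{T3.1}.

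The only real obstacle is a bookkeeping one: one must keep track of which operator is dualized and make sure that the conjugate symbol $\bar m$ really falls under the same Marcinkiewicz hypothesis (it does, trivially, since boundedness and bounded variation are preserved by complex conjugation). No new analytic input beyond Kurtz's weighted multiplier theorem and the machinery already developed in Sections 4--6 is required.
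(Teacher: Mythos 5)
Your overall strategy is exactly the paper's: quote the weighted Marcinkiewicz multiplier theorem (Kurtz; the paper cites \cite[Thm.\ 8.35]{D01} and \cite[Thm.\ 5.2.2]{G08}) to get $T_m:L_{p,w}(\real)\hra L_{p,w}(\real)$ for all $1<p<\infty$ and $w\in A_p$, and then feed the linear operator $T_m$ into Theorem \ref{vB:thm}. Parts (i) and (ii) of your argument coincide with the paper's proof.

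In part (iii), however, you identify the wrong auxiliary multiplier. The duality underlying Theorems \ref{TDp1:GG}, \ref{TDp2:GG} and \ref{vB:thm}\ref{60b:GG} is the \emph{bilinear} pairing $\langle f,g\rangle=\int_{\real} f(x)g(x)\,\di x$ (no complex conjugation; see \eqref{87:GG}, \eqref{represDual}, \eqref{selfadj}, \eqref{GHJ}). With respect to this pairing the transpose of $T_m$ on $D(\real)$ is $T_{m(-\cdot)}$, since $\int (T_m f)g\,\di x=\int m(\xi)\hat f(\xi)\hat g(-\xi)\,\di\xi=\int f\,(T_{m(-\cdot)}g)\,\di x$. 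Your identity $\langle T_mf,g\rangle_{L_2}=\langle f,T_{\ol m}g\rangle_{L_2}$ is correct only for the sesquilinear inner product; chaining it with the bilinear duality $\langle \wt{T_{\ol m}}{}'f,g\rangle_{(\eHrdw,\eLciw)}=\langle f,\wt{T_{\ol m}}g\rangle_{(\eHrdw,\eLciw)}$ mixes the two pairings, and what you actually obtain is $\wt{T_{\ol m}}{}'f=T_{\ol{m}(-\cdot)}f$, which equals $T_mf$ only when $\ol{m(-\xi)}=m(\xi)$. The repair is immediate and is what the paper does: take the auxiliary operator to be $T_{m(-\cdot)}$, observing that $m(-\cdot)$ is bounded and has uniformly bounded variation on the same symmetric dyadic sets $(-2^{j+1},-2^j)\cup(2^j,2^{j+1})$, $j\in\ganz$, so the weighted Marcinkiewicz theorem applies to it as well; then $\wt{T_{m(-\cdot)}}{}':\eHrdw\hra\eHrdw$ extends $T_m$, and uniqueness follows from the density of $D(\real)$ in $\eHrdw$ (Proposition \ref{T3.1}), as you say. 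No other input is missing.
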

\begin{proof}
  The boundedness of $T_m$ in $L_{p,w}(\real)$ for all $w\in A_p$ and all $1<p<\infty$ is covered e.g. by \cite[Thm. 8.35]{D01} and the reference given there (see also \cite[Thm. 5.2.2]{G08}). Moreover, we observe $T_m'=T_{m(-\cdot)}$ on $D(\rn)$ and whenever $m$ has uniformly bounded variation on each of the dyadic sets $(-2^{j+1},-2^j)\bigcup (2^j,2^{j+1})$, $j\in\ganz$, then $m(-\cdot)$ has also uniformly bounded variation on these sets.
\end{proof}
\begin{Rem}
We want to mention that $T_m$ is well-defined by \eqref{FT_L2} since $m \hat{f} \in L_2(\rn)$ for $f\in S(\real)$.
The assumption on $m$ is in particular satisfied if $m$ is a bounded function which is continuously differentiable on $(-2^{j+1},-2^j)\bigcup (2^j,2^{j+1})$, $j\in\ganz$, satisfying 
\[
  \sup_{j\in\ganz} \left[\int_{(-2^{j+1},-2^j)} |m'(\xi)|\di \xi + \int_{(2^j,2^{j+1})} |m'(\xi)|\di \xi \right] <\infty.
\]
\end{Rem}
\subsubsection{The maximal Carleson operator}
\begin{Corol} %\label{CvB1:thm}
Let $1<p<\infty$, $-\frac{1}{p}< r<0$ and let $w\in A_p$.  
%Let $T_m$ be the operator defined by 
Let the maximal Carleson operator be defined as
\[ %\begin{\eq} \label{deCa}
	C_* (f)(x)\equiv \sup_{\varepsilon>0} \sup_{\xi\in\real} \left|\int_{|x-y|>\varepsilon} \frac{f(y)e^{2\pi i\xi y }}{x-y}dy\right|%=\sup_{\xi\in\real}\left|H_*(M^\xi(f))(x)\right|
	, ~~f\in S(\real),~ \xi\in\real. 
\] %end{\eq}
Then there is an unique continuous and bounded extension of $C_*$ acting in $\eLciw$.
\end{Corol}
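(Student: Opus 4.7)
The plan is to apply Theorem~\ref{vB:thm}\eqref{60:GG} with $T=C_*$ in dimension $n=1$. Note that the hypothesis $-\frac{1}{p}<r<0$ of the corollary is exactly the one-dimensional version of $-\frac{n}{p}<r<0$ required there, so the parameter range is compatible. What must be checked are the two structural hypotheses of the theorem: the sublinearity/symmetry condition \eqref{ToY}, and the weighted $L_{p_0,w}$-boundedness \eqref{vb2c:GGl}.

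First I would verify \eqref{ToY}. Writing $C_{\varepsilon,\xi}f(x)\equiv\int_{|x-y|>\varepsilon}\frac{f(y)e^{2\pi i\xi y}}{x-y}\,dy$ for the underlying family of linear truncated modulated Hilbert transforms, we have $C_*(f)(x)=\sup_{\varepsilon>0,\,\xi\in\real}|C_{\varepsilon,\xi}f(x)|$. Linearity of each $C_{\varepsilon,\xi}$ combined with the triangle inequality for $|\cdot|$, applied pointwise inside the supremum, gives $C_*(f_1+f_2)(x)\le C_*(f_1)(x)+C_*(f_2)(x)$ for all $f_1,f_2\in D(\real)$ and almost every $x\in\real$; the evenness $C_*(-f)=C_*(f)$ is immediate from $|{-z}|=|z|$. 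Thus \eqref{ToY} holds.

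Second, I would invoke the weighted Carleson--Hunt theorem: for every $1<p_0<\infty$ and every $w\in A_{p_0}$ there is a constant $c=c([w]_{A_{p_0}},p_0)$ with
\[
\norm{C_* f}{L_{p_0,w}(\real)}\le c\,\norm{f}{L_{p_0,w}(\real)},\qquad f\in D(\real),
\]
which yields \eqref{vb2c:GGl} for $T=C_*$ at the chosen $p_0$ (one may take $p_0=2$, $w\in A_2$). With both hypotheses in place, Theorem~\ref{vB:thm}\eqref{60:GG} produces the desired unique continuous and bounded extension $\wt{C_*}:\eLciw\hra\eLciw$.

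The only deep input is the weighted Carleson--Hunt estimate; the rest is a direct application of the abstract machinery. Conceptually, the main obstacle that made part~\eqref{60:GG} of Theorem~\ref{vB:thm} necessary rather than the simpler linear duality-based part~\eqref{59:GG} is precisely that $C_*$ is only sublinear: linearity fails, so one cannot pass to duals, but the sublinear extension mechanism via the Lipschitz estimate \eqref{FGM} still delivers a unique continuous extension on the predual-style closure $\eLciw$.
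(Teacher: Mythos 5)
Your proposal is correct and follows essentially the same route as the paper: the paper's proof consists of citing the weighted Carleson--Hunt theorem (Grafakos, \emph{Modern Fourier Analysis}, Thm.\ 11.3.3) for the $L_{p,w}(\real)$-boundedness and then implicitly applying Theorem \ref{vB:thm}\eqref{60:GG} via the sublinearity condition \eqref{ToY}. You merely spell out the verification of \eqref{ToY} explicitly, which the paper leaves implicit.
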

\begin{proof}
  The boundedness of $C_*$ in $L_{p,w}(\real)$ for all $w\in A_p$ and all $1<p<\infty$ is covered by \cite[Thm. 11.3.3]{G09}.
\end{proof}
\begin{Rem}
The well-definedness of $C_*$ holds because of the fact that
\[
  \left|\int_{|x-y|>\varepsilon} \frac{f(y)e^{2\pi i\xi y }}{x-y}dy\right|
\]
is bounded for each $\ve>0$ and $x\in \real$ as a consequence of H\"older's inequality. Hence, $C_* (f)(x)$ is well-defined for all $x\in\real$, but might be infinite.
\end{Rem}
\subsubsection{Commutators}
\begin{Rem}
	The commutator of a singular integral operator $T$ with a function
	$b$ is defined as
	\[
		[b,T](f) = bT(f) - T(bf) \quad\text{ for } f\in D(\rn).
	\]
	If $1<p<\infty$, $w\in A_p$ and $b\in BMO$, then $[b,T]$ is bounded in $\Lpw$.
	For detailed definitions and references we refer to \cite[page 62]{CMP11}.
  The unweighted case one finds also in \cite[Thm. 7.5.6]{G09}.
	Therefore the appropriate assertions of Corollary \ref{HM:thm} holds also for these commutators.
\end{Rem}
\subsubsection{Vector-valued boundednesses}
\begin{Rem}
All the mentioned results hold also in the vector-valued situation since Muckenhoupt weighted boundednesses imply by extrapolation its (weighted and unweighted) vector-valued boundednesses (cf. \cite[page 22]{CMP11}). Let $T_m$ be defined as in Corollary \ref{HM:thm}. Then by classical extrapolation we obtain
\[ 
  \norm{\left(\sum_{j\in\nat} |T_m f_j|^q\right)^\frac{1}{q}}{\Lpw} 
	\le c \norm{\left(\sum_{j\in\nat} |f_j|^q\right)^\frac{1}{q}}{\Lpw}
\]
for any $1<p,q<\infty$ and any $w\in A_p$. Hence, the assumption \eqref{1exa} of Theorem \ref{T3.3} is fulfilled for the following pairs of functions
\[
  \left(\left(\sum_{j\in\nat} |T_m f_j|^q\right)^\frac{1}{q}, \left(\sum_{j\in\nat} |f_j|^q\right)^\frac{1}{q}\right)
\]
for all sequences $\{f_j\}$ where $f_j \in D(\rn)$ and only a finite number of functions $f_j$ are not identically zero. % and satisfy $f_j \in D(\rn)$. 
By \eqref{2exa} we deduce then 
\[ 
  \norm{\left(\sum_{j\in\nat} |T_m f_j|^q\right)^\frac{1}{q}}{\Lmwe} 
	\le c \norm{\left(\sum_{j\in\nat} |f_j|^q\right)^\frac{1}{q}}{\Lmwe}
\]
for any $1<p,q<\infty$, any $-n/p<r<0$, any $w\in A_p$ and all sequences $\{f_j\}$ where $f_j \in D(\rn)$ and only a finite number of functions $f_j$ are not identically zero. The operator defined by 
$T (\{f_j\})\equiv \left(\sum_{j\in\nat} |T_m f_j|^q\right)^\frac{1}{q}$ can now be extended by the appropriate vector-valued duality assertions which are proved for the unweighted situation in \cite{RS14} and which lead to the appropriate results of Corollary \ref{HM:thm} in the unweighted vector-valued situation. In the same manner one would get these results also for the other operators where we dealt with $T_m$ as a model case. The vector-valued assertions of these H\"ormander-Mikhlin multipliers lead to the appropriate results in Morrey smoothness spaces of Besov-Triebel-Lizorkin type (c.f. \cite{R} for definitions). Taking those one can generalize different results on PDEs (Navier-Stokes equations, \ldots) in the recent book of Triebel \cite{T-HS} where he just dealt with model cases relying on the vector-valued boundedness of the Riesz transform. 
\end{Rem}
%\section{Further applications of duality}
\section{Associated spaces}
\begin{Def} Let $1<p<\infty$, $-\frac{n}{p}< r<0$, $r+\vr=-n$ and let $w\in A_p$.
\begin{enumerate} 
\item
The \textit{associated space of }$\Lmwe$ is given by the norm 
\begin{\eq*} %\label{1:GHM}
  \norm{f}{\Lmwe^{\$}}\equiv\sup_{g}\int_{\rn} |f(x){g}(x) |\di x %, %\{f_j\}\in L_p^r(\ell_q,\rn),
\end{\eq*}
where the supremum is taken over all $\left\|g|\Lmwe\right\|\leq 1$ and $f\in L_1^{\mathrm{loc}}(\rn)$. 
\item
The \textit{associated space of }$\Hrdw$ is given by the norm 
\begin{\eq*} %\label{1:GHM}
  \norm{f}{\Hrdw^{\$}}\equiv\sup_{g}\int_{\rn} |f(x){g}(x) |\di x %, %\{f_j\}\in L_p^r(\ell_q,\rn),
\end{\eq*}
where the supremum is taken over all $\left\|g|\Hrdw\right\|\leq 1$ and $f\in L_1^{\mathrm{loc}}(\rn)$. 
\end{enumerate} 
\end{Def}
We recover results of \cite{MST16}.
\begin{Corol}
Let $1<p<\infty$, $-\frac{n}{p}< r<0$, $r+\vr=-n$ and let $w\in A_p$.
Then 
\[
  \norm{f}{\Lmwe^{\$}} \cong \norm{f}{\Hrdw}
	\text{ and }
	\norm{f}{\Hrdw^{\$}} \cong \norm{f}{\Lmwe}
\]
where $f\in L_1^{\mathrm{loc}}(\rn)$. 
\end{Corol}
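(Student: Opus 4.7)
The strategy is to combine the generalized H\"older inequality \eqref{58:GGG} with the duality Theorems \ref{TDp1:GG} and \ref{TDp2:GG}. Both equivalences share a common architecture: one direction is immediate from \eqref{58:GGG}, while the reverse is obtained by turning a finite associated-space norm into a bounded linear functional on the corresponding predual and identifying its representing density with $f$ itself.

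For $\norm{f}{\Hrdw^{\$}} \cong \norm{f}{\Lmwe}$, the estimate $\norm{f}{\Hrdw^{\$}} \le \norm{f}{\Lmwe}$ follows by taking the supremum over $g \in \Hrdw$ with $\norm{g}{\Hrdw} \le 1$ in \eqref{58:GGG}. For the converse, suppose $f \in L_1^{\mathrm{loc}}(\rn)$ satisfies $\norm{f}{\Hrdw^{\$}} < \infty$ and define
\[
  L_f(g) \equiv \int_{\rn} f(x) g(x) \di x, \qquad g \in D(\rn).
\]
The trivial bound $|L_f(g)| \le \int_{\rn} |f g| \di x \le \norm{f}{\Hrdw^{\$}} \norm{g}{\Hrdw}$, combined with the density of $D(\rn)$ in $\Hrdw$ (Proposition \ref{T3.1}), produces a unique bounded extension $L_f \in (\Hrdw)'$ satisfying $\|L_f\| \le \norm{f}{\Hrdw^{\$}}$. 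Theorem \ref{TDp1:GG} then furnishes a unique $\wt{f} \in \Lmwe$ with $\norm{\wt{f}}{\Lmwe} \cong \|L_f\|$ such that $L_f(g) = \int_{\rn} \wt{f}(x) g(x) \di x$ for every $g \in D(\rn)$. Testing $f - \wt{f}$ against all $g \in D(\rn)$ and invoking the fundamental lemma of the calculus of variations gives $f = \wt{f}$ almost everywhere, whence $\norm{f}{\Lmwe} \le c\, \norm{f}{\Hrdw^{\$}}$.

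The second equivalence $\norm{f}{\Lmwe^{\$}} \cong \norm{f}{\Hrdw}$ is handled by the same scheme with the roles of the spaces interchanged: \eqref{58:GGG} delivers $\norm{f}{\Lmwe^{\$}} \le \norm{f}{\Hrdw}$, and the hypothesis $\norm{f}{\Lmwe^{\$}} < \infty$ makes $L_f$ bounded on $D(\rn)$ with respect to $\norm{\cdot}{\Lmwe}$, hence extendable to a unique element of $(\Lciw)'$ by the density of $D(\rn)$ in $\Lciw$ (Definition \ref{24:GG}). Theorem \ref{TDp2:GG} then supplies $\wt{f} \in \Hrdw$ representing $L_f$ on $D(\rn)$ with $\norm{\wt{f}}{\Hrdw} \cong \|L_f\| \le \norm{f}{\Lmwe^{\$}}$, and the fundamental lemma once more identifies $f$ with $\wt{f}$ almost everywhere. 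The only delicate point is the identification step itself: because the pairings $\int fg\,\di x$ used in the definition of the associated spaces coincide with the pairings appearing in the duality formulas \eqref{87:GG} and \eqref{represDual}, the density produced by Theorems \ref{TDp1:GG} and \ref{TDp2:GG} automatically represents the same functional as $f$ on the dense subclass $D(\rn)$, and no further matching of representations is required.
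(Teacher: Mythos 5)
Your proposal is correct and follows essentially the same route as the paper: the generalized H\"older inequality \eqref{58:GGG} gives the easy inequalities, and the converse inequalities come from viewing $g\mapsto\int fg\,\di x$ as a bounded functional on the relevant (pre)dual space and invoking the representation Theorems \ref{TDp1:GG} and \ref{TDp2:GG}. The only difference is cosmetic: you spell out the identification $f=\wt f$ a.e.\ via the fundamental lemma of the calculus of variations, a step the paper leaves implicit in the uniqueness clause of those theorems.
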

\begin{proof}
By \eqref{58:GGG} follows
\[
  \norm{f}{\Lmwe^{\$}} \le c \norm{f}{\Hrdw},
	\norm{f}{\Hrdw^{\$}} \le c \norm{f}{\Lmwe}
	.
\]
Let us assume now that $f\in \Lmwe^{\$}$. 
We observe that 
\[
\norm{f}{\Lmwe^{\$}}=\sup_{g}\int_{\rn} f(x){g}(x) \di x=\sup_{g}\left|\int_{\rn} f(x){g}(x) \di x \right|
\]
where the supremum is taken over all $\left\|g|\Lmwe\right\|\leq 1$. Thus,
\[
  \sup_{g}\left|\int_{\rn} f(x){g}(x) \di x \right|\le \norm{f}{\Lmwe^{\$}}
\] 
where the supremum is taken over all $g\in D(\rn)$ with $\left\|g|\Lmwe\right\|\leq 1$. But then the left-hand side of the latter inequality coincides with the norm of a functional on $\Lciw$. By \eqref{represDual} and \eqref{represDualNorm} we achieve
\[
  \norm{f}{\Hrdw} \le c \sup_{g}\left|\int_{\rn} f(x){g}(x) \di x \right|\le c\norm{f}{\Lmwe^{\$}}.
\] 
Using \eqref{87:GG} and \eqref{MWGH} we deduce analogously 
\[
  \norm{f}{\Lmwe}\le c\norm{f}{\Hrdw^{\$}}.
\] 
\end{proof}
\section*{Acknowledgements}
We thank Javier Duoandikoetxea (UPV/EHU) for useful hints with respect to Section 5 dealing with extrapolation in Morrey spaces. 
%\begin{acknowledgement} %\label{ackref}
%The \verb"acknowledgements" environment may be used to acknowledge
%indebtedness to colleagues, host institutions and referees. Accounts
%of grants and financial support should be made as a footnote on the
%title page using the \verb"\extraline{}" command in the preamble.
%\end{acknowledgement}
%

\end{document}